\newtheorem{thm}{Theorem}[section]
\newtheorem*{thm*}{Theorem}
\newtheorem{lem}[thm]{Lemma}
\newtheorem{prop}[thm]{Proposition}
\newtheorem{cor}[thm]{Corollary}
\theoremstyle{definition}
\newtheorem{defn}[thm]{Definition}
\newtheorem{exmp}[thm]{Example}
\newtheorem{rmk}[thm]{Remark}
\newcommand{\sgn}{\text{sgn}}
\newcommand{\RN}[1]{%
 \textup{\uppercase\expandafter{\romannumeral#1}}%
}
\newcommand{\Addresses}{{
 \bigskip
 \textsc{Jeremiah Horrocks Institute, University of Lancashire, Preston, PR1 2HE, United Kingdom}\par\nopagebreak
 \textit{E-mail address}: \texttt{jwilson30@lancashire.ac.uk}}}
 \title{Surface cluster algebra expansion formulae via loop graphs}
\author{Jon Wilson}
\date{}
\newcommand*\circled[1]{\kern-2.5em%
  \put(0,4){\color{white}\circle*{18}}\put(-0.5,4){\circle{12}}%
  \put(-3,0){\color{black}\small#1}~~}
\begin{document}

\maketitle

\begin{abstract}

In 2011 Musiker, Schiffler and Williams obtained expansion formulae for cluster algebras from orientable surfaces \cite{musiker2011positivity}. For singly and doubly notched arcs these formulae required the notion of $\gamma$-symmetric perfect matchings and $\gamma$-compatible pairs of $\gamma$-symmetric perfect matchings, respectively. We simplify and unify these approaches by considering good matchings of \textit{loop} graphs.

\end{abstract}

\pagenumbering{arabic}

\vspace{5mm}

\tableofcontents

\section{Introduction}

Fomin and Zelevinsky's cluster algebras are a particular class of commutative algebras whose generators, \textit{cluster variables}, are obtained iteratively from an initial collection of (algebraically independent) variables. These cluster variables have some seemingly miraculous properties; even though they are defined recursively as rational functions, surprisingly, they are always Laurent polynomials in the initial variables \cite{fomin2002cluster}. In the context of cluster algebras from triangulated surfaces \cite{fomin2018cluster}, a setting in which cluster variables correspond to tagged arcs on the surfaces, Musiker, Schiffler and Williams significantly strengthened this result. Namely, they showed the coefficients of these Laurent polynomials are always non-negative \cite{musiker2011positivity} -- a particularly elusive conjecture at the time. Their beautiful result established a remarkable connection between monomials of the Laurent polynomial and perfect matchings of \textit{snake} graphs. For plain arcs this connection may be thought of as a perfect fit, as there is a bijection between the monomials and the collection of all perfect matchings of the associated snake graph (with respect to an initial ideal triangulation). However, for a general tagged arc this correspondence is somehow lost; one must consider much larger graphs and restrict attention to the so called `$\gamma$-symmetric' perfect matchings' or `$\gamma$-compatible pairs' of `$\gamma$-symmetric' perfect matchings.

In this paper, in an effort to find an optimal framework for all tagged arcs, we introduce the notion of \textit{loop} graphs. Roughly speaking this is the result of gluing the end(s) of a snake graph to an existing tile(s). The following theorem shows that \textit{good} matchings of loop graphs effectively describe the cluster variable expansion of any tagged arc, with respect to any tagged triangulation. \newline

\noindent \textbf{Main Theorem} (Theorem \ref{loopexpansion}). {\textit{Let $T$ be a tagged triangulation of a bordered surface $(S,M)$. \newline Let $\mathcal{A}$ be the associated cluster algebra with principal coefficients with respect to $\Sigma_T = (\mathbf{x}_T,\mathbf{y}_T,B_T)$. \newline Then for any}\footnote{In this paper, our proof of the Main Theorem relies on the work of Musiker, Schiffler and Williams \cite{musiker2011positivity}, therefore, if $\gamma$ is a doubly notched arc and $T$ contains only plain arcs, or $\gamma$ is a plain arc and $T$ contains only doubly notched arcs, then we must assume $(S,M)$ is not a twice punctured closed surface. For more information on this technical assumption, we refer the reader to [Theorem 4.20, \cite{musiker2011positivity}].
Note that by the pioneering work of Gross, Hacking, Keel, Kontsevich \cite{gross2018canonical}, or Lee, Schiffler \cite{lee2015positivity}, the Laurent expansion is known to have non-negative coefficients; the debate is to whether equation (\ref{main theorem}) holds. That said, the techniques developed in \cite{geiss2023bangle} can be used to verify this case too.} tagged arc $\gamma$ of $(S,M)$, the Laurent expansion of $x_{\gamma} \in \mathcal{A}$ with respect to $\Sigma_T$ is given by:

\begin{equation}
\label{main theorem}
x_{\gamma} = \frac{1}{\text{cross}(\gamma, T)}\sum_{P} x(P)y(P),
\end{equation}

\noindent \textit{where the sum is taken over all good matchings $P$ of the loop graph $G_{\gamma,T}$.} \newline

In general, the reader may want to obtain the Laurent expansion of $x_{\gamma}$ with respect to an arbitrary choice of coefficients, so our assumption of principal coefficients in the Main Theorem may seem quite restrictive. However, by the `separation of additions' formula of Fomin and Zelevinsky \cite{fomin2007cluster}, it is enough to consider only principal coefficients. Indeed, if $z$ is a cluster variable in a cluster algebra (with any choice of coefficients), their result states $z$ is obtained by a certain specialisation (and rescaling) of the corresponding cluster variable in the cluster algebra with principal coefficients. \newline

We believe the Main Theorem will be particularly useful in obtaining skein relations between (generalised) tagged arcs and closed curves on $(S,M)$\footnote{Since the first pre-print of this paper was released, skein relations have been obtained by two parties. Their independent approaches followed Section \ref{latticesection} in different directions: Tsironis's proof employed a representation theoretic approach \cite{tsironis2025skein}, building on Remark \ref{repremark}; whereas Banaian, Kang, Kelley opted to generalise \textit{snake graph calculus} to loop graphs \cite{banaian2024skein} by harvesting the poset structure outlined in Lemma \ref{posetlemma} and Theorem \ref{loop order lattice}.}. As a direct consequence one could then use this to obtain bases for all (full rank) cluster algebras arising from surfaces [Appendix A, \cite{musiker2013bases}]. \newline \indent

\indent The paper is organised as follows. Section 2 gives a very brief overview of cluster algebras and their relation to triangulated surfaces. In Section 3 we recall the construction of snake graphs, and introduce a like-minded generalisation called \textit{loop graphs}. In Section 4 we then show how one can associate these loop graphs to tagged arcs, with respect to ideal triangulations. The main result of the paper is given in Section 5, which shows that one can compute cluster variable expansions via good matchings of loop graphs, for any surface cluster algebra. The proof of this result is given in Section 6, which revolves around reinterpreting statements about `$\gamma$-symmetric' perfect matchings and `$\gamma$-compatible pairs' of `$\gamma$-symmetric' perfect matchings, as statements about good matchings of loop graphs. Finally, in Section 7 we show the collection of good matchings of any (surface) loop graph can be naturally endowed with a lattice structure.

\section*{\large \centering Acknowledgements}

The author is grateful for the generous support they received from Christof Geiss' CONACyT-239255 grant, Daniel Labardini-Fragoso's grants: CONACyT-238754 and C\'{a}tedra Marcos Moshinsky, and their postdoctoral fellowship at IMUNAM. The author also thanks Anna Felikson and Ezgi Kantarcı Oğuz for helpful comments which improved the readability of the text. Finally, I thank the anonymous referees for their careful reading and their many valuable comments.

\section{Preliminaries}
\label{preliminariessection}

\subsection{Cluster algebras}

This section provides a brief review of (skew-symmetric) cluster algebras of geometric type. Let $n \leq m$ be positive integers. Furthermore, let $\mathcal{F}$ be the field of rational functions in $m$ independent variables. Fix a collection $X_1,\ldots,X_n, x_{n+1},\ldots, x_m$ of algebraically independent variables in $\mathcal{F}$. We define the \textit{coefficient ring} to be $\mathbb{ZP}:=\mathbb{Z}[x_{n+1}\ldots x_m]$.

\begin{defn}
A \textit{\textbf{(labelled) seed}} consists of a pair, $(\mathbf{x},\mathbf{y}, B)$, where

\begin{itemize}[leftmargin=6.8mm]

\item $\mathbf{x} = (x_1,\ldots x_n)$ is a collection of variables in $\mathcal{F}$ which are algebraically independent over $\mathbb{ZP}$,

\item $\mathbf{y} = (y_1,\ldots y_n)$ where $y_k = \displaystyle \prod_{j=n+1}^{m} x_j^{b_{jk}}$ for some $b_{jk} \in \mathbb{Z}$,

\item $B = (b_{jk})_{j,k \in \{1,\ldots, n\}}$ is an $n \times n$ skew-symmetric integer matrix.
\end{itemize}

The variables in any seed are called \textit{\textbf{cluster variables}}. The variables $x_{n+1},\ldots,x_m$ are called \textit{\textbf{frozen variables}}. We refer to $\mathbf{y}$ as the \textbf{\textit{choice of coefficients}}.
Furthermore, following the literature, we define $\mathbf{\hat{y}} := (\hat{y}_1,\ldots \hat{y}_n)$ where $\hat{y}_k := y_k\displaystyle \prod_{j \in [1,n]} x_{j}^{b_{jk}}$ for each $k \in [1,n]$.

\end{defn}

\begin{defn}

Let $(\mathbf{x},\mathbf{y},B)$ be a seed and let $i \in \{1,\ldots,n\}$. 

We define a new seed $\mu_{i}(\mathbf{x},\mathbf{y},B) := (\mathbf{x}',\mathbf{y}',B')$, called the \textit{\textbf{mutation}} of $(\mathbf{x},\mathbf{y},B)$ at $i$ where:

\begin{itemize}[leftmargin=6.8mm]

\item $\mathbf{x}' = (x'_1,\ldots x'_n)$ is defined by 

$$x'_i = \frac{\displaystyle \prod_{b_{ki} >0} x_k^{b_{ki}} + \prod_{b_{ki} <0} x_k^{-b_{ki}}}{x_i}$$

and setting $x_j' = x_j$ when $j \neq i$;

\item $\mathbf{y}'$ and $B' = (b'_{jk})$ are defined by the following rule:

\[   
b'_{jk} = 
     \begin{cases}
       -b_{jk},& \text{if $j=i$ or $k=i$,}\\
       b_{jk} + \max(0,-b_{ji})b_{ik} + \max(0,b_{ik})b_{ji},& \text{otherwise.}\\
     \end{cases}
\]

\end{itemize}

\end{defn}

\begin{defn}
\label{cluster algebra}
Fix an \textit{initial seed} $ (\mathbf{x},\mathbf{y}, B)$. If we label the \textit{initial cluster variables} of $ \mathbf{x}$ from $1,\ldots, n $ then we may consider the labelled n-regular tree $\mathbb{T}_n $. Each vertex in $ \mathbb{T}_n $ has $ n$ incident edges labelled $1,\ldots,n $. Vertices of $ \mathbb{T}_n $ represent seeds and the edges correspond to mutation. In particular, the label of the edge indicates which direction the seed is being mutated in. 

Let $\mathcal{X} $ be the set of all cluster variables appearing in the seeds of $ \mathbb{T}_n$. The \textit{\textbf{cluster algebra}} of the seed $ (\mathbf{x},\mathbf{y}, B) $ is defined as $ \mathcal{A}(\mathbf{x},\mathbf{y}, B ) := \mathbb{ZP}[\mathcal{X}]$. \newline \indent We say $\mathcal{A}(\mathbf{x},\mathbf{y}, B)$ is the \textbf{\textit{cluster algebra with principal coefficients}} if $m = 2n$ and $\mathbf{y} = (y_1, \ldots, y_n)$ satisfies $y_k = x_{n+k}$ for any $k \in \{1,\ldots, n\}$.

\end{defn}

Cluster algebras with principal coefficients admit a natural $\mathbb{Z}^n$-grading which we describe in the following proposition.

\begin{prop}[Proposition 6.1, Corollary 6.2, \cite{fomin2007cluster}]

Let $\mathcal{A}(\mathbf{x},\mathbf{y}, B)$ be a cluster algebra with principal coefficients. Then every cluster variable of $\mathcal{A}(\mathbf{x},\mathbf{y}, B) \subseteq \mathbb{Z}[x_1^{\pm 1},\ldots, x_n^{\pm 1}, y_1, \ldots, y_n ]$ is homogeneous with respect to the $\mathbb{Z}^n$-grading determined by setting: $$\text{deg}(x_i) := e_i \hspace{7mm} \text{and} \hspace{7mm} \text{deg}(y_i) = \text{deg}(x_{n+i}) := -\sum_{k=1}^{n} b_{ki}e_k.$$

\noindent for each $i \in \{1,\ldots, n\}$, where $e_1,\ldots, e_n$ denote the standard basis vectors of $\mathbb{Z}^n$. \newline

Moreover, with respect to this grading, for each cluster variable $X \in \mathcal{A}(\mathbf{x},\mathbf{y}, B)$ we define the $\textbf{g-vector}$ of $X$ as: $$\textbf{g}(X) = (g_1,\ldots, g_n) := \text{deg}(X) \in \mathbb{Z}^n.$$

\end{prop}

\subsection{Cluster algebras from surfaces}
\label{Cluster algebras from surfaces}

In this subsection we recall the work of Fomin, Shapiro and Thurston \cite{fomin2008cluster}, which establishes a cluster structure for triangulated orientable surfaces. \newline \indent

Let $S$ be a compact orientable $2$-dimensional manifold. Fix a finite set $M$ of marked points of $S$ such that each boundary component contains at least one marked point -- we refer to marked points in the interior of $S$ as \textit{punctures}. The pair $(S,M)$ is called a \textit{\textbf{bordered surface}}. For technical reasons we exclude the cases where $(S,M)$ is an unpunctured or once-punctured monogon; a digon; a triangle; or a once, twice or thrice punctured sphere.

\begin{defn}

An \textit{\textbf{arc}} of $(S,M)$ is a simple curve in $S$ connecting two marked points of $M$, which is not isotopic to a boundary segment or a marked point.
\end{defn}

\begin{defn}

A \textit{\textbf{tagged arc}} $\gamma$ is an arc whose endpoints have been `tagged' in one of two ways; \textit{\textbf{plain}} or \textit{\textbf{notched}} (see Figure \ref{TaggedArcs}). Moreover, this tagging must satisfy the following conditions: if the endpoints of $\gamma$ share a common marked point, they must receive the same tagging; and an endpoint of $\gamma$ lying on the boundary $\partial S$ must always receive a plain tagging. In this paper we shall always consider tagged arcs up to isotopy.

\end{defn}

\begin{figure}[H]
\begin{center}
\includegraphics[width=10cm]{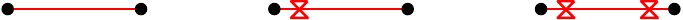}
\caption{Illustrations of a plain arc; a singly notched arc; and a doubly notched arc, respectively.}
\label{TaggedArcs}
\end{center}
\end{figure}

\begin{defn}

Let $\alpha$ and $\beta$ be two tagged arcs of $(S,M)$. We say $\alpha$ and $\beta$ are \textit{\textbf{compatible}} \textit{if and only if} the following conditions are satisfied:

\begin{itemize}
\item There exist isotopic representatives of $\alpha $ and $\beta$ that don't intersect in the interior of $S$. 

\item Suppose the untagged versions of $ \alpha$ and $\beta $ do not coincide. If $ \alpha$ and $\beta$ share an endpoint $p$ then the ends of $\alpha $ and $\beta $ at $p $ must be tagged in the same way.

\item Suppose the untagged versions of $\alpha $ and $\beta $ do coincide. Then precisely one end of $ \alpha $ must be tagged in the same way as the corresponding end of $\beta$.

\end{itemize}

A \textbf{\textit{tagged triangulation}} of $(S,M) $ is a maximal collection of pairwise compatible tagged arcs of $(S,M) $. Moreover, this collection is forbidden to contain any tagged arc that enclose a once-punctured monogon.

An \textit{\textbf{ideal triangulation}} of $ (S,M)$ is a maximal collection of pairwise compatible plain arcs. Note that ideal triangulations decompose $(S,M)$ into triangles, however, the sides of these triangles may not be distinct; two sides of the same triangle may be glued together, resulting in a \textit{\textbf{self-folded triangle}} -- the self-folded edge of such a triangle is called a \textit{\textbf{radius}} (see Figure \ref{Triangulations}).
\end{defn}

\newpage

\begin{rmk}

To each tagged triangulation $T$ we may uniquely assign an ideal triangulation $T^{\circ}$ as follows:

\begin{itemize}

\item  If $p$ is a puncture with more than one incident notch, then replace all these notches with plain taggings.

\item  If $ p$ is a puncture with precisely one incident notch, and this notch belongs to $ \beta \in T$, then replace $\beta $ with the unique arc $\gamma $ of $(S,M)$ which encloses $\beta $ and $p $ in a monogon.

\end{itemize}

Conversely, to each ideal triangulation $T $ we may uniquely assign a tagged triangulation $ \iota(T)$ by reversing the second procedure described above.
\end{rmk}

\begin{figure}[H]
\begin{center}
\includegraphics[width=13cm]{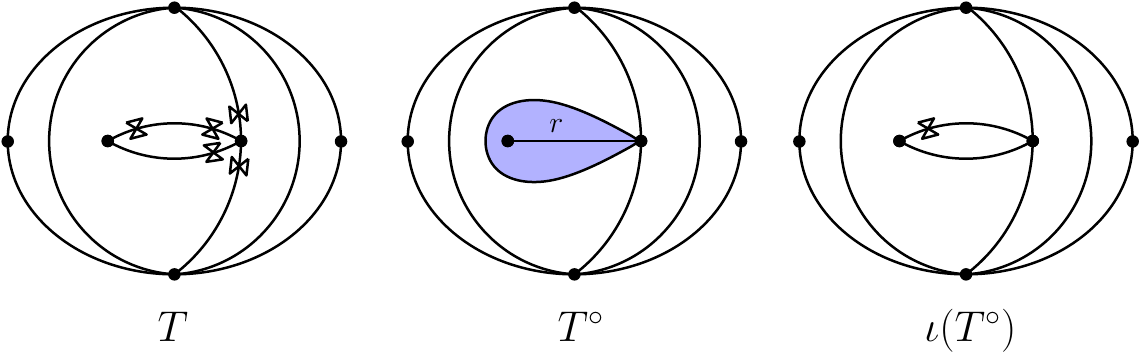}
\caption{An illustration of a tagged triangulation $T$, its corresponding ideal triangulation $T^{\circ}$, and the associated tagged triangulation $\iota(T^{\circ})$. The shaded triangle in $T^{\circ}$ is an example of a self-folded triangle with radius $r$.}
\label{Triangulations}
\end{center}
\end{figure}

\begin{defn}
\label{adjacency matrix}
Let $T$ be a tagged triangulation, and consider its associated ideal triangulation $ T^{\circ}$. We may label the arcs of $T^{\circ} $ from $ 1,\ldots, n$ (note this also induces a natural labelling of the arcs in $ T$). We define a function, $ \pi_{T} : \{1,\ldots,n\} \rightarrow \{1,\ldots,n\} $, on this labelling as follows:
\[   
\pi_{T}(i) = 
     \begin{cases}
       j & \text{if $ i $ is the glued side of a self-folded triangle in $ T^{\circ}$, and $j$ is the remaining side;}\\
       i & \text{otherwise.}\\
     \end{cases}
\]

For each non-self-folded triangle $ \Delta$ in $T ^{\circ} $, as an intermediary step, define the matrix $ B_T^{\Delta} = (b^{\Delta}_{jk}) $ by setting \[   
b^{\Delta}_{jk} = 
     \begin{cases}
       1 & \text{if $\Delta$ has sides $ \pi_{T}(j)$ and $ \pi_{T}(k)$, and $ \pi_{T}(k) $ follows $ \pi_{T}(j) $ in a clockwise order;}\\
       -1 & \text{if $ \Delta$ has sides $ \pi_{T}(j)$ and $\pi_{T}(k) $, and $ \pi_{T}(k) $ follows $\pi_{T}(j)$ in an  anti-clockwise order;}\\
       0 & \text{otherwise}\\
     \end{cases}
\]

The \textit{\textbf{adjacency matrix}} $ B_T = (b_{ij}) $ of $ T $ is then defined to be the following summation, taken over all non-self-folded triangles $ \Delta$ in $ T^{\circ}$:
$$B_T:=  \sum\limits_{\Delta} B_T^{\Delta} $$

\end{defn}

\begin{defn}

Let $T $ be a tagged triangulation of a bordered surface $ (S,M)$. Consider the initial seed $ (\mathbf{x},\mathbf{y},B_T) $, where: $ \mathbf{x} $ contains a cluster variable for each tagged arc in $T $; $B_T $ is the matrix defined in Definition \ref{adjacency matrix}; and $ \mathbf{y}$ is any choice of coefficients. We call $ \mathcal{A}(\mathbf{x}, \mathbf{y}, B_T)$ a \textbf{\textit{surface cluster algebra}}. 

\end{defn}

\begin{prop}[Theorem 7.9, \cite{fomin2008cluster}]

Let $T $ be a tagged triangulation of a bordered surface $(S,M)$. Then for any $\gamma \in T$ there exists a unique tagged arc $\gamma'$ of $(S,M)$ such that $\mu_{\gamma}(T) := T\setminus \{\gamma\} \cup \{\gamma'\}$ is a tagged triangulation. We call $\mu_{\gamma}(T)$ the \textbf{\textit{flip}} of $T$ with respect to $\gamma$.

\end{prop}

\begin{thm}[Theorem 6.1, \cite{fomin2018cluster}]
\label{surface correspondence}
Let $(S,M) $ be a bordered surface. If $ (S,M)$ is not a once punctured closed surface, then in the cluster algebra $\mathcal{A}(\mathbf{x}, \mathbf{y}, B_T)$, the following correspondence holds:
\begin{align*}
 &\hspace{23mm} \mathbf{\mathcal{A}(\mathbf{x}, \mathbf{y}, \text{$B_T$})} & &  &\mathbf{(S,M)} \hspace{26mm}&   \\ 
 &\hspace{18mm} \textit{Cluster variables}  &\longleftrightarrow&  &\textit{Tagged arcs} \hspace{23mm} &  \\
 &\hspace{25mm}\textit{Clusters}   &\longleftrightarrow&  &\textit{Tagged triangulations} \hspace{14mm}&  \\
 &\hspace{24mm} \textit{Mutation}    &\longleftrightarrow&   &\textit{Flips of tagged arcs} \hspace{16mm}&  \\
\end{align*}

When $(S,M)$ is a once-punctured closed surface then cluster variables are canonically in bijection with all plain arcs or all doubly notched arcs depending on whether $T$ consists solely of plain arcs or doubly notched arcs, respectively.
\end{thm}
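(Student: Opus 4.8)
The plan is to realise both sides of the claimed correspondence as quotients of the $n$-regular tree $\mathbb{T}_n$ and to show that these two quotients coincide. I would fix the ideal triangulation $T^\circ$ with its tagged triangulation $T = \iota(T^\circ)$ as a base point, giving the initial seed $(\mathbf{x}_T,\mathbf{y}_T,B_T)$. The argument then rests on three ingredients: (i) a \emph{local} statement that a single flip of a tagged arc induces a single matrix mutation; (ii) a \emph{topological} statement that tagged triangulations form a connected, $n$-regular flip graph; and (iii) a \emph{global injectivity} statement guaranteeing that distinct tagged arcs yield distinct cluster variables. Granting (i)--(iii), the correspondence is assembled as follows. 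Propagating along the edges of $\mathbb{T}_n$ from the base point produces, at each vertex, both a tagged triangulation (via flips) and a seed (via mutations), and by (i) these are compatible. Since the same tagged triangulation has the same matrix $B_T$ and the same arc-labelled cluster $\mathbf{x}_T$, the assignment descends to a well-defined map from tagged triangulations to clusters; by (ii) this map is surjective, and by (iii) it is injective, yielding simultaneously the bijections tagged arcs $\leftrightarrow$ cluster variables and tagged triangulations $\leftrightarrow$ clusters, with flips $\leftrightarrow$ mutations.

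For (i) I would argue entirely locally. A flip replaces a single arc $\gamma$ labelled $k$ inside the (at most two) triangles bounding it by the unique other diagonal, while nothing changes away from these triangles. Comparing the triangle matrices $B_T^{\Delta}$ of Definition \ref{adjacency matrix} before and after the flip, one checks by a finite enumeration of local configurations that the only entries that change are those in row and column $k$, and that they change exactly according to the mutation rule in the second defining bullet for seed mutation. The configurations involving self-folded triangles must be treated separately, but the function $\pi_T$ and the passage between tagged and ideal triangulations described in the Remark above are precisely engineered so that flipping a notched arc, or a folded side, still reads off as the stated mutation. This step is a direct computation rather than an obstacle. For (ii) I would cite the topological fact that each tagged arc of a tagged triangulation can be flipped to a unique replacement (so the flip graph is $n$-regular) and that any two tagged triangulations are joined by a finite sequence of flips.

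The substance lies in (iii). Here I would introduce a \emph{denominator vector}: expressing $x_\gamma$ as a Laurent polynomial in $\mathbf{x}_T$ (legitimate by the Laurent phenomenon), its unique reduced-form denominator is $\prod_i x_i^{d_i}$, where $d_i$ records the geometric intersection number of $\gamma$ with the arc $\tau_i \in T^\circ$, under the appropriate conventions at punctures and self-folded triangles. The topological fact that a tagged arc is determined by its vector of intersection numbers with a fixed triangulation then forces distinct arcs to have distinct denominators, hence distinct cluster variables; and since a cluster is exactly the set of variables attached to the arcs of a triangulation, this shows a cluster determines its triangulation, which is the injectivity needed. I expect establishing this exact denominator-to-intersection dictionary to be the main obstacle, as it requires either the explicit expansion formulae or a careful induction along flips that tracks denominators through each mutation.

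Finally, the once-punctured closed surface must be handled as a genuine exception, which is why it is excluded from the main assertion. There the tagged arc complex is disconnected: reversing every tag is a nontrivial symmetry exchanging two isomorphic ``all-plain'' and ``all-notched'' components of the flip graph, whereas the exchange graph of $\mathcal{A}(\mathbf{x},\mathbf{y},B_T)$ is connected. I would resolve this by showing that the triangulation-to-cluster map restricts to a bijection on the single component containing $T$, so that cluster variables biject with the plain arcs or with the notched arcs according to the tagging of $T$, exactly as stated. Apart from this case and the self-folded-triangle bookkeeping in (i), the argument follows the clean scheme: flips equal mutations, and connectivity together with denominator control upgrades the local correspondence to a global bijection.
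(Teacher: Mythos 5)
You should first note a mismatch of expectations: the paper does not prove this theorem at all — it is imported verbatim as [Theorem 6.1, \cite{fomin2018cluster}] as background for Section \ref{Cluster algebras from surfaces} — so there is no in-paper proof to compare against, and your outline can only be measured against the proof in the cited literature. Measured that way, your skeleton is the right one: ingredients (i) flip $=$ mutation as a local computation (with $\pi_T$ handling the self-folded configurations), (ii) connectivity and $n$-regularity of the tagged flip graph, and (iii) global injectivity, assembled by propagating from a base seed along $\mathbb{T}_n$, is exactly the architecture of Fomin--Shapiro--Thurston's original approach, and (i) and (ii) are indeed as routine as you claim. Your treatment of the once-punctured closed exception — the tag-reversing involution exchanging two isomorphic components of the tagged arc complex, with the exchange graph realizing only the component of $T$ — is also the correct picture and matches the final sentence of the statement.

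The genuine gap is in (iii), and you only half-acknowledge it. The dictionary ``reduced-form denominator of $x_\gamma$ $=$ intersection numbers of $\gamma$ with $T^\circ$'' is not obtainable by ``a careful induction along flips'': denominator vectors do not satisfy a clean recursion under mutation in the punctured tagged setting, the entries are $-1$ for arcs of the initial triangulation, and the conventions needed at punctures and self-folded triangles (the modified intersection pairing of Fomin--Shapiro--Thurston) are delicate — the naive identification of $d$-vectors with geometric intersection numbers requires corrections for certain notched arcs. Worse, the honest way to compute these denominators is through explicit expansion formulae such as those of \cite{musiker2011positivity}, which \emph{presuppose} Theorem \ref{surface correspondence}: one needs the correspondence to even attach a cluster variable $x_\gamma$ to a tagged arc before expanding it, so within the logical order of this paper that route is circular. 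This is precisely why the cited source proves the theorem by a different mechanism: Fomin and Thurston construct a geometric realization of the cluster algebra in which cluster variables are lambda lengths of tagged arcs on decorated Teichm\"uller space, and injectivity of the map from tagged arcs to cluster variables follows because distinct tagged arcs yield distinct lambda-length functions. Your plan has the right shape, but as written step (iii) rests on an unproved (and, in its naive form, false) denominator dictionary; to close it you would need either the lambda-length realization or an independent, flip-free proof of the $d$-vector statement.
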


\section{Snake and loop graphs}

In this section we first recall the notion of an (abstract) \textit{snake} graph, as seen in \cite{canakci2013snake}, \cite{canakci2015snake}. Following this, we define \textit{loop} graphs which are a generalisation of Musiker, Schiffler and Williams' band graphs \cite{musiker2013bases}.

\subsection{Snake graphs}

\begin{defn}

A \textbf{\textit{tile}} is a connected graph comprising of four vertices and four edges, where each vertex has degree two. \newline 
\indent We shall always embed this graph in the plane, viewing a tile as a square whose edges are parallel to the $ x $ and $ y $ axes. With respect to this embedding we label the edges North (N), East (E), South (S) and West (W), as shown in Figure \ref{tile}. \newline Throughout this paper, the \textit{\textbf{diagonal}} of a tile is the line connecting the North-West and South-East vertices. Note that the diagonal is not considered an edge of the tile.

\begin{figure}[H]
\begin{center}
\includegraphics[width=3cm]{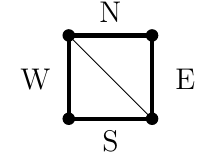}
\caption{A tile with North, East, South and West edge labellings. The associated diagonal is illustrated by the thin line.}
\label{tile}
\end{center}
\end{figure}

\end{defn}

Two tiles are said to be \textbf{\textit{glued}} if they share a common edge. The graphs considered in this paper will all be obtained via the process of gluing tiles, and the following definition gives the basic recipe of this.

\begin{defn}

A \textbf{\textit{snake graph}}, $ G = (G_1, \ldots, G_d) $, is a sequence of tiles $ G_1, \ldots, G_d $ such that the following holds for each $ i \in \{1,\ldots, d-1\} $:

\begin{itemize}

\item the North or East edge of $ G_i $ is glued to the South or West edge of $ G_{i+1 } $,

\item $ G_i $ and $ G_{i+1 } $ are glued at precisely one edge.

\end{itemize}

A subsequence of consecutive tiles $ G_i, G_{i+1}, \ldots, G_{j-1}, G_j $ occurring in $ G $ is called a \textbf{\textit{sub (snake) graph}} of $G $.

\end{defn}

\begin{defn}

Let $ G = (G_1,  \ldots , G_d) $ be a snake graph.

\begin{itemize} 

\item If the North (resp. East) edge of $ G_i $ is glued to the South (resp. West) edge of $ G_{i+1} $ for every $ i \in \{1,\ldots, n-1\} $, then $ G $ is said to be \textbf{\textit{straight}}.

\item $ G$ is said to be \textbf{\textit{zig-zag}} if no three consecutive tiles $ G_i, G_{i+1}, G_{i+2} $ of $ G$ form a straight sub snake graph.

\end{itemize}

\end{defn}

\begin{figure}[H]
\begin{center}
\includegraphics[width=9cm]{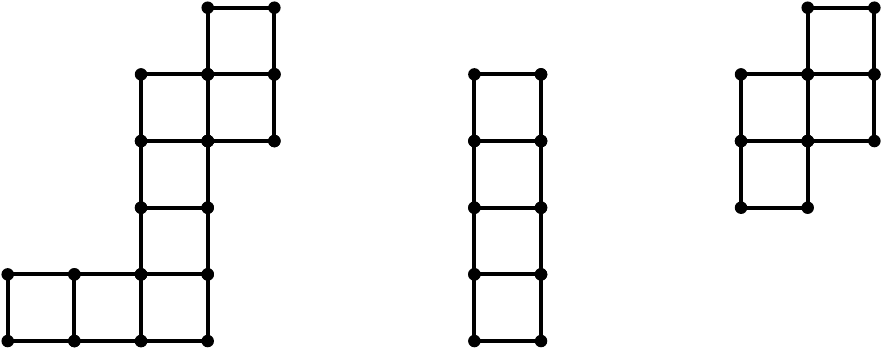}
\caption{On the left we show a general snake graph $G$. The two snake graphs on the right are straight and zig-zag sub snake graphs of $G$, respectively.}
\label{Fig_snakegraph}
\end{center}
\end{figure}

\begin{defn}

We define a \textbf{\textit{sign function}} on a snake graph $G$ to be a function, \sgn, from the set of edges of $G$ to $\{+,-\}$ such that, for each tile $G_i$ of $G$, we have: $\sgn(N(G_i)) = \sgn(W(G_i))$, $\sgn(S(G_i)) = \sgn(E(G_i))$, and $\sgn(N(G_i))  \neq \sgn(S(G_i)).$ See Figure \ref{snakegraphshade}.

\end{defn}

\begin{figure}[H]
\begin{center}
\includegraphics[width=6cm]{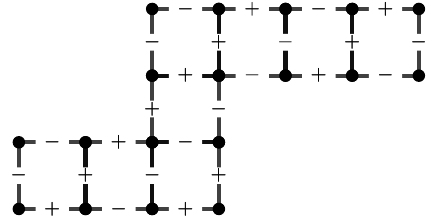}
\caption{An example of a sgn function on a snake graph. Interchanging `$+$' and `$-$' yields the other valid sgn function on the snake graph.}
\label{snakegraphshade}
\end{center}
\end{figure}

\begin{defn}
 
A \textit{\textbf{perfect matching}} of a graph $ G $ is a collection of edges of $ G $ such that every vertex of $ G $ is contained in exactly one of these edges.
 
\end{defn}

\begin{prop}[Section 4.3, \cite{musiker2010cluster}]
\label{induceddef}

Any perfect matching $P$ of a snake graph $G = (G_1,\ldots, G_d)$ induces an orientation on the diagonal of each tile $G_i$.\newline \indent Specifically, this is induced by travelling from the south-west vertex of $G_1$ to the north-east vertex of $G_d$ by alternating travel along edges in $P$ and diagonals of $G$. Following this procedure, each diagonal of $G$ (and each edge in $P$) will be traversed precisely once.

\end{prop}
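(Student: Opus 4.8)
The plan is to encode the claimed traversal as a single combinatorial object and then verify that it behaves as asserted. Write $\delta_i$ for the diagonal of the tile $G_i$, let $D=\{\delta_1,\dots,\delta_d\}$, and form the auxiliary graph $H$ on the vertex set of $\mathcal{G}$ whose edge set is $P\cup D$. Note that the diagonal of a tile joins its north-west and south-east corners, so the two extreme corners of the staircase, namely the south-west vertex of $G_1$ and the north-east vertex of $G_d$, are the unique corners meeting no diagonal. The whole statement then reduces to showing that $H$ is a single simple path with these two vertices as its endpoints: granting the degree information below, such a path automatically alternates between $P$-edges and diagonals (since $P$ and $D$ are disjoint edge sets), and reading it off from the south-west vertex of $G_1$ both realises the asserted walk and orients each diagonal by the direction in which the path crosses it, using every diagonal and every edge of $P$ exactly once.

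First I would carry out a degree count in $H$. Because $P$ is a perfect matching, every vertex of $\mathcal{G}$ meets exactly one $P$-edge. The two outer corners identified above are corners of no other tile and endpoints of no diagonal, so each has degree $1$ in $H$. For every remaining vertex $v$ I claim that $v$ is a diagonal-endpoint of exactly one tile: since the gluing rule forces the tiles to occupy a monotone staircase, no vertex can be simultaneously a north-west corner of two tiles, a south-east corner of two tiles, or a north-west corner of one tile and a south-east corner of another (the last possibility would place one tile strictly to the lower-right of the other, contradicting monotonicity). Thus $v$ has exactly one incident diagonal and hence degree $2$, with one $P$-edge and one diagonal. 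Consequently $H$ is a disjoint union of simple paths and cycles; as its only degree-$1$ vertices are the south-west vertex of $G_1$ and the north-east vertex of $G_d$, there is exactly one path, running between them, while every other component is a cycle alternating $P$-edges and diagonals.

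It remains to exclude these cycles, which I regard as the crux. Since $P$ on its own is a disjoint union of edges and hence acyclic, any cycle of $H$ must contain at least one diagonal; it therefore suffices to prove that all $d$ diagonals lie on the path issuing from the south-west vertex of $G_1$. I would establish this by showing the path passes through the tiles in order, i.e. that it crosses $\delta_1,\dots,\delta_d$ in this sequence. The base step is immediate: the south-west vertex of $G_1$ is a corner of $G_1$ only, so its $P$-edge lands on a diagonal-endpoint of $G_1$ and the first diagonal crossed is $\delta_1$. For the inductive step I would track the $P$-edge leaving the far endpoint of $\delta_i$ and verify, according to whether $G_{i+1}$ is glued along the north or the east edge of $G_i$, that it delivers the walk to a corner of $G_{i+1}$ whose next diagonal is $\delta_{i+1}$; the walk can never fall back to a lower-indexed tile because $H$ has maximum degree $2$ and the path is simple. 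Once all diagonals are shown to lie on the path, no cycle survives, the path exhausts $H$, and the proposition follows. The main obstacle is precisely this monotonicity step: it is transparent from the staircase picture but requires a short, slightly tedious case analysis of the north/east gluings to pin down where each $P$-edge delivers the walk.
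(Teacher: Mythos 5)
Your argument is correct, and it is worth noting that the paper itself contains no proof to compare against: this proposition is recalled verbatim from Musiker--Schiffler [Section 4.3, \cite{musiker2010cluster}], where the path structure is essentially asserted as part of the construction, so your write-up supplies details that neither source spells out. Your reduction is the right one: with $H$ the graph on the vertices of $\mathcal{G}$ with edge set $P\cup D$, the degree count (the two extreme corners have degree $1$, every other vertex exactly one $P$-edge and one diagonal) is sound, including at the delicate bend vertices that lie in three consecutive tiles, where the corner roles are $NE(G_i)$, $SE(G_{i+1})$ or $NW(G_{i+1})$, and $SW(G_{i+2})$, still carrying exactly one diagonal. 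You are also right that the Euler count alone cannot finish the job: $|E(H)|=2d+1=|V|-1$ holds for one path plus \emph{any} number of cycles, so excluding cycles really is the crux, and your plan of showing the path crosses $\delta_1,\dots,\delta_d$ in order does close it. The one step you leave schematic --- the inductive monotonicity claim --- can be carried out uniformly, without a case split over north/east gluings: classify each vertex's unique diagonal by its corner roles, namely $NW(G_i)$ and $SE(G_i)$ carry $\delta_i$, while $SW(G_i)$ coincides with $NW(G_{i-1})$ or $SE(G_{i-1})$ and so carries $\delta_{i-1}$, and dually $NE(G_i)$ carries $\delta_{i+1}$ (or no diagonal at the extremes). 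Since every edge of $\mathcal{G}$ is a side of some tile $G_i$ joining a diagonal corner to a non-diagonal corner of $G_i$, every $P$-edge joins a vertex of diagonal index $i$ to one of index $i\pm 1$ or to an extreme; the diagonal indices along the path therefore form a $\pm 1$ walk starting at $1$ that may visit each index at most once (each diagonal appears once in $H$, and your degree-$2$/simplicity remark prevents returning to an already-exhausted vertex), which forces the sequence $1,2,\dots,d$ and leaves no diagonal available for a cycle. With that lemma inserted, your proof is complete and is, in substance, the standard argument behind the cited construction.
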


\begin{figure}[H]
\begin{center}
\includegraphics[width=7cm]{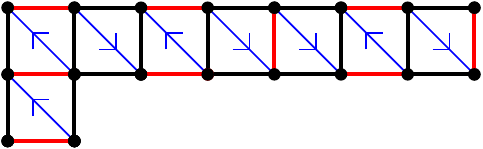}
\caption{A perfect matching $P$ of a snake graph. The orientation $P$ induces on the diagonals is shown in blue.}
\label{induceddiagonal}
\end{center}
\end{figure}

\subsection{Loop graphs}

Roughly speaking, a loop graph is obtained from a snake graph by (potentially) gluing tiles together. Just as snake graphs enable us to obtain expansion formulae for cluster variables corresponding to plain arcs \cite{musiker2011positivity}, loop graphs will provide us with the framework to write expansion formulae for the variables corresponding to any tagged arc.

\begin{defn}
\label{loopgraphrules}

Let $G = (G_1, \ldots, G_d)$ be a snake graph.

Consider $j,k \in \{1,\ldots, d\}$ where $j<k$ and let $c \in \{S(G_j),W(G_j)\}$ and $c' \in \{S(G_k),W(G_k)\}$ be boundary edges of $G$ such that $sgn(c) \neq sgn(c')$. Furthermore, let $x \in c$ denote the South-West vertex of $G_j$, and let $y$ denote the remaining vertex of $c$. Likewise, let $y' \in c'$ denote the South-West vertex of $G_k$, and let $x'$ denote the remaining vertex of $c'$. The associated \textbf{\textit{loop with respect to $G_j$ and $G_k$}} is obtained by gluing $c$ to $c'$ such that $x$ (resp. $y$) is glued to $x'$ (resp. $y'$).

We define a \textbf{\textit{loop with respect to $G_j$ and $G_k$}} where $j > k$ analogously; one should just replace South and West with North and East, respectively.

Finally, for $j,k \in \{1,\ldots, d\}$ where $j<k$, let $c \in \{S(G_j),W(G_j)\}$ and $c' \in \{N(G_k),E(G_k)\}$ be boundary edges of $G$ such that $sgn(c) = sgn(c')$. Furthermore, let $x \in c$ denote the South-West vertex of $G_j$, and let $y$ denote the remaining vertex of $c$. Likewise, let $y' \in c'$ denote the North-East vertex of $G_k$, and let $x'$ denote the remaining vertex of $c'$. The associated \textbf{\textit{band with respect to $G_j$ and $G_k$}} is obtained by gluing $c$ to $c'$ such that $x$ (resp. $y$) is glued to $x'$ (resp. $y'$).

In all scenarios above (see Figure \ref{loopbandgluing} for examples of all three), by abuse of notation, the resulting edge obtained from gluing $c$ and $c'$ is also denoted by $c$, and we call this the \textbf{\textit{cut}}.

\end{defn}

\begin{rmk}
Note that for a snake graph $G = (G_1, \ldots, G_d)$, if one considers the band with respect to tiles $G_1$ and $G_d$ then one recovers the so-called \textit{band graphs} of Musiker, Schiffler and Williams associated to (two-sided) closed curves \cite{musiker2013bases}. Moreover, if one replaces the gluing condition of $sgn(c) = sgn(c')$ with $sgn(c) \neq sgn(c')$ then one recovers the definition of band graph appearing in \cite{wilson2019positivity} associated to one-sided closed curves.
\end{rmk} \vspace{-5mm}

\begin{figure}[H]
\begin{center}
\includegraphics[width=16.5cm]{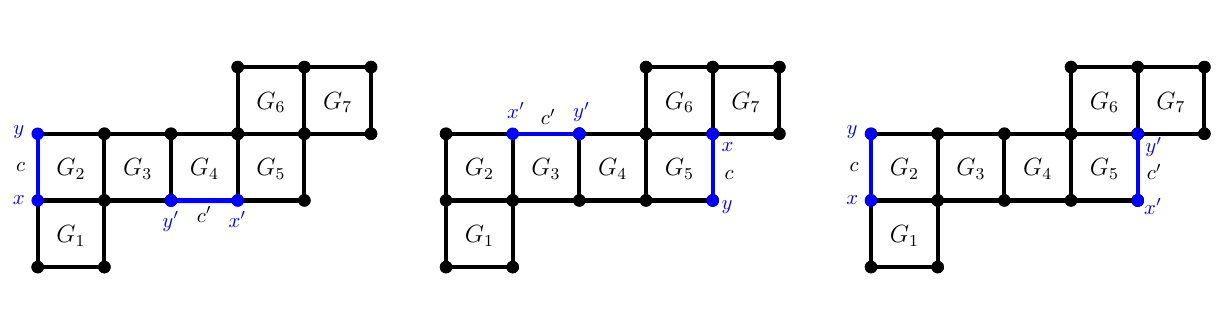}\vspace{-7mm}
\caption{We illustrate all three types of gluing: a loop with respect to tiles $G_2$ and $G_4$; a loop with respect to tiles $G_5$ and $G_3$; and a band with respect to tiles $G_2$ and $G_5$}
\label{loopbandgluing}
\end{center}
\end{figure} \vspace{-5mm}

\begin{defn}
\label{loopgraph}
A \textbf{\textit{loop graph}} $G^{\bowtie}$ is obtained from a snake graph $G = (G_1, \ldots, G_d)$ by creating a loop with respect to $G_1$ and $G_{k_1}$ where $k_1 \in \{2,\ldots, d\}$ and creating a loop with respect to $G_d$ and $G_{k_2}$ where $k_2 \in \{1,\ldots, d-1\}$ (see Figure \ref{loopgluingglued}). We extend the definition to all $k_1,k_2 \in \{1,\ldots, d\}$ by demanding there is no loop with respect to $G_1$ (resp. $G_d$) if $k_1 = 1$ (resp. $k_2 = d$). \newline
Furthermore, we allow the possibility of a band between tiles $G_2$ and $G_{d-1}$. \newline

\end{defn}

In summary, loop graphs are snake graphs for which zero, one, or two ends have been glued, and where there \textit{may} be a band between the second and penultimate tile.

\begin{figure}[H]
\begin{center}\vspace{-2mm}
\includegraphics[width=16.5cm]{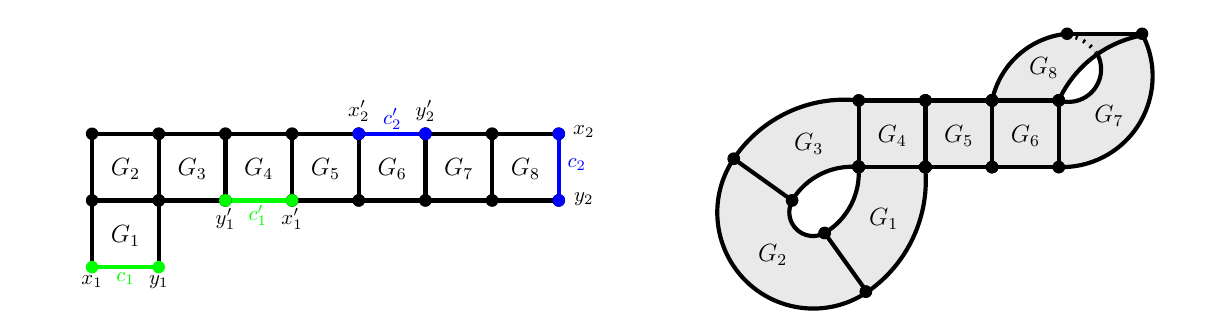}\vspace{-2mm}
\caption{Here we illustrate two visualisations of the same loop graph (which has loops at both of its ends). Throughout this paper we shall depict loop graphs as on the left.}
\label{loopgluingglued}
\end{center}
\end{figure}

\begin{defn}
\label{good matching}
 
As in Definition \ref{loopgraph}, let $G^{\bowtie}$ be a loop graph obtained from a snake graph $G$. A perfect matching $P$ of $G^{\bowtie}$ is called a \textit{good matching} if: \begin{itemize}

\item $P$ extends to a perfect matching $\overline{P}$ of $G$;

\item and for each cut $c$ of $G^{\bowtie}$ we have that $c \in \overline{P}$ or $c' \in \overline{P}$.

\end{itemize}

\end{defn}

\newpage

\begin{defn} By Definition \ref{good matching}, if $P$ is a good matching of a loop graph $G^{\bowtie}$, then it can be (uniquely) extended to a perfect matching $\overline{P}$ of the underlying snake graph $G$ (see Figure \ref{badmatching} for an example of a perfect matching of a loop graph that does not extend to a perfect matching of the underlying snake graph). For each cut $c$ of $G^{\bowtie}$ there are three possible scenarios (with respect to the good matching $P$):

\begin{itemize}

\item  $c \notin \overline{P}$ and $c' \in \overline{P}$, and we say $P$ is a \textbf{\textit{right cut}} with respect to $c$;

\item $c \in \overline{P}$ and $c' \notin \overline{P}$, and we say $P$ is a \textbf{\textit{left cut}} with respect to $c$;

\item $c, c' \in \overline{P}$, and we say $P$ is a \textbf{\textit{centre cut}} with respect to $c$.

\end{itemize}

\end{defn}

\begin{figure}[H]
\begin{center}\vspace{-3mm}
\includegraphics[width=8cm]{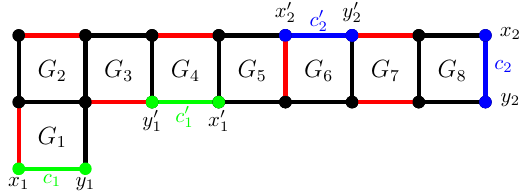}
\caption{An example of a perfect matching $P$ (indicated by the red edges) of a loop graph which is \textit{not} a good matching. Indeed, whilst $P$ is a left cut with respect to $c_2$, it is not a right, left nor centre cut with respect to $c_1$.}
\label{badmatching}
\end{center}
\end{figure}

\begin{prop}\vspace{-4mm}

As in Proposition \ref{induceddef}, any good matching $P$ of a loop graph $G^{\bowtie}$ induces an orientation on the diagonal of each tile of $G^{\bowtie}$.

\end{prop}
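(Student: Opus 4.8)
The plan is to reduce this proposition directly to Proposition \ref{induceddef} by exploiting the fact that a good matching is, by definition, extendable to a perfect matching of the underlying snake graph $\mathcal{G}$. First I would recall that $\mathcal{G}^{\bowtie}$ is obtained from $\mathcal{G}$ by gluing zero, one, or two ends, and that each tile of $\mathcal{G}^{\bowtie}$ corresponds to a tile of $\mathcal{G}$ (the gluing operation identifies boundary edges but does not delete tiles, so the diagonals of $\mathcal{G}^{\bowtie}$ are in bijection with the diagonals of $\mathcal{G}$). Given a good matching $P$ of $\mathcal{G}^{\bowtie}$, let $\widetilde{P}$ be an extension of $P$ to a perfect matching of $\mathcal{G}$, which exists precisely by Definition \ref{good matching}. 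By Proposition \ref{induceddef}, $\widetilde{P}$ induces an orientation on every diagonal of $\mathcal{G}$; transporting this along the tile bijection yields an orientation on every diagonal of $\mathcal{G}^{\bowtie}$, and this is the orientation I would assign to $P$.

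The key steps, in order, are: (1) fix the tile correspondence between $\mathcal{G}^{\bowtie}$ and $\mathcal{G}$ and note it carries diagonals to diagonals; (2) invoke the definition of good matching to obtain the extension $\widetilde{P}$; (3) apply Proposition \ref{induceddef} to $\widetilde{P}$; and (4) pull the resulting orientation back to $\mathcal{G}^{\bowtie}$. The main obstacle is step (4): I must verify that the induced orientation does not depend on the choice of extension $\widetilde{P}$. At each cut $c$, the extension is forced to match either $\{x,y\}$ together or $\{x',y'\}$ together (a right cut or a left cut), and in the centre-cut case $c \in P$ is already part of the matching; in each scenario the edges on the diagonals interior to $\mathcal{G}$ are determined by $P$ itself, since $P$ already specifies how all vertices not on the cut are matched. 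I would argue that the alternating path traversal of Proposition \ref{induceddef} visits each diagonal and records its orientation using only edges that $P$ already contains, so the orientation on any given diagonal is independent of how the two free vertices at a cut are resolved.

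A subtle point I would address is how the glued (cut) edge $c$ interacts with the traversal: when $P$ is a centre cut the edge $c$ itself belongs to $P$ and is traversed as a matching edge, whereas in the right/left cut cases the cut edge is absent from $P$ but the two vertices of $c$ are matched by edges lying strictly on one side of the cut. In all three cases the local configuration at the cut is a legitimate fragment of an alternating path, so the global alternating walk from the south-west vertex of $G_1$ to the north-east vertex of $G_d$ goes through unchanged once we pass to $\widetilde{P}$. I expect the cleanest writeup to simply observe that \emph{any} extension $\widetilde{P}$ works and that the orientation on each diagonal is read off from the matching edges adjacent to that diagonal's tile, all of which lie in $P$; hence well-definedness is immediate and no genuine case analysis on the cut type is needed beyond confirming that a good matching always admits such an extension.
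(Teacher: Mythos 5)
Your proposal takes essentially the same route as the paper: the paper's proof likewise extends $P$ to a perfect matching of the underlying snake graph $\mathcal{G}$ and applies Proposition \ref{induceddef}. The only difference is that the paper observes this extension is \emph{unique} (at each cut the two free vertices can only be matched to each other by the cut-side edge, as you yourself note when saying the extension is ``forced''), which renders your well-definedness verification in step (4) immediate.
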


\begin{proof}

By definition of a good matching, $P$ can be (uniquely) extended to a perfect matching of the underlying snake graph $G$. The result then follows from Proposition \ref{induceddef}.

\end{proof} \vspace{-10mm}

\section{Snake and loop graphs from surfaces}
\label{snakebandconstruction}

Following the work of Musiker, Schiffler and Williams we first explain how to associate snake graphs to plain arcs of $(S,M)$ with respect to ideal triangulations \cite{musiker2011positivity}. We then generalise this approach and associate loop graphs to all tagged arcs. The basic principle is that tiles in our snake and loop graphs correspond to quadrilaterals on the surface.

\subsection{Snake graphs associated to plain arcs}

\subsubsection{T is an ideal triangulation without self-folded triangles}

Throughout this section we let $T = \{\tau_1, \ldots, \tau_n\}$ be a fixed ideal triangulation containing no self-folded triangles.

\begin{defn}
\label{relativeorientation}

Let $\gamma$ be a directed plain arc in $(S,M)$, and denote by $p_1, \ldots, p_d$ the intersection points of $\gamma$ with the arcs of $T$ (listed in order of intersection). In this way we obtain a sequence $i_1, \ldots, i_d$ such that $p_k$ belongs to $\tau_{i_k}$ for each $k \in \{1,\ldots, d\}$. \newline \indent
Let $Q_{i_j}$ be a quadrilateral in $T$ with diagonal labelled by $\tau_{i_j}$. Let $\Delta_j$ and $\Delta_{j+1}$ denote the triangles in $T$ either side of $\tau_{i_j}$, labelled so that, with respect to the orientation of $ \gamma $ through $ p_j $, $ \Delta_j $ precedes $ \Delta_{j+1} $. We view $Q_{i_j}$ as a tile $G_j$ by deleting the diagonal $\tau_{i_j}$ and embedding it in the plane so that:

\begin{itemize}

\item the (deleted) diagonal $\tau_{i_j}$ of $Q_{i_j}$ connected the north-west and south-east vertices of $G_j$.

\item $\Delta_j$ forms the lower half of $G_j$.

\end{itemize}

There are two possible ways to follow the rules above: if the orientation on $ Q_{i_j} $ (induced by $ (S,M) $) agrees with the orientation on $ G_j $ (induced by the clockwise orientation of the plane) then we write $ rel(G_j) =1 $; if it disagrees then $rel(G_j) = -1$. The quantity $rel(G_j)$ is called the \textbf{\textit{relative orientation}} of the tile $G_j$ with respect to $(S,M)$. See Figure \ref{tileorientation}.

\end{defn}

\begin{figure}[H]
\begin{center}
\includegraphics[width=13cm]{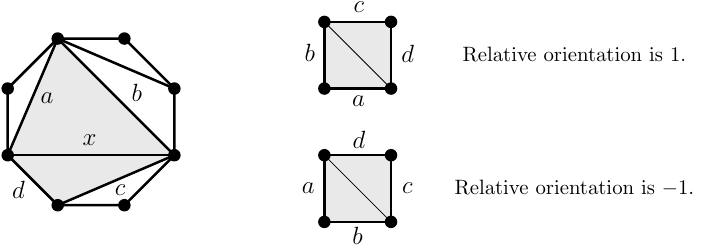}
\caption{An example of relative orientation. The thin line indicates the deleted diagonal.}
\label{tileorientation}
\end{center}
\end{figure}

\begin{defn}
\label{arcsnakegraph}

Let $\gamma$ be a plain arc in $(S,M)$. As in Definition \ref{relativeorientation} we associate a tile $G_i$ for each intersection point, $p_i$, such that $rel(G_i) \neq rel(G_{i+1})$. For each $j \in \{1,\ldots, d-1\}$ note that $\tau_{i_j} $ and $\tau_{i_{j+1}} $ form two sides of the triangle $\Delta_j$; we denote the remaining side by $\tau_{[i_j]}$. The snake graph $G_{\gamma,T} = (G_1,\ldots, G_d)$ associated to $\gamma$ and $T$ is determined by gluing $G_j$ to $G_{j+1}$ along their common edge $\tau_{[i_j]}$. See Figure \ref{surfacesnakenew}.

\end{defn}

\begin{figure}[H]
\begin{center}\vspace{-2mm}
\includegraphics[width=15cm]{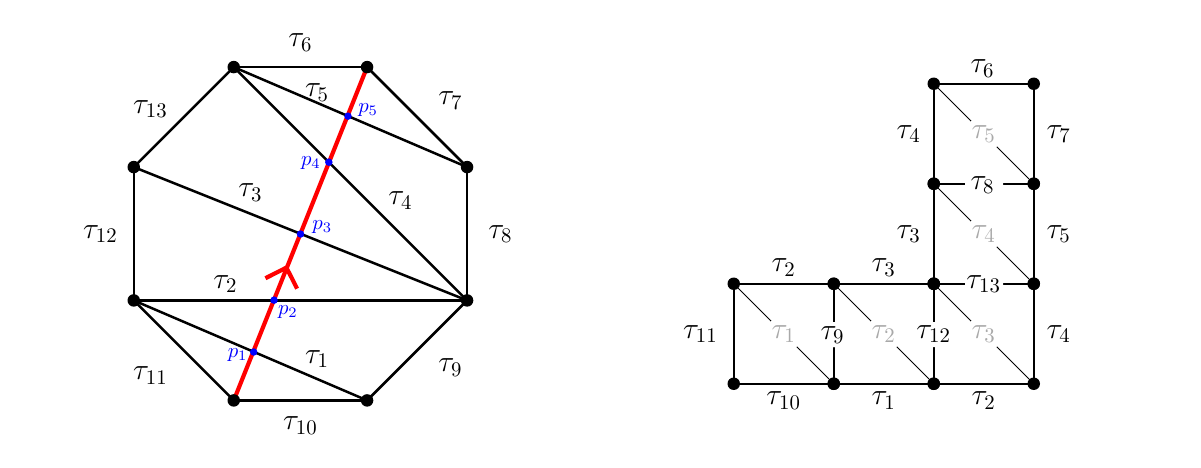}
\caption{The construction of a snake graph from a plain arc. The thin lines indicate the associated diagonals of the tiles, however, these are not considered edges of the actual snake graph.}
\label{surfacesnakenew}
\end{center}
\end{figure}

\subsubsection{T is an arbitrary ideal triangulation}

We now consider the general setup when $T = \{\tau_1, \ldots, \tau_n\}$ is an arbitrary ideal triangulation. Note that in this situation, ambiguities may arise if we followed the gluing procedures outlined in Definition \ref{arcsnakegraph}. Specifically, ambiguity occurs precisely when passing through a self-folded triangle. Musiker, Schiffler and Williams \cite{musiker2011positivity} addressed these points using the following two definitions.

\begin{defn}
Let $\gamma$ be an oriented plain arc. As usual, let $ p_1, \ldots, p_d $ denote the intersection points of $ \gamma $ with $T = \{\tau_1, \ldots, \tau_n\}$, and let $\tau_{i_j}$ denote the arc in $T$ containing $p_j$. We associate a tile $G_j$ to each $p_j$ as follows. If $\tau_{i_j}$ is not the folded side of a self-folded triangle in $T$ then $G_j$ is defined as in Definition \ref{relativeorientation}, and is called an \textit{\textbf{ordinary tile}}. Otherwise, the \textit{\textbf{non-ordinary tile}} $G_j$ is defined by glueing two copies of the triangle $(\tau_{i_{j}}, \tau_{i_{j-1}} = \tau_{i_{j+1}}, \tau_{i_j})$ along $\tau_{i_{j}}$, such that the labels on the North and West (equivalently South and East) edges of $G_j$ are equal.  As usual, in both cases, the diagonal $\tau_{i_j}$ is not considered an edge of $G_j$.

\end{defn}

\begin{defn}
\label{snakegraph}

The \textit{\textbf{snake graph}} $G_{\gamma,T}$ associated to a directed plain arc $\gamma$ is defined as follows:

\begin{itemize}

\item If $G_j$ and $G_{j+1}$ are both ordinary tiles then they are glued as in Definition \ref{arcsnakegraph}.

\item If $G_j$ is a non-ordinary tile then it is glued to $G_{j-1}$ and $G_{j+1}$ according to the arrangement illustrated in Figure \ref{nonordinarytile}.

\end{itemize}

For ordinary tiles we have the notion of \textit{relative orientation} given in Definition \ref{relativeorientation}. This notion is extended to each non-ordinary tile $G_j$ by demanding $rel(G_j) := -rel(G_{j-1})$. Note that the equality $rel(G_i) = -rel(G_{i-1})$ will consequently hold for all tiles appearing in $G_{\gamma,T}$.

\end{defn}

\begin{figure}[H]
\begin{center}
\includegraphics[width=12cm]{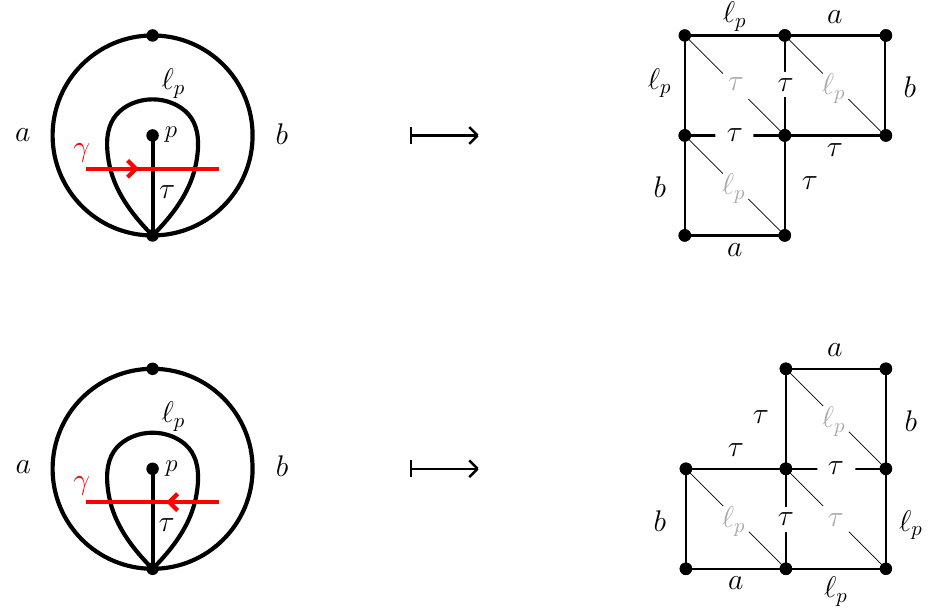}
\caption{The gluing instructions of non-ordinary tiles in a snake graph, see Definition \ref{snakegraph}.}
\label{nonordinarytile}
\end{center}
\end{figure}

\subsection{Loop graphs associated to tagged arcs}

To associate loop graphs to tagged arcs it will first be helpful to introduce the notion of a \textit{hook}. This was also used by Labardini-Fragoso in the context of representations of quivers with potentials arising from surfaces [Section 6.3, \cite{labardini2010quivers}].

\begin{defn}

Let $\gamma$ be a directed tagged arc and let $T$ be an ideal triangulation. If an endpoint of $\gamma$ is incident to a puncture $p$, we define the \textbf{\textit{hook}} at this endpoint to be the curve which:

\begin{itemize}

\item travels around $p$ (clockwise or anticlockwise) intersecting each incident arc at $p$ in $T$ exactly once (up to multiplicity of degree), and then follows $\gamma$, if $\gamma$ starts at the endpoint;

\item or follows $\gamma$ and then travels around $p$ (clockwise or anticlockwise) intersecting each incident arc at $p$ in $T$ exactly once (up to multiplicity of degree), if $\gamma$ ends at the endpoint.

\end{itemize}

\end{defn}

\begin{defn}

Let $\gamma$ be a tagged arc and let $T$ be an ideal triangulation. The associated \textit{\textbf{hooked arc}} $\overset{{}_{\backsim}}{\gamma}$ is obtained by replacing each notched endpoint of $\gamma$ with a hook (see Figure \ref{hook}). Furthermore, for technical reasons, if $\gamma^{\circ} \in T$ then we demand that the tip(s) of the hook(s) intersect $\gamma^{\circ}$ first. Moreover, in this case where $\gamma^{\circ} \in T$, we define the \textit{\textbf{truncated hooked arc}} of $\gamma$ to be resulting curve where the tip(s) have been removed (meaning the truncated version has no intersections with $\gamma$ -- see Figure \ref{truncatedhook}). We use this notion when defining the crossing monomial.

\end{defn}

\begin{figure}[H]
\begin{center}
\includegraphics[width=10cm]{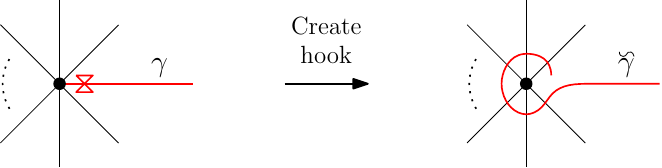}
\caption{A hook associated to a notched endpoint. The arcs in $T$ incident to the puncture determine where the hook ends.}
\label{hook}
\end{center}
\end{figure}

\begin{figure}[H]
\begin{center}
\includegraphics[width=10cm]{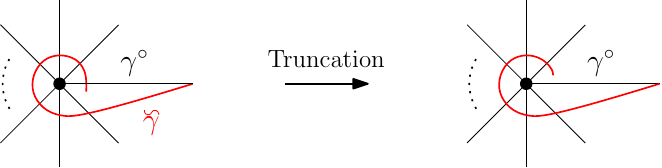}
\caption{We illustrate the truncation process for a tagged arc $\gamma$ whose underlying plain arc $\gamma^{\circ}$ is in $T$. On the left we show the hook associated to a notched endpoint of $\gamma$, and on the right we show the truncated version.}
\label{truncatedhook}
\end{center}
\end{figure}

\newpage

\begin{defn}
\label{surfaceloopgraph}

Let $\gamma$ be a tagged arc, let $T^{\circ}$ be an ideal triangulation. Moreover, as usual, let $\gamma^{\circ}$ denote the underlying plain arc of $\gamma$. In what follows, without loss of generality, we may suppose $\gamma$ is not notched at any puncture enclosed by a self-folded triangle in $T$. \newline 

\noindent We define $G_{\gamma,T}$, the \textit{\textbf{loop graph of $\gamma$ with respect to $T$}}, as follows: \begin{itemize}
\item If $\gamma^{\circ} \notin T$ then consider the snake graph $G_{\overset{{}_{\backsim}}{\gamma},T} = (G_1,\ldots, G_d)$ obtained from the associated hooked arc $\overset{{}_{\backsim}}{\gamma}$. Note that $G_{\gamma^{\circ},T} = (G_{k_1},\ldots, G_{k_2})$ is a subgraph of $G_{\overset{{}_{\backsim}}{\gamma},T}$ for some $k_1,k_2 \in \{1,\ldots, d\}$. We define $G_{\gamma,T}$ to be the loop graph of $G_{\overset{{}_{\backsim}}{\gamma},T}$ where there are loops with respect to $G_1$ and $G_{k_1}$, and with respect to $G_d$ and $G_{k_2}$.
\item If $\gamma^{\circ} \in T$, and $\gamma$ is singly notched at a puncture $p$, then we define $G_{\gamma,T}$ to be the snake graph $G_{\ell_p,T}$, where $\ell_p$ is the unique plain arc enclosing $\gamma$ in a monogon with puncture $p$.
\item If $\gamma^{\circ} \in T$, and $\gamma$ is doubly notched at punctures $p$ and $q$ (not necessarily distinct), then consider the snake graph $G_{\overset{{}_{\backsim}}{\gamma},T} = (G_1,\ldots, G_d)$ obtained from the associated hooked arc $\overset{{}_{\backsim}}{\gamma}$. For notational convenience, let $\tau := \gamma^{\circ}$ (then we also have $\gamma = \tau^{(pq}$). Note that $G_{\tau^{(q)},T} = (G_2,\ldots, G_{k})$ and $G_{\tau^{(p)},T} = (G_{k+1},\ldots, G_d)$ are subgraphs of $G_{\overset{{}_{\backsim}}{\gamma},T}$ for some $k \in \{1,\ldots, d\}$. We define $G_{\gamma,T}$ to be the loop graph of $G_{\overset{{}_{\backsim}}{\gamma},T}$ where there is a loop with respect to $G_1$ and $G_{k+1}$, a loop with respect to $G_d$ and $G_{k}$, and a band with respect to $G_2$ and $G_{d-1}$.
\end{itemize}
Moreover, any loop graph $G^{\bowtie}$ arising in this way is called a \textit{\textbf{surface loop graph}}.

\end{defn}

\begin{rmk}

Note that the hooked arc $\overset{{}_{\backsim}}{\gamma}$ is not unique, since there is a choice of orientation around the puncture at each notched endpoint. However, up to isomorphism, the resulting loop graph $G_{\gamma,T}$ is independent of this choice as the gluing accounts for going ``both ways" around the puncture.

\end{rmk}

\begin{figure}[H]
\begin{center}
\includegraphics[width=14cm]{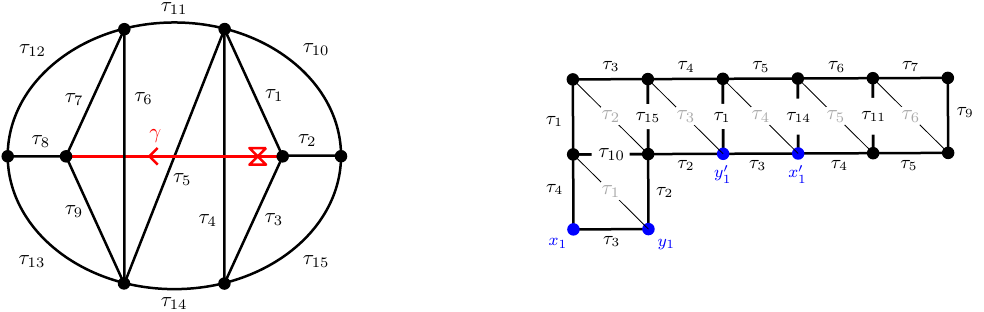}
\caption{An example of the loop graph associated to a singly notched arc. The edge connecting $x_1$ and $y_1$ is glued to the edge connecting $x_1'$ and $y_1'$, where $x_1$ (resp. $y_1$) is glued to $x_1'$ (resp. $y_1'$). Finally, note that the specific hook chosen here is the one travelling clockwise through $\tau_1, \tau_2, \tau_3$.}
\label{single}
\end{center}
\end{figure}

\newpage

\section{Expansion formulae for tagged arcs via loop graphs}
\label{loopgraphsection}

Let $T = \{\tau_1, \ldots, \tau_n\}$ be an ideal triangulation with corresponding tagged triangulation $\iota(T) = \{\tau_1', \ldots, \tau_n'\}$. Let $x_{\tau_1'}, \ldots, x_{\tau_n'}$ be the cluster variables corresponding to $\iota(T)$; when the context is clear we write $x_1, \ldots, x_n$. In a similar fashion we associate variables to $T$: if $\tau_i \in T$ encloses a punctured monogon then it encloses $\tau_i'$ and $\tau_j'$, for some $j$, and we set $x_{\tau_i} := x_ix_j$, otherwise we set $x_{\tau_i} := x_i$. For the (principal) coefficients we set $y_{\tau_i} := y_{\tau_i'} := y_i$ for each $i$.

\begin{defn}
\label{crossingmonomial}

Let $T = \{\tau_1, \ldots, \tau_n\}$ be an ideal triangulation and let $\gamma$ be a directed tagged arc. \newline

If the underlying plain arc $\gamma^{\circ}$ is not in $T$ then we denote by $\tau_{i_1}, \ldots, \tau_{i_d}$ the sequence of arcs in $T$ which the associated hooked arc $\overset{{}_{\backsim}}{\gamma}$ intersects. The \textit{\textbf{crossing monomial}} of $\gamma$ with respect to $T$ is defined as:  \vspace{-1.5mm} $$\text{cross}(\gamma, T) := {\displaystyle \prod_{j=1}^{d} x_{\tau_{i_j}}}.$$

If $\gamma^{\circ} \in T$ and $\gamma$ is a singly or doubly notched arc, then we denote by $\tau_{i_1}, \ldots, \tau_{i_d}$ the sequence of arcs in $T$ which the associated truncated hooked arc intersects. In this setting, the crossing monomial of $\gamma$ with respect to $T$ is defined as:

$$\text{cross}(\gamma, T) := x_{\gamma}\left({\displaystyle \prod_{j=1}^{d} x_{\tau_{i_j}}}\right).$$

\end{defn}

\begin{defn}

Let $T = \{\tau_1, \ldots, \tau_n\}$ be an ideal triangulation and let $ \gamma $ be a directed tagged arc with associated loop graph $ G_{\gamma,T} $. For a good matching $ P$ of $G_{\gamma,T} $ we define the \textit{\textbf{ weight monomial}} $x(P) $ as follows:  \vspace{-0.5mm}

$$ x(P) := {\displaystyle \prod_{\tau_i \in P} x_{\tau_i}}. $$

\end{defn}

\begin{defn}

Let $T$ be an ideal triangulation and let $\gamma$ be a directed tagged arc with associated loop graph $G_{\gamma,T}$. Recall that every good matching $P$ of $G_{\gamma,T}$ induces an orientation on the diagonals of each tile in $G_{\gamma,T}$. We say the diagonal of a tile $G_j$ is \textit{\textbf{positive}} with respect to some $P$ if:

\begin{itemize}

\item the diagonal of $G_j$ is oriented `down' and $rel(G_j)=1$, or

\item the diagonal of $G_j$ is oriented `up' and $rel(G_j)= -1$,

\end{itemize}

\noindent and we say the diagonal is \textit{\textbf{negative}} otherwise.

\end{defn}

\begin{exmp}

In Figure \ref{induceddiagonal}, if we suppose $rel(G_1) = 1$, then the second, third, fourth and fifth diagonals are positive diagonals, and the remaining diagonals are negative with respect to $P$.

\end{exmp}

\newpage

\begin{defn}
\label{goodmatchingcoefficientmonomial}
Let $T = \{\tau_1, \ldots, \tau_n\}$ be an ideal triangulation and let $\gamma$ be a directed tagged arc. For any good matching $P$ of the loop graph $G_{\gamma,T} $ we define the \textit{\textbf{coefficient monomial}}, $y(P) $, as follows:

\begin{equation}
y(P) := \displaystyle \Big(\prod_{\substack{\text{$\tau_{i_j}$ is a } \\ \text{positive}\\ \text{diagonal }}}  y_{\tau_{i_j}} \Big) \Big(\prod_{\substack{\text{$\tau_{i_j}$ is a radius } \\ \text{and a positive}\\ \text{diagonal}}} y_{\tau_{i_j}^{(p)}}^{-1} \Big)
\end{equation}

\noindent where $\tau_{i_j}^{(p)} \in \iota(T)$ is obtained from the plain (radius) arc $\tau_{i_j} \in T$ by adding a notch at the associated puncture $p$.

\end{defn}

\begin{rmk}

Note that, for the convenience of this paper, $ y(P)$ is defined using the language of positive diagonals (first found in \cite{schiffler2010cluster} under the name of $\gamma$-oriented diagonals), rather than the symmetric difference of $P $ with the minimal matching $ P_{-}$. However, as discussed in \cite{musiker2010cluster}, one can easily move between the two. 

\end{rmk}

We now state the main theorem of this paper, which is proven in Section \ref{proof of theorem}.

\begin{thm}
\label{loopexpansion}
Let $(S,M)$ be a bordered surface, and let $T^{\circ}$ be an ideal triangulation with corresponding tagged triangulation $T= \iota(T^\circ)$. We define $\mathcal{A}$ to be the associated cluster algebra with principal coefficients with respect to $\Sigma_T = (\mathbf{x}_T,\mathbf{y}_T,B_T)$. \newline Let $\gamma$ be a tagged arc of $(S,M)$ (for technical reasons, if $\gamma$ is doubly notched then we also assume $(S,M)$ is not twice-punctured and closed). Then the Laurent expansion of $x_{\gamma} \in \mathcal{A}$ with respect to $\Sigma_T$ is given by:

\begin{equation}
\label{loopexpansionformula}
x_{\gamma} = \frac{1}{\text{cross}(\gamma, T^{\circ})}\sum_{P} x(P)y(P),
\end{equation}

\noindent where the sum is taken over all good matchings $P$ of the loop graph $G_{\gamma,T}$. 

\end{thm}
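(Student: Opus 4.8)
The plan is to reduce the statement to the expansion formulae of Musiker, Schiffler and Williams \cite{musiker2011positivity} by exhibiting weight- and coefficient-preserving bijections between good matchings of the loop graph $\mathcal{G}_{\gamma,T}$ and the combinatorial objects appearing in their formulae. I would organise the argument according to the number of notched endpoints of $\gamma$. In the base case where $\gamma$ is a plain arc, we have $\gamma = \gamma^{\circ}$, the hooked arc $\overset{{}_{\backsim}}{\gamma}$ coincides with $\gamma$, and the loop graph $\mathcal{G}_{\gamma,T}$ has no loops: it is simply the snake graph $\mathcal{G}_{\gamma^{\circ},T}$ (here $k_1 = 1$ and $k_2 = d$). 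Consequently every perfect matching is a good matching, and the crossing, weight, and coefficient monomials reduce verbatim to those of \cite{musiker2011positivity}; the formula is then exactly their expansion theorem for plain arcs, and nothing further is needed.

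Next I would treat the singly notched case at a puncture $p$. Here the hooked arc $\overset{{}_{\backsim}}{\gamma}$ encircles $p$ once, so the snake graph $\mathcal{G}_{\overset{{}_{\backsim}}{\gamma},T}$ contains, as the portion beyond the subgraph $\mathcal{G}_{\gamma^{\circ},T} = (G_{k_1},\ldots,G_{k_2})$, precisely the band of tiles that $\overset{{}_{\backsim}}{\gamma}$ picks up while looping around $p$. The central observation is that the MSW snake graph used in their singly notched formula is naturally identified with $\mathcal{G}_{\overset{{}_{\backsim}}{\gamma},T}$, and that their $\gamma$-symmetric perfect matchings are exactly those perfect matchings of this snake graph that, at the cut $c$, match the two cut-vertices on a single side. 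I would make this precise by comparing the two constructions tile-by-tile and checking that the involution implicit in the definition of a $\gamma$-symmetric matching coincides with the left/right-cut dichotomy of the Remark following Definition \ref{good matching}. Restricting a perfect matching of $\mathcal{G}_{\overset{{}_{\backsim}}{\gamma},T}$ to $\mathcal{G}_{\gamma,T}$ then yields a good matching, and by Definition \ref{good matching} this restriction is a bijection onto the good matchings. It remains to verify that the crossing monomial, the weight monomial $x(P)$, and the coefficient monomial $y(P)$ — including the radius-correction factors $y_{\tau_{i_j}^{(p)}}^{-1}$ — agree with the corresponding MSW data under this bijection; this is a direct translation once the tile identification and the positive-diagonal bookkeeping are fixed.

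Finally, for a doubly notched arc the hooked arc encircles both punctures, producing loops at both ends of $\mathcal{G}_{\overset{{}_{\backsim}}{\gamma},T}$, and the MSW formula sums over $\gamma$-compatible pairs of $\gamma$-symmetric matchings. I would show that a good matching of this two-loop graph is equivalent data to such a compatible pair: the good-matching condition at each of the two cuts encodes $\gamma$-symmetry at the respective puncture, while the fact that both cut conditions are read off from a \emph{single} underlying matching of $\mathcal{G}_{\overset{{}_{\backsim}}{\gamma},T}$ (together with $k_1 \le k_2$) encodes the compatibility condition relating the two symmetric halves. The weight and coefficient monomials are again matched by the same tile-by-tile dictionary. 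The excluded configuration of a doubly notched arc on a twice-punctured closed surface is precisely where this compatible-pair correspondence cannot be arranged consistently, in keeping with its exclusion from the hypotheses.

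The main obstacle will be the singly and doubly notched bijections — specifically, proving that the left/right-cut condition of a good matching coincides on the nose with the MSW $\gamma$-symmetric (resp. $\gamma$-compatible) condition, and that the induced orientations on the diagonals transport correctly so that $y(P)$, with its radius corrections, matches the MSW coefficient. This demands a careful, largely case-based analysis of how the tiles of $\mathcal{G}_{\overset{{}_{\backsim}}{\gamma},T}$ near each cut are glued and how the behaviour of a matching at a cut propagates to the positive-diagonal count; the doubly notched case compounds this with the interaction between the two cuts.
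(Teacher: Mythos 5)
Your proposal is correct and follows essentially the same route as the paper: a case split by the number of notched endpoints, with the plain case reducing verbatim to \cite{musiker2011positivity}, and the notched cases handled by weight- and coefficient-preserving bijections between $\gamma$-symmetric matchings (resp.\ $\gamma$-compatible pairs) and good matchings of the loop graph, realised exactly as you describe via the left/right/centre-cut trichotomy at each cut, together with the crossing-monomial identities $cross(\gamma^{(p)},T^{\circ}) = cross(\ell_p,T^{\circ})/cross(\gamma^{\circ},T^{\circ})$ and its doubly notched analogue. One small correction of bookkeeping: the MSW snake graph $\mathcal{G}_{\ell_p,T^{\circ}} = (G_1,\ldots,G_{2k+l})$ is not identified with $\mathcal{G}_{\overset{{}_{\backsim}}{\gamma},T} = (G_1,\ldots,G_{k+l})$ but properly contains it, and the bijection is the restriction $P \mapsto P_{\vert_{(G_1,\ldots,G_{k+l})}}$ (suitably trimmed at the cut edge), with $\gamma$-symmetry of $P$ on the two overlapping halves of $\mathcal{G}_{\ell_p,T^{\circ}}$ ensuring this restriction is well defined and invertible --- which is precisely the paper's Proposition \ref{singlebijection} and Corollary \ref{proof cor}.
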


\begin{rmk}
\label{mainremark}
In this paper, the proof of equation (\ref{loopexpansionformula}) relies on the expansion formulae of Musiker, Schiffler and Williams which were not proven in full generality \cite{musiker2011positivity}. Namely, they did not prove the validity of their expansion formulae in the case of a doubly notched arc $\gamma$ on a twice-punctured closed surface $(S,M)$ (with respect to an ideal triangulation $T$ where neither of the underlying singly notched arcs associated to $\gamma$ are in $T$) -- we point the reader to the proof of Theorem 12.9 in \cite{musiker2011positivity} to understand why this setting was untreated. Therefore, as indicated, our Theorem \ref{loopexpansion} does not claim to cover this case either. However, the strategy employed in \cite{geiss2023bangle} avoids the technicalities faced by Musiker, Schiffler and Williams, and can be used to show equation (\ref{loopexpansionformula}) holds in full generality. Namely, the Combinatorial Key Lemma [Theorem 5.3, \cite{geiss2023bangle}] appearing there also holds for tagged arcs (not just closed curves). The decisive difference in approaches comes from flipping perspectives; Musiker, Schiffler and Williams fixed the triangulation, and altered the curve/tagged arc, whereas the application of the Combinatorial Key Lemma works by 
fixing a curve/tagged arc on the surface and altering the underlying triangulation through the process of cluster mutation (showing that expansion formulae continue to hold after mutation). Since the exchange graph is connected (except in the case of a once-punctured closed surface) [Theorem 7.11, \cite{fomin2008cluster}], the later approach naturally proves the validity of expansion formulae for all tagged arcs. \newline
\indent After the release of the first preprint of this paper in 2020, there has been significant interest in the translation of equation (\ref{loopexpansionformula}) into a statement about posets, rather than loop graphs.
Similar in style to this paper, in 2021, Min Huang directly re-packaged the perfect matching framework of Musiker, Schiffler and Williams \cite{musiker2011positivity} into statements about posets -- obtaining a poset version of equation (\ref{loopexpansionformula}) \cite{huang2021new}.
In 2022, Oğuz and Yıldırım found that, by considering this poset perspective, they were able to describe a computationally efficient method of producing the expansion formulae via products of 2 by 2 matrices \cite{ouguz2025cluster} (following a similar style to Musiker and Williams in \cite{musiker2013matrix}).
This poset framework then naturally arose in Weng's work in 2023, which was motivated by finding combinatorial descriptions of cluster DT transformations \cite{weng2023f}.
Finally, later that year, Pilaud, Reading, Schroll presented an alternative proof of (the poset version of) equation (\ref{loopexpansionformula}) through the lens of hyperbolic geometry  [Theorem 1.1, \cite{pilaud2023posets}]. 
We clarify here that, as shown in Corollary \ref{maincor}, the (surface type) poset expansion formulae arising in all four of these papers are directly recovered from equation (\ref{loopexpansionformula}) and Section \ref{latticesection}. Namely, with respect to a fixed initial tagged triangulation $T$ of a bordered surface $(S,M)$, then for each tagged arc $\gamma$ of $(S,M)$, the poset $P_{\gamma}$ they consider is precisely the poset $Q_{G_{\gamma,T}}$ defined in Definition \ref{WrittenHasseQuiverDefinition}.
\end{rmk}

\begin{exmp}
Consider the singly notched arc $\gamma$ and (ideal) triangulation $T$ found in Figure \ref{single}. To obtain the cluster variable $x_{\gamma}$ with respect to principal coefficients at $T$, Theorem \ref{loopexpansion} tells us we must compute the crossing monomial $\text{cross}(\gamma, T)$ and enumerate all good matchings of the associated loop graph (also found in Figure \ref{single}). We see $$\text{cross}(\gamma, T) = x_1x_2x_3x_4x_5x_6$$ and the complete collection of good matchings is provided in Figure \ref{singlelattice}. We thus obtain: \vspace{-5mm} 

\begin{gather*}
\begin{aligned}
x_{\gamma}
&=
\frac{1}{x_1 x_2 x_3 x_4 x_5 x_6}
\Big(
  x_1 x_2 x_4^2 x_6 x_9
+ x_1^2 x_4 x_6 x_9 x_{15}\textcolor{red}{y_3}
+ x_1 x_2 x_4 x_9 x_{11} x_{14}\textcolor{red}{y_5}  \\
&\quad
+ x_1 x_3 x_4 x_6 x_9 x_{10}\textcolor{red}{y_2 y_3}
+ x_1^2 x_9 x_{11} x_{14} x_{15}\textcolor{red}{y_3 y_5}
+ x_1 x_2 x_4 x_5 x_7 x_{14}\textcolor{red}{y_5 y_6} \\
&\quad
+ x_1 x_3 x_5 x_9 x_{11} x_{15}\textcolor{red}{y_3 y_4 y_5}
+ x_1 x_3 x_9 x_{10} x_{11} x_{14}\textcolor{red}{y_2 y_3 y_5}
+ x_1^2 x_5 x_7 x_{14} x_{15}\textcolor{red}{y_3 y_5 y_6} \\
&\quad
+ x_3^2 x_5 x_9 x_{10} x_{11}\textcolor{red}{y_2 y_3 y_4 y_5}
+ x_1 x_3 x_5^2 x_7 x_{15}\textcolor{red}{y_3 y_4 y_5 y_6}
+ x_1 x_3 x_5 x_7 x_{10} x_{14}\textcolor{red}{y_2 y_3 y_5 y_6} \\
&\quad
+ x_2 x_3 x_4 x_5 x_9 x_{11}\textcolor{red}{y_1 y_2 y_3 y_4 y_5}
+ x_3^2 x_5^2 x_7 x_{10}\textcolor{red}{y_2 y_3 y_4 y_5 y_6}
+ x_2 x_3 x_4 x_5^2 x_7\textcolor{red}{y_1 y_2 y_3 y_4 y_5 y_6}
\Big).
\end{aligned}
\end{gather*}

\end{exmp}

\begin{exmp}
The reader may find it helpful to compare the loop graph expansion formulae with Musiker, Schiffler and Williams' method. To this end, we now follow the doubly-notched example found in [Section 5.3, \cite{musiker2011positivity}], which we have also illustrated in Figure \ref{double}. For this choice of $\gamma^{(pq)}$ and $T$, the collection of good matchings of $G_{\gamma^{(pq)},T}$ is shown in Figure \ref{doublelattice}. Moreover, since $$\text{cross}(\gamma^{(pq)}, T) = x_3x_4x_5x_6x_7x_8$$ Theorem \ref{loopexpansion} gives us the following expansion of $x_{\gamma^{(pq)}}$ with respect to principal coefficients at $T$: \vspace{-5mm} 

\begin{gather*}
\begin{aligned}
x_{\gamma^{(pq)}}
&=
\frac{1}{x_3 x_4 x_5 x_6 x_7 x_8}
\Big(
  x_3 x_4 x_6^2 x_8
+ x_4^2 x_6 x_8 x_{10}\textcolor{red}{y_5}
 + x_3 x_4 x_6 x_8 x_9\textcolor{red}{y_7} \\
&\quad
+ x_2 x_4 x_5 x_6 x_8\textcolor{red}{y_3 y_5}
+ x_4^2 x_8 x_9 x_{10}\textcolor{red}{y_5 y_7}
+ x_2 x_4 x_5 x_8 x_9\textcolor{red}{y_3 y_5 y_7} \\
&\quad
+ x_4 x_5 x_7 x_9 x_{10}\textcolor{red}{y_5 y_6 y_7}
+ x_2 x_5^2 x_7 x_9\textcolor{red}{y_3 y_5 y_6 y_7}
+ x_4 x_5 x_6 x_7 x_{10}\textcolor{red}{y_5 y_6 y_7 y_8} \\
&\quad
+ x_3 x_5 x_6 x_7 x_9\textcolor{red}{y_3 y_4 y_5 y_6 y_7}
+ x_2 x_5^2 x_6 x_7\textcolor{red}{y_3 y_5 y_6 y_7 y_8}
+ x_3 x_5 x_6^2 x_7\textcolor{red}{y_3 y_4 y_5 y_6 y_7 y_8}
\Big).
\end{aligned}
\end{gather*}

\end{exmp}

\begin{figure}[H]
\begin{center}
\includegraphics[width=14cm]{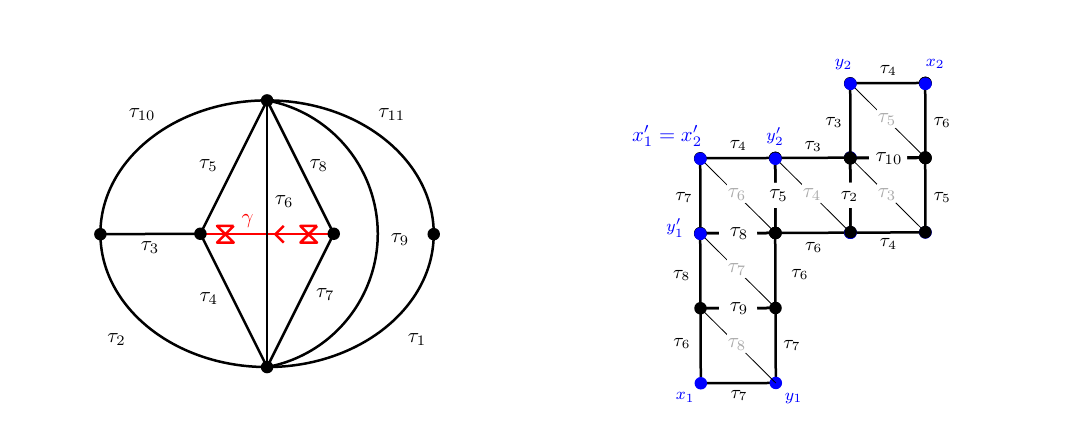}\vspace{-5mm} 
\caption{An example of the loop graph associated to a doubly notched arc $\gamma^{(pq)}$ whose underlying arc $\gamma$ is not in the triangulation. The edge connecting $x_i$ and $y_i$ is glued to the edge connecting $x_i'$ and $y_i'$, for $i \in \{1,2\}$. In this gluing $x_i$ (resp. $y_i$) is glued to $x_i'$ (resp. $y_i'$).}
\label{double}
\end{center}
\end{figure}

\begin{exmp}\vspace{-5mm} 
Consider the doubly notched arc $\gamma^{(pq)}$ and (ideal) triangulation $T$ found in Figure \ref{DoubleUnderlying}. To obtain the cluster variable $x_{\gamma^{(pq)}}$ with respect to principal coefficients at $T$, Theorem \ref{loopexpansion} tells us we must compute the crossing monomial $\text{cross}(\gamma^{(pq)}, T)$ and enumerate all good matchings of the associated loop graph (also found in Figure \ref{DoubleUnderlying}). We see $$\text{cross}(\gamma^{(pq)}, T) = x_3x_4x_5x_7x_8x_{\gamma}$$ and the complete collection of good matchings is provided in Figure \ref{DoubleLatticeUnderlying}. We thus obtain: \vspace{-5mm}

\begin{gather*}
\begin{aligned}
x_{\gamma^{(pq)}}
&=
\frac{1}{x_3 x_4 x_5 x_7 x_8 x_{\gamma}}
\Big(
  x_3 x_4 x_5 x_7 x_8
+ x_3 x_4^2 x_8^2\textcolor{red}{y_{\gamma}}
+ x_3x_4x_8 x_9 x_{\gamma}\textcolor{red}{y_7 y_{\gamma}} \\
&\quad
+ x_4^2 x_8 x_{10} x_{\gamma} \textcolor{red}{y_5 y_{\gamma}}
+  x_4 x_9 x_{10} x_{\gamma}^2\textcolor{red}{y_5 y_7 y_{\gamma}}
+  x_2 x_4 x_5 x_8 x_{\gamma} \textcolor{red}{y_3 y_5 y_{\gamma}} \\
&\quad
+ x_4 x_5 x_7 x_{10} x_{\gamma}\textcolor{red}{y_5 y_7 y_8 y_{\gamma}}
+ x_2 x_5 x_9 x_{\gamma}^2 \textcolor{red}{y_3 y_5 y_7 y_{\gamma}}
+ x_2 x_5^2 x_7 x_{\gamma}\textcolor{red}{y_3 y_5 y_7 y_8 y_{\gamma}} \\
&\quad
+ x_3 x_5 x_7 x_9 x_{\gamma}\textcolor{red}{y_3 y_4 y_5 y_7 y_{\gamma}} 
+ x_3 x_5^2 x_7^2 \textcolor{red}{y_3 y_4 y_5 y_7 y_8 y_{\gamma}}
+ x_3 x_4 x_5 x_7 x_8 \textcolor{red}{y_3 y_4 y_5 y_7 y_8 y_{\gamma}^2}
\Big).
\end{aligned}
\end{gather*}

\end{exmp}

\begin{figure}[H]
\begin{center}
\includegraphics[width=14cm]{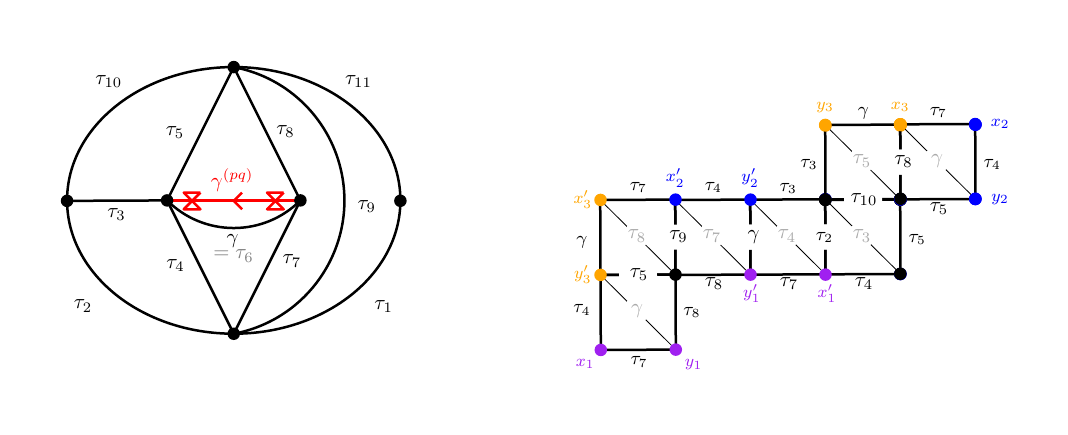}\vspace{-6mm} 
\caption{An example of the loop graph associated to a doubly notched arc $\gamma^{(pq)}$ whose underlying arc $\gamma$ is in the triangulation. The edge connecting $x_i$ and $y_i$ is glued to the edge connecting $x_i'$ and $y_i'$, for $i \in \{1,2,3\}$. In this gluing $x_i$ (resp. $y_i$) is glued to $x_i'$ (resp. $y_i'$).}
\label{DoubleUnderlying}
\end{center}
\end{figure}

\begin{figure}[H]
\begin{center}
\includegraphics[width=14cm]{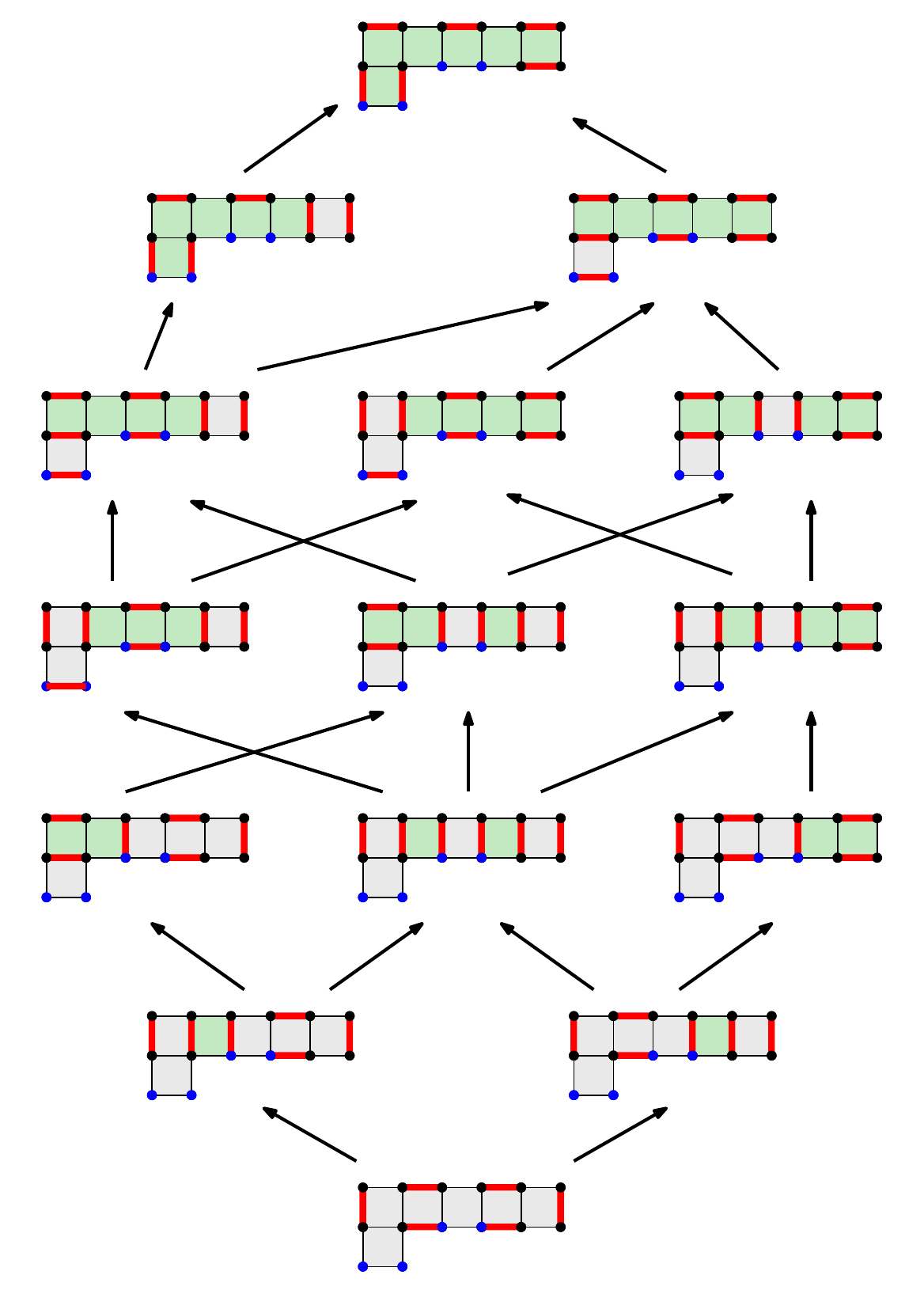}
\caption{The lattice of good matchings of the loop graph found in Figure \ref{single}. Tiles with positive diagonals are shaded in green.}
\label{singlelattice}
\end{center}
\end{figure}

\begin{figure}[H]
\begin{center}
\includegraphics[width=10cm]{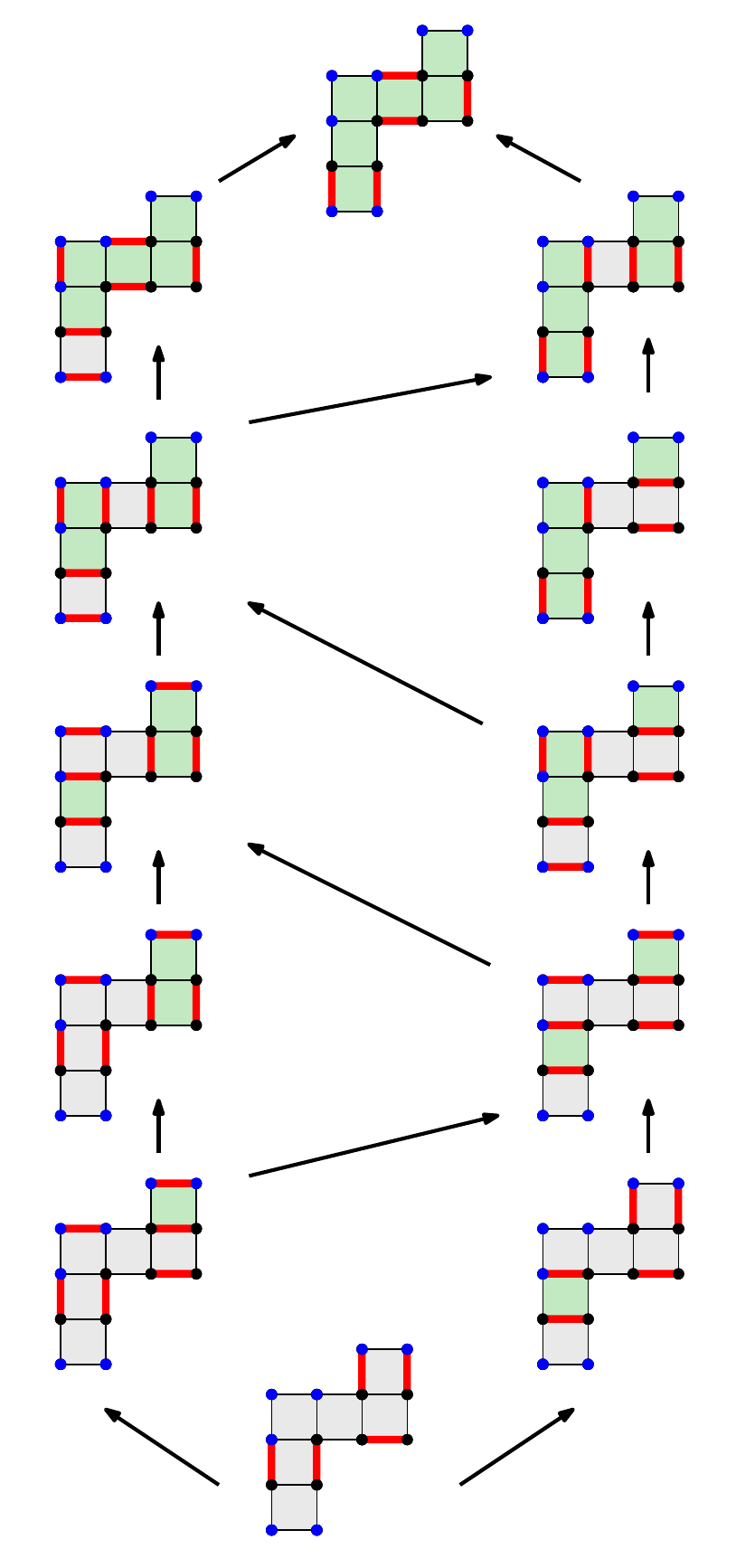}
\caption{The lattice of good matchings of the loop graph found in Figure \ref{double}. Tiles with positive diagonals are shaded in green.}
\label{doublelattice}
\end{center}
\end{figure}

\begin{figure}[H]
\begin{center}
\includegraphics[width=14cm]{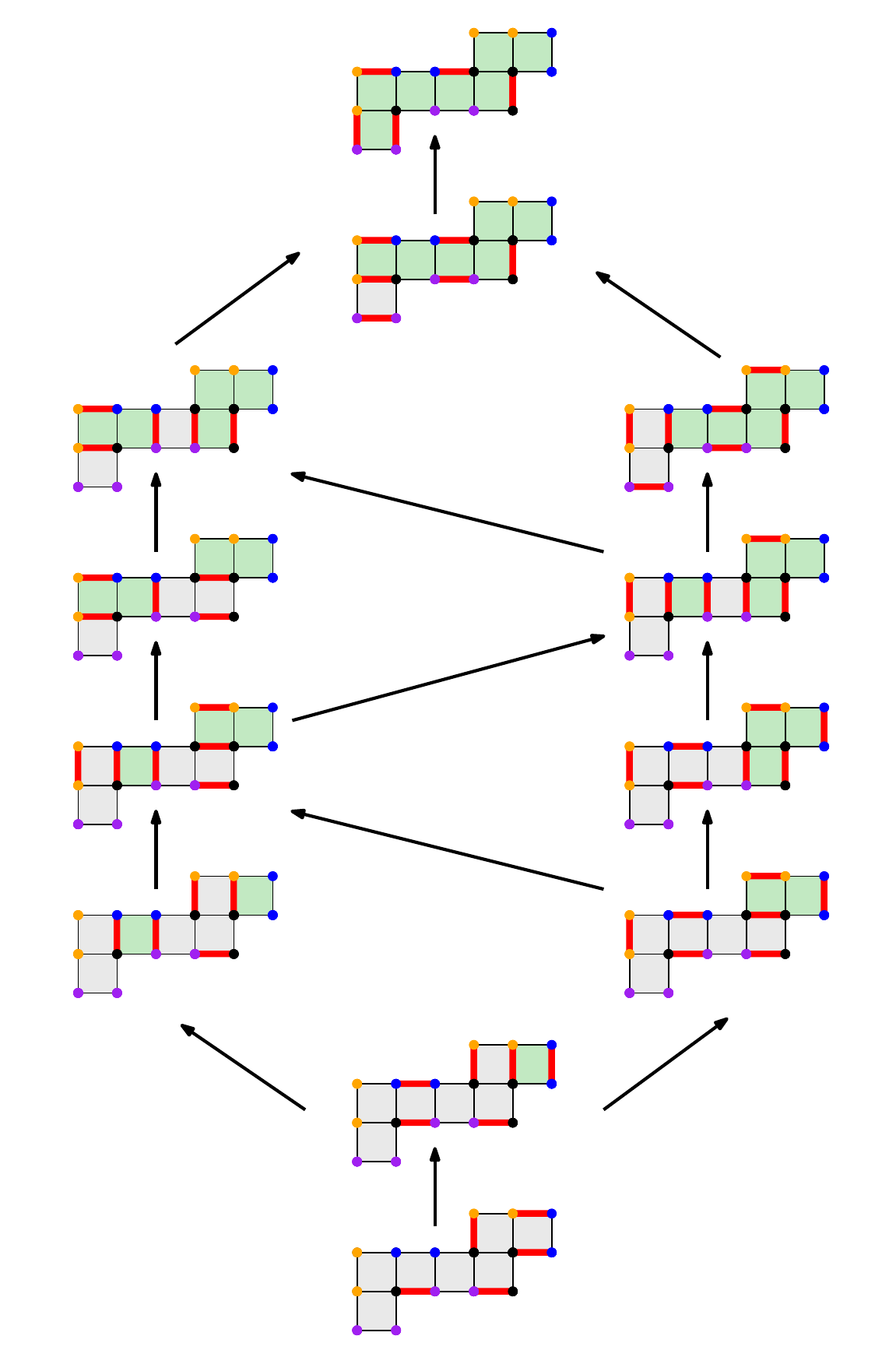}
\caption{The lattice of good matchings of the loop graph found in Figure \ref{DoubleUnderlying}. Tiles with positive diagonals are shaded in green.}
\label{DoubleLatticeUnderlying}
\end{center}
\end{figure}

\section{Proof of Theorem \ref{loopexpansion}}
\label{proof of theorem}

We prove Theorem \ref{loopexpansion} by reinterpreting Musiker, Schiffler and Williams' language of `$\gamma$-symmetric perfect matchings' and `$\gamma$-compatible pairs of $\gamma$-symmetric perfect matchings' as statements about good matchings of loop graphs.

\subsection{$\gamma$-symmetric perfect matchings}

\begin{defn}
\label{singlynotcheddef}
Let $ \gamma^{(p)} $ be a tagged arc which has one end notched at a puncture $p$, and its other end tagged plain at another marked point. We call $\gamma^{(p)} $ a \textbf{\textit{singly notched arc}} at $ p$ and denote its underlying plain arc by $\gamma$. Furthermore, we denote by $\ell_p$ the unique plain arc enclosing $\gamma$ in a monogon with puncture $p$.
\end{defn}

Note that if $ \gamma$ has $k $ intersection points with $T ^{\circ}$ and $\gamma^{\circ} \notin T$, then $\ell_p $ has $ 2k + l$ intersection points for $l \geq 2 $. Specifically, $ l$ is the degree of the puncture $p$ in $T ^{\circ}$ (the number of ends of arcs in $T^{\circ}$ incident to $p$). See Figure \ref{SinglePunctureIntersections}.

\begin{figure}[H]
\begin{center}
\includegraphics[width=9cm]{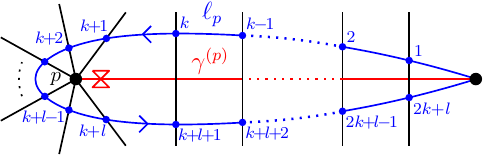}
\caption{An illustration of a singly notched arc $\gamma^{(p)}$ and the associated plain arc $\ell_p$ enclosing $\gamma^{(p)}$ in a monogon with puncture $p$. An underlying ideal triangulation $T^{\circ}$ is denoted in black, and with respect to the chosen orientation on $\ell_p$, we label the intersection points of $\ell_p$ with $T^{\circ}$ by $1 \ldots, 2k+ l$ (where $k$ is the number of intersections of $\gamma$ with $T^{\circ}$, and $l$ is the degree of $p$ in $T^{\circ}$.}
\label{SinglePunctureIntersections}
\end{center}
\end{figure}

\begin{defn}

Consider the snake graph $ G_{\ell_p,T^{\circ}} = (G_1, \ldots, G_{2k +l}) $ and let $$ \mathcal{H}_{\gamma, 1} := (G_1,\ldots,G_d)\setminus \{NE(G_k)\} $$ be the subgraph of $ G_{\gamma, 1} := (G_1, \ldots, G_k) $ where the North-East vertex of $ G_k $ and its incident edges $N(G_k), E(G_k)$ have been removed.

Similarly, let $$ \mathcal{H}_{\gamma, 2} := (G_{k+l+1}, \ldots ,G_{2k+l}) \setminus \{SW(G_{k+l+1})\} $$ be the subgraph of $ G_{\gamma, 2} := (G_{k+l+1},\ldots ,G_{2k+l}) $ where the South-West vertex of $ G_{k+l+1} $ and its incident edges $ S(G_{k+l+1}), W(G_{k+l+1})$ have been removed. \newline \indent

Note that $$ \mathcal{H}_{\gamma, 1} \cong \mathcal{H}_{\gamma, 2} \hspace{11mm} \text{and}  \hspace{11mm}  G_{\gamma, 1}  \cong G_{\gamma, 2} \cong  G_{\gamma, T^{\circ}}.$$

\end{defn}

\begin{defn}

A perfect matching $ P $ of $ G_{\ell_p,T^{\circ}} $ is said to be \textbf{\textit{$ \gamma $-symmetric}} if: $$ P_{\vert_{\mathcal{H}_{\gamma, 1}}}  \cong P_{\vert_{\mathcal{H}_{\gamma, 2}}} $$ with respect to the isomorphism $ \mathcal{H}_{\gamma, 1}  \cong  \mathcal{H}_{\gamma, 2} $.

\end{defn}

\begin{defn}

Let $ P $ be a $ \gamma $-symmetric perfect matching of $ G_{\ell_p,T^{\circ}} $. The associated \textit{\textbf{weight monomial}} and \textit{\textbf{coefficient monomial}} are defined respectively, as follows: $$ \overline{x}(P) := \frac{x(P)}{x(P_{\vert_{G_{\gamma, i}}})}  \hspace{10mm} \text{and}  \hspace{10mm} \overline{y}(P) := \frac{y(P)}{y(P_{\vert_{G_{\gamma, i}}})}. $$

The index $ i \in \{1,2\} $ above is chosen such that $ P_{\vert_{G_{\gamma, i}}} $ is a perfect matching of $ G_{\gamma, i} $ -- this is well defined by [Lemma 12.4, \cite{musiker2011positivity}].

\end{defn}

\begin{thm}[Theorem 4.17. \cite{musiker2011positivity}]
\label{singlynotched}
Let $(S,M)$ be a bordered surface with puncture $ p $. Let $ T^{\circ} $ be an ideal triangulation with corresponding tagged triangulation $ T = \iota(T^{\circ}) $. Let $\mathcal{A} $ be the associated cluster algebra with principal coefficients with respect to $\Sigma_T = (\mathbf{x}_T,\mathbf{y}_T,B_T)$. Suppose $\gamma^{(p)}$ is a singly notched arc at $p$ whose underlying plain arc $\gamma$ is not in $T$, and that $p$ is not the puncture of a self-folded triangle in $T$. Then the Laurent expansion of $x_{\gamma^{(p)}} \in \mathcal{A}$ with respect to $\Sigma_T$ is given by:

\begin{equation}
x_{\gamma^{(p)}} = \frac{\text{cross}(\gamma,T^{\circ})}{\text{cross}(\ell_p,T^{\circ})} {\displaystyle \sum_{P} \overline{x}(P)\overline{y}(P)}
\end{equation}

\noindent where the sum is over all $ \gamma $-symmetric perfect matchings $ P $ of the snake graph $ G_{\ell_p, T^{\circ}} $.

\end{thm}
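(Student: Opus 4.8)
The plan is to reduce the notched formula to the already-available plain-arc expansion by exploiting that both $\ell_p$ and $\gamma$ are honest plain arcs. The starting point is the multiplicative relation $x_{\ell_p} = x_{\gamma}\,x_{\gamma^{(p)}}$ in $\mathcal{A}$. Granting this, I would write $x_{\gamma^{(p)}} = x_{\ell_p}/x_{\gamma}$ and substitute the plain-arc expansions of $x_{\ell_p}$ and $x_{\gamma}$. The prefactor $cross(\gamma,T^{\circ})/cross(\ell_p,T^{\circ})$ then appears at once, and the theorem reduces to the purely combinatorial identity
$$\frac{\sum_{P} x(P)\,y(P)}{\sum_{Q} x(Q)\,y(Q)} \;=\; \sum_{P \ \gamma\text{-symmetric}} \overline{x}(P)\,\overline{y}(P),$$
where $P$ ranges over all perfect matchings of $\mathcal{G}_{\ell_p,T^{\circ}}$ and $Q$ over all perfect matchings of $\mathcal{G}_{\gamma,T^{\circ}}$; equivalently, the numerator should factor as the denominator times the $\gamma$-symmetric generating function.

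First I would settle the algebraic input $x_{\ell_p} = x_{\gamma}\,x_{\gamma^{(p)}}$. Geometrically $\gamma$ and $\gamma^{(p)}$ are the two compatible tagged arcs sharing the underlying curve but carrying opposite taggings at $p$, while $\ell_p$ encloses both of them together with $p$ in a monogon; this is exactly the configuration for which the lambda-length identity of Fomin and Thurston produces a product relation, which then transports to the corresponding cluster variables. I would cite \cite{fomin2018cluster} for this and regard it as the only genuinely surface-theoretic ingredient of the argument.

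The heart of the proof is the combinatorial factorization, which I would establish through a weight-preserving bijection
$$\mathrm{Match}(\mathcal{G}_{\ell_p,T^{\circ}}) \;\longleftrightarrow\; \mathrm{Match}(\mathcal{G}_{\gamma,T^{\circ}}) \times \{\gamma\text{-symmetric matchings}\}.$$
The snake graph $\mathcal{G}_{\ell_p,T^{\circ}}$ is built from two copies $\mathcal{G}_{\gamma,1}\cong\mathcal{G}_{\gamma,2}\cong\mathcal{G}_{\gamma,T^{\circ}}$ joined by a block of $l$ middle tiles coming from the arcs incident to $p$. Given a $\gamma$-symmetric matching $P$ and an arbitrary matching $Q$ of $\mathcal{G}_{\gamma,T^{\circ}}$, the map would retain the middle block together with the half of $P$ \emph{not} used in the normalisation while overwriting the half $\mathcal{G}_{\gamma,i}$ with $Q$. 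Since $\overline{x}(P)=x(P)/x(P|_{\mathcal{G}_{\gamma,i}})$ and $\overline{y}(P)=y(P)/y(P|_{\mathcal{G}_{\gamma,i}})$ strip off precisely the contribution of that half, this replacement multiplies $\overline{x}(P)\overline{y}(P)$ by exactly $x(Q)y(Q)$, so the weights behave as required; the real content is that the map is well defined and bijective onto all of $\mathrm{Match}(\mathcal{G}_{\ell_p,T^{\circ}})$.

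The main obstacle I anticipate is the junction analysis at the two seams where a $\gamma$-half meets the middle block: a matching of $\mathcal{G}_{\ell_p,T^{\circ}}$ need not split cleanly into two halves plus a middle, since edges may straddle the seams. This is exactly why one passes to the trimmed subgraphs $\mathcal{H}_{\gamma,1}$ and $\mathcal{H}_{\gamma,2}$ (removing the offending corner vertex and its two incident edges) and why the well-definedness of the index $i$ — namely that for each matching exactly one of the two restrictions is a genuine perfect matching of $\mathcal{G}_{\gamma,i}$ — must be invoked from [Lemma 12.4, \cite{musiker2011positivity}]. I would therefore carry the bijection out at the level of the $\mathcal{H}_{\gamma,i}$ and then verify the coefficient bookkeeping separately: the $\overline{y}$-side is subtler than the $\overline{x}$-side because of the radius correction factors $y_{\tau_{i_j}^{(p)}}^{-1}$, so I would confirm that the orientations induced on the diagonals of the two symmetric halves agree under the isomorphism $\mathcal{G}_{\gamma,1}\cong\mathcal{G}_{\gamma,2}$, ensuring each factor removed in passing to $\overline{y}(P)$ cancels exactly one of the two symmetric contributions.
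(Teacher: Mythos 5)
This statement is not proved in the paper at all: it is imported verbatim as [Theorem 4.17, \cite{musiker2011positivity}] and then consumed, via Proposition \ref{singlebijection}, in the proof of the Main Theorem. So the relevant comparison is with Musiker--Schiffler--Williams' own argument, and your outline reconstructs essentially that argument: their proof likewise starts from the Fomin--Thurston product relation $x_{\ell_p}=x_{\gamma}x_{\gamma^{(p)}}$ (the same identity this paper invokes in its introduction), expresses $x_{\ell_p}$ through the snake-graph matching formula for $\mathcal{G}_{\ell_p,T^{\circ}}$, and reduces the theorem to the weight-preserving factorization of the full matching generating function of $\mathcal{G}_{\ell_p,T^{\circ}}$ into the $\gamma$-symmetric generating function times the matching sum of $\mathcal{G}_{\gamma,T^{\circ}}$. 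The normalisations $\overline{x}(P)=x(P)/x(P\vert_{\mathcal{G}_{\gamma,i}})$, $\overline{y}(P)=y(P)/y(P\vert_{\mathcal{G}_{\gamma,i}})$ and the prefactor $cross(\gamma,T^{\circ})/cross(\ell_p,T^{\circ})$ exist precisely to make this division come out right, as you observe, and your appeal to [Lemma 12.4, \cite{musiker2011positivity}] for the well-definedness of the index $i$ is the correct tool.

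Two steps in your sketch are thinner than what a complete proof needs. First, ``substitute the plain-arc expansions of $x_{\ell_p}$ and $x_{\gamma}$'' is not a direct instance of the plain-arc theorem: $\ell_p$ cuts out a once-punctured monogon, so it is \emph{not} a tagged arc and carries no cluster variable; $x_{\ell_p}$ is \emph{defined} (consistently with lambda lengths, cf.\ the convention $x_{\tau_i}:=x_ix_j$ in Section 5 of this paper) as the product $x_{\gamma}x_{\gamma^{(p)}}$, and the assertion that the snake-graph formula computes this element is itself a statement requiring justification, which MSW supply but you would have to cite or re-prove rather than treat $\ell_p$ as an ``honest plain arc'' covered by the cluster-variable expansion theorem. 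Second, your forward map (overwrite the distinguished half $\mathcal{G}_{\gamma,i}$ of a $\gamma$-symmetric $P'$ by an arbitrary $Q$) is fine, but bijectivity requires decomposing an \emph{arbitrary} perfect matching of $\mathcal{G}_{\ell_p,T^{\circ}}$ at the seams, whereas Lemma 12.4 as you invoke it only governs $\gamma$-symmetric matchings. The missing analysis is exactly the one this paper carries out in Proposition \ref{singlebijection} for the closely related bijection: one uses that $(G_{k-1},G_k,G_{k+1})$ (or its mirror) is zig-zag with $E(G_k)$ and $S(G_k)$ boundary edges, splits matchings according to which of these edges (or neither, with $NE(G_k)$ matched into the middle block) occurs, and reconstructs the pair $(Q,P')$ uniquely in each case; without this case analysis the uniqueness of the overwritten half, hence injectivity, is unproved. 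Your final worry about the radius correction factors $y_{\tau_{i_j}^{(p)}}^{-1}$ is essentially harmless for this theorem: in MSW's $\overline{y}$ such corrections arise only from self-folded triangles crossed by $\ell_p$, and the check you name --- that corresponding diagonals of the two symmetric halves are simultaneously positive --- is indeed the point used here (and again in Cases 1 and 2 of the proof of Proposition \ref{singlebijection}).
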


\begin{prop}
\label{singlebijection}
Let $T$ be an ideal triangulation and let $ \gamma^{(p)} $ be a singly notched arc at $p$ whose underlying plain arc $ \gamma $ is not in $ T $, and suppose that $ p $ is not the puncture of a self-folded triangle in $ T $. Then there exists a bijection 

\begin{align*}
 \Phi \hspace{1mm}: \hspace{2mm} &\Big\{ \stackanchor{\text{\hspace{0.5mm} $ \gamma $-symmetric perfect matchings \hspace{0.5mm}}}{\text{$P$ of $G_{\ell_p, T}$}}\Big\} \hspace{5mm} \longrightarrow \hspace{5mm} \Big\{ \text{Good matchings of $G_{\gamma^{(p)}, T}$}\Big\}
\end{align*}

such that $$\overline{x}(P) = x(\Phi(P)) \hspace{5mm} and \hspace{5mm} \overline{y}(P) = y(\Phi(P))$$ for all $\gamma$-symmetric perfect matchings $P$ of $G_{\ell_p, T}$.

\end{prop}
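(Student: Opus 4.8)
The plan is to construct the bijection $\Phi$ explicitly from the geometry of the loop graph, then verify the two weight/coefficient identities. Recall that $\mathcal{G}_{\gamma^{(p)},T}$ is the loop graph obtained from the snake graph $\mathcal{G}_{\tilde\gamma,T}$ of the hooked arc by gluing at the two ends, and that the hook at $p$ is precisely what, on the surface, produces the arc $\ell_p$ enclosing $\gamma$ in a monogon. The key structural observation I would establish first is that the underlying snake graph $\mathcal{G}_{\tilde\gamma,T}$ is (isomorphic to) $\mathcal{G}_{\ell_p,T^\circ} = (G_1,\ldots,G_{2k+l})$, with the central sub-snake-graph $\mathcal{G}_{\gamma^\circ,T}=(G_{k_1},\ldots,G_{k_2})$ corresponding to the copy of $\gamma$ that both $\mathcal{G}_{\gamma,1}$ and $\mathcal{G}_{\gamma,2}$ are isomorphic to. The loop at each end is exactly the gluing that identifies the two copies $\mathcal{H}_{\gamma,1}\cong\mathcal{H}_{\gamma,2}$ appearing in the definition of $\gamma$-symmetry.

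With this identification in hand, I would define $\Phi$ as follows. Given a $\gamma$-symmetric perfect matching $P$ of $\mathcal{G}_{\ell_p,T^\circ}$, the symmetry condition $P|_{\mathcal{H}_{\gamma,1}}\cong P|_{\mathcal{H}_{\gamma,2}}$ guarantees that when we fold the snake graph onto its loop graph (identifying the glued edges of the two cuts), the two matched halves agree on the overlapping vertices, so $P$ descends to a well-defined perfect matching $\Phi(P)$ of $\mathcal{G}_{\gamma^{(p)},T}$. I would then check that $\Phi(P)$ is a \emph{good} matching: the extension of $\Phi(P)$ back to the snake graph is precisely $P$ (up to the well-defined choice of index $i$), so by Definition~\ref{good matching} it extends to a perfect matching of the underlying snake graph, and the ``same side of the cut'' condition of the good-matching remark translates directly into the symmetry requirement that determines which copy $\mathcal{G}_{\gamma,i}$ carries the matching. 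Conversely, given a good matching $Q$ of $\mathcal{G}_{\gamma^{(p)},T}$, I lift it to its unique snake-graph extension and verify this extension is $\gamma$-symmetric; this gives the inverse map, establishing the bijection.

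For the weight and coefficient identities, the definitions are set up so that $\overline{x}(P)=x(P)/x(P|_{\mathcal{G}_{\gamma,i}})$ literally removes the contribution of one redundant copy of $\gamma^\circ$, and this is exactly the contribution that the folding in $\Phi$ collapses — the overlapping tiles are counted once in the loop graph rather than twice in the snake graph. So I would argue that $x(\Phi(P))$ equals $x(P)$ with the edges of the identified copy $\mathcal{G}_{\gamma,i}$ removed, giving $x(\Phi(P))=\overline{x}(P)$; the identical bookkeeping for positive diagonals (which are preserved under the folding since relative orientations and induced diagonal orientations transfer through the isomorphism $\mathcal{H}_{\gamma,1}\cong\mathcal{H}_{\gamma,2}$) yields $y(\Phi(P))=\overline{y}(P)$.

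The main obstacle I anticipate is the careful verification that the good-matching condition is \emph{equivalent} to $\gamma$-symmetry, rather than merely implied by it — in particular pinning down how the three cases (left cut, right cut, centre cut) of the good-matching remark correspond to the possible behaviours of a $\gamma$-symmetric matching at the hook, and confirming that the well-definedness of the index $i$ (guaranteed by [Lemma 12.4, \cite{musiker2011positivity}]) matches the well-definedness of the cut type. A secondary subtlety is bookkeeping the $l$ tiles $G_{k+1},\ldots,G_{k+l}$ arising from the puncture's degree, which lie between the two copies of $\gamma^\circ$ in $\mathcal{G}_{\ell_p,T^\circ}$ and must be shown to map correctly onto the loop region of $\mathcal{G}_{\gamma^{(p)},T}$, including the correct handling of the coefficient monomial's radius-correction factor $y_{\tau_{i_j}^{(p)}}^{-1}$.
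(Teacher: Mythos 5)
Your high-level strategy -- fold the snake graph of $\ell_p$ onto the loop graph, match $\gamma$-symmetric matchings with good matchings, and let the divided-out factor $x(P_{\vert_{\mathcal{G}_{\gamma,i}}})$ account for the collapsed copy -- is the same idea the paper executes. But two concrete points in your write-up fail. First, your ``key structural observation'' is false: the snake graph $\mathcal{G}_{\overset{{}_{\backsim}}{\gamma},T}$ of the hooked arc has $k+l$ tiles (the hook crosses the $l$ arcs incident to $p$ once each, after the $k$ crossings of $\gamma$), whereas $\mathcal{G}_{\ell_p,T^{\circ}}$ has $2k+l$ tiles, so $\mathcal{G}_{\overset{{}_{\backsim}}{\gamma},T} \not\cong \mathcal{G}_{\ell_p,T^{\circ}}$. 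The correct identification, stated at the start of the paper's proof, is that $\mathcal{G}_{\gamma^{(p)},T} = (G_1,\ldots,G_{k+l})^{\bowtie}$ (loop with respect to $G_{k+l}$ and $k$), equivalently $(G_{k+1},\ldots,G_{2k+l})^{\bowtie}$: one entire tail copy of $\mathcal{G}_{\gamma,T^{\circ}}$ is \emph{deleted} and the free end re-glued, not merely overlapped. Relatedly, the ``unique snake-graph extension'' of a good matching lives on $k+l$ tiles; recovering a $\gamma$-symmetric matching of the $(2k+l)$-tile graph requires a genuine unfolding step that inserts extra edges, which your inverse map leaves unspecified.

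Second, and more seriously, the well-definedness of your folding map is precisely the content of the proposition, and your justification for it is wrong. $\gamma$-symmetry constrains $P$ only on $\mathcal{H}_{\gamma,1} \cong \mathcal{H}_{\gamma,2}$, which by definition \emph{omit} the corner vertex $NE(G_k)$ (resp.\ $SW(G_{k+l+1})$) together with its incident edges $N(G_k),E(G_k)$ (resp.\ $S(G_{k+l+1}),W(G_{k+l+1})$). So the two halves of $P$ need not agree near the fold: since $(G_{k-1},G_k,G_{k+1})$ is zig-zag, $P$ contains exactly one of $E(G_k)$, $S(G_k)$, and in the $S(G_k)$ case the naive image of $P$ under the quotient map is not a perfect matching (vertices along the cut get covered by non-corresponding edges from the two halves). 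The paper's proof consists exactly of the case analysis you hoped to sidestep: if $E(G_k)\in P$ set $\Phi(P) = P_{\vert_{(G_1,\ldots,G_{k+l})}}\setminus E(G_k)$ (right/centre cuts); if $S(G_k)\in P$ set $\Phi(P) = P_{\vert_{(G_{k+1},\ldots,G_{2k+l})}}\setminus b$ (left cuts), with $b$ depending on the parity of $l$ -- note the index $i\in\{1,2\}$ in $\overline{x}(P) = x(P)/x(P_{\vert_{\mathcal{G}_{\gamma,i}}})$ switches between the two cases. The monomial identities then require explicit cancellation of the extra edge factors $x(E(G_k))$, resp.\ $x(e_1)x(e_2)$, rather than the ``each overlapping tile counted once'' heuristic. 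You did flag the cut-type/case correspondence as an anticipated obstacle, and that instinct is right; but as written your $\Phi$ is not well-defined, so the proposal identifies the correct framework without supplying the argument that makes it a proof.
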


\begin{proof}
Following the conventions of this section, let us write $G_{\ell_p,T} = (G_1, \ldots, G_{2k +l})$. Note that $G_{\gamma^{(p)}, T}$ can be obtained from $(G_1, \ldots, G_{k + l})$ by creating a loop with respect to $G_{k+l}$ and $G_k$, and can also be obtained from $(G_{k+1}, \ldots, G_{2k + l})$ by creating a loop with respect to $G_{k+1}$ and $G_{k+l+1}$. Furthermore, note that precisely one of the subgraphs $(G_{k-1},G_k,G_{k+1})$ and $(G_{k+l},G_{k+l+1},G_{k+l+2})$ is a zig-zag. Without loss of generality, we will suppose $(G_{k-1},G_k,G_{k+1})$ is zig-zag and that $E(G_k)$ and $S(G_k)$ are boundary edges. Finally, note that it is possible that $\ell_p$ has only $l+2$ intersections with arcs in $T$, in which case $G_{k-1} = \emptyset = G_{k+l+2}$. The proof described below works in the same way for this case too.  \newline \indent

Let $P$ be a $\gamma$-symmetric perfect matching of $G_{\ell_p, T}$. From our assumptions we see that $P$ will contain one of the (boundary) edges $E(G_k)$ or $S(G_k)$. \newline

\noindent \textbf{Case 1}: $P$ involves $E(G_k)$. See Figure \ref{Prop6.6East}. \newline \indent  Note that $P$ involves the edge $E(G_k)$ \textit{if and only if} $$P = E(G_k) \cup P_1 \cup P_1' \cup P_2$$ where \begin{itemize}

\item $P_1$ and $P_1'$ are perfect matchings of $(G_1,\ldots, G_{k-1})$ and $(G_{k+l+2},\ldots, G_{2k+l})$, respectively, such that $P_1 \cong P_1'$,

\item $P_2$ is a perfect matching of $(G_{k+2},\ldots, G_{k+l})$.

\end{itemize}

Similarly, a good matching $P$ of $G_{\gamma^{(p)}, T} = (G_1, \ldots, G_{k + l})^{\bowtie}$ is a right or centre cut at $E(G_k)$ \textit{if and only if} $$P = P_1 \cup P_2$$ where $P_1$ is a perfect matching of $(G_1,\ldots, G_{k-1})$ and $P_2$ is a perfect matching of $(G_{k+2},\ldots, G_{k+l})$. \newline

For the second part of the proposition, recall that $\overline{x}(P) := \frac{x(P)}{x(P_{\vert_{G_{\gamma,1}}})}$. Since $P_{\vert_{G_{\gamma,1}}} = E(G_k) \cup P_1$ and $P_1 \cong P_1'$ then $$\overline{x}(P) = \frac{x(E(G_k))x(P_1)x(P_1')x(P_2)}{x(E(G_k))x(P_1)} = x(P_1')x(P_2) = x(P_1 \cup P_2).$$

Moreover, with respect to the $\gamma$-symmetric perfect matching $P$, a diagonal of a tile in $G_{\gamma,1} := (G_1,\ldots, G_k)$ is positive \textit{if and only if} the diagonal of the corresponding tile in $G_{\gamma,2} := (G_{k+l+1},\ldots, G_{2k+l})$ is positive. Hence $$\overline{y}(P) := \frac{y(P)}{y(P_{\vert_{G_{\gamma,1}}})} = y(P_1\cup P_2).$$

\noindent \textbf{Case 2}: $P$ involves $S(G_k)$. See Figure \ref{Prop6.6South}. \newline \indent  Analogous to above, $P$ involves the edge $S(G_k)$ \textit{if and only if} $$P = S(G_k) \cup P_1 \cup P_1' \cup P_2 \cup e_1 \cup e_2$$ where \begin{itemize}

\item $P_1$ and $P_1'$ are perfect matchings of $(G_1,\ldots, G_{k-1})\setminus E(G_{k-1})$ and $(G_{k+l+2},\ldots, G_{2k+l})\setminus b$, respectively, such that $P_1 \cong P_1'$ (here $b= W(G_{k+l+2})$ if $l$ is odd, and $b= S(G_{k+l+2})$ if $l$ is even),

\item $P_2$ is a perfect matching of $(G_{k+1},\ldots, G_{k+l-1})$.

\item $e_1 = N(G_{k+l+1})$ and $e_2 = S(G_{k+l+1})$ if $l$ is odd, and $e_1 = E(G_{k+l+1})$ and $e_2 = W(G_{k+l+1})$ if $l$ is even.

\end{itemize}

Similarly, a good matching $P$ of $G_{\gamma^{(p)}, T} = (G_1, \ldots, G_{k + l})^{\bowtie}$ is a left cut at $E(G_k)$ \textit{if and only if} $$P = S(G_k) \cup P_1 \cup P_2$$ where $P_1$ is a perfect matching of $(G_1,\ldots, G_{k-1})\setminus E(G_{k-1})$ and $P_2$ is a perfect matching of $(G_{k+1},\ldots, G_{k+l-1})$. \newline

For the second part of the proposition, recall that $\overline{x}(P) := \frac{x(P)}{x(P_{\vert_{G_{\gamma,2}}})}$. Since $P_{\vert_{G_{\gamma,2}}} = P_1' \cup e_1 \cup e_2$ then $$\overline{x}(P) = \frac{x(S(G_k))x(P_1)x(P_1')x(P_2)x(e_1)x(e_2)}{x(P_1')x(e_1)x(e_2)} = x(S(G_k) \cup P_1 \cup P_2).$$

Moreover, as in Case 1, a diagonal of a tile in $G_{\gamma,1} := (G_1,\ldots, G_k)$ is positive \textit{if and only if} the diagonal of the corresponding tile in $G_{\gamma,2} := (G_{k+l+1},\ldots,G_{2k+1})$ is positive. Hence $$\overline{y}(P) := \frac{y(P)}{y(P_{\vert_{G_{\gamma,2}}})} = y(S(G_k) \cup P_1\cup P_2).$$

This completes the proof as any good matching of $G_{\gamma^{(p)}, T} = (G_1, \ldots, G_{k + l})^{\bowtie}$ has a right, left or centre cut at $E(G_k)$.

\end{proof}

\begin{figure}[H]
\begin{center}
\includegraphics[width=13cm]{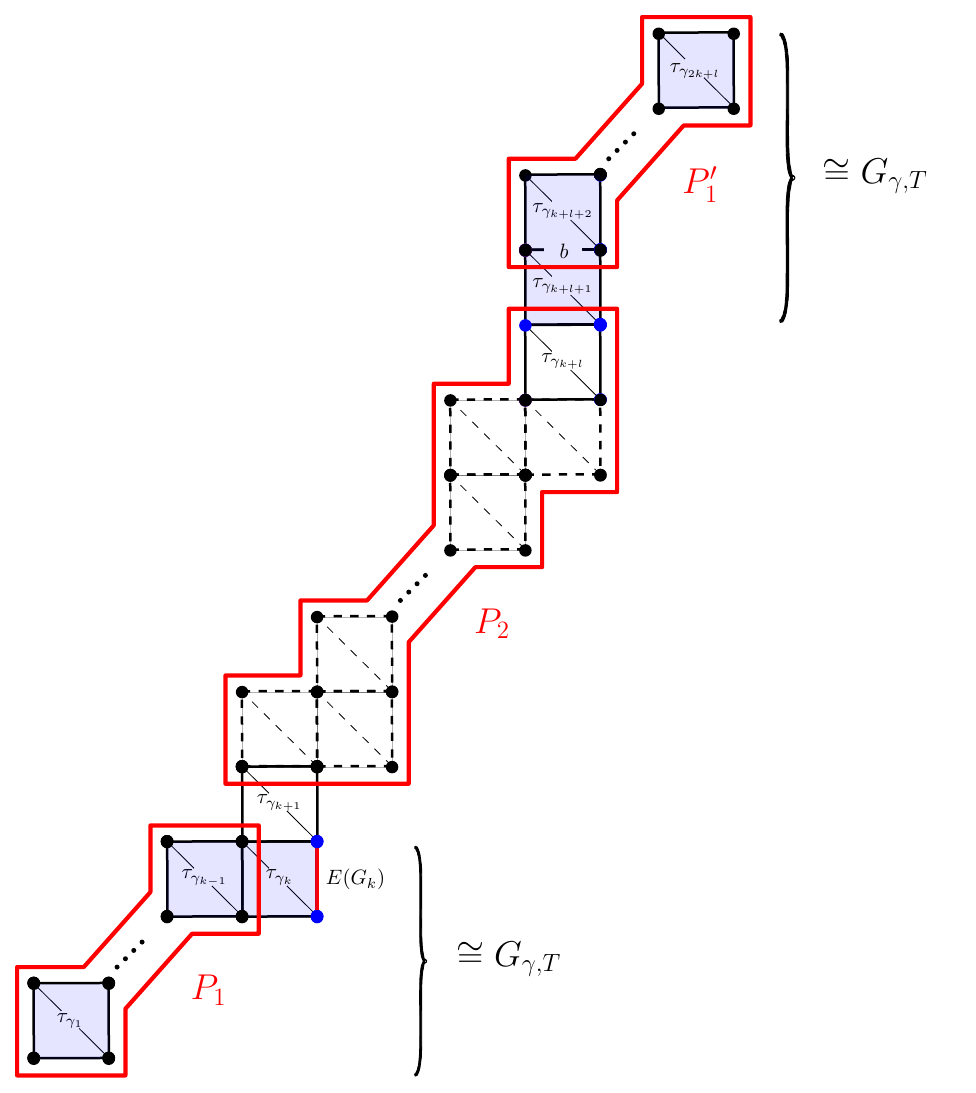}
\caption{A schematic proof of Proposition \ref{singlebijection} -- case 1. Namely, there is a bijection between $\gamma$-symmetric matchings $P$ of the entire snake graph $G_{\ell_p,T} = (G_1, \ldots, G_{2k +l})$ such that $E(G_k) \in P$ and good matchings $P^{\bowtie}$ of the loop graph $G_{\gamma^{(p)},T} = (G_1, \ldots, G_{k +l})^{\bowtie}$ such that $P^{\bowtie}$ has a right or centre cut at $E(G_k)$. In particular, this bijection is given by: $P = P_1\cup \{E(G_k)\} \cup P_2 \cup P_1' \leftrightarrow P^{\bowtie} = P_1\cup P_2$. }
\label{Prop6.6East}
\end{center}
\end{figure}

\begin{figure}[H]
\begin{center}
\includegraphics[width=13cm]{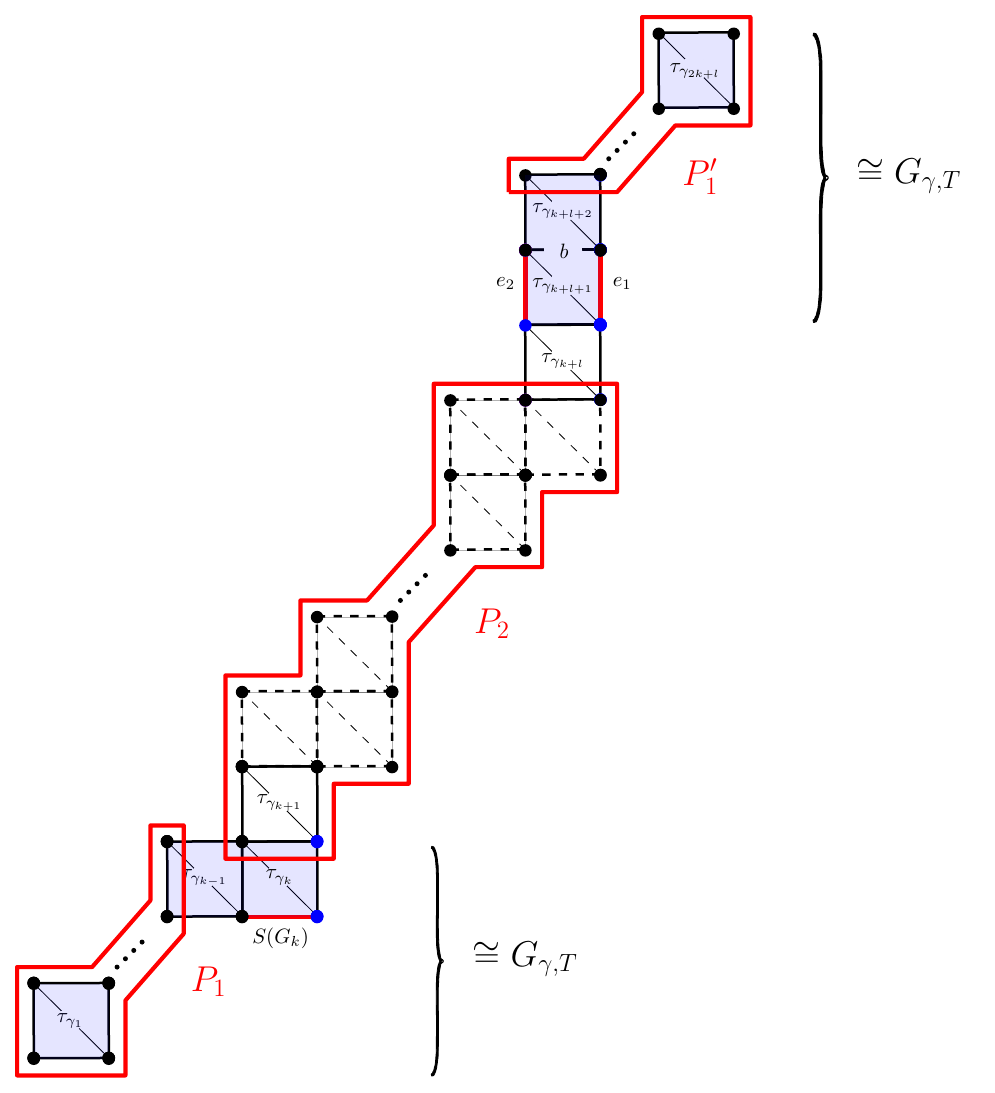}
\caption{A schematic proof of Proposition \ref{singlebijection} -- case 2. Namely, there is a bijection between $\gamma$-symmetric matchings $P$ of the entire snake graph $G_{\ell_p,T} = (G_1, \ldots, G_{2k +l})$ such that $S(G_k) \in P$ and good matchings $P^{\bowtie}$ of the loop graph $G_{\gamma^{(p)},T} = (G_1, \ldots, G_{k +l})^{\bowtie}$ such that $P^{\bowtie}$ has a left cut at $E(G_k)$. In particular, this bijection is given by: $P = P_1\cup \{S(G_k)\} \cup P_2 \cup \{e_1\} \cup \{e_2\} \cup P_1' \leftrightarrow P^{\bowtie} = P_1 \cup \{S(G_k)\} \cup  P_2.$}
\label{Prop6.6South}
\end{center}
\end{figure}

\newpage

The following observation will be helpful in the next section, where we compare good matchings and compatible pairs of $\gamma$-symmetric perfect matchings. It follows directly from the above proof.

\begin{cor}
\label{proof cor} Let $P$ be a $\gamma$-symmetric perfect matching of $G_{\ell_p, T}$. With respect to the set-up described in the opening paragraph of the proof of Proposition \ref{singlebijection}, the following holds. \newline \indent

If $E(G_k) \in P$ then $P$ restricts to a perfect matching on $(G_1,\ldots, G_{k+l})$. Moreover, we get a bijection between $\gamma$-symmetric perfect matchings of $G_{\ell_p,T}$ involving $E(G_k)$ and good matchings of $$G_{\gamma^{(p)},T} = (G_1,\ldots, G_{k+l})^{\bowtie}$$ with right or centre cut at $E(G_k)$ via $$P \mapsto P_{\vert_{(G_1,\ldots,G_{k+l})}}\setminus E(G_k).$$

Similarly, if $S(G_k) \in P$ then $P$ restricts to a perfect matching on $(G_{k+1},\ldots, G_{2k+l})$. Moreover, we get a bijection between $\gamma$-symmetric perfect matchings of $G_{\ell_p,T}$ involving $S(G_k)$ and good matchings of $$G_{\gamma^{(p)},T} = (G_1,\ldots, G_{k+l})^{\bowtie} \cong (G_{k+1},\ldots, G_{2k+l})^{\bowtie}$$ with left cut at $E(G_k)$ via $$P \mapsto P_{\vert_{(G_{k+1},\ldots,G_{2k+l})}}\setminus b$$

\noindent where $b = S(G_{k+l+1})$ if $l$ is odd and $b = W(G_{k+l+1})$ if $l$ is even.

\end{cor}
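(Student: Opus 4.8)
The plan is to extract both bijections directly from the two explicit case decompositions in the proof of Proposition \ref{singlebijection}, simply recording which edges of a $\gamma$-symmetric perfect matching $P$ of $\mathcal{G}_{\ell_p,T}$ survive when $P$ is restricted to the relevant half of the snake graph. Throughout I keep the conventions fixed at the start of that proof: $(G_{k-1},G_k,G_{k+1})$ is zig-zag and $E(G_k), S(G_k)$ are boundary edges, so that $G_{k-1}$ is glued to $G_k$ along $W(G_k)$ and $G_k$ is glued to $G_{k+1}$ along $N(G_k)$.

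First I would treat the case $E(G_k)\in P$. By Case 1 of the proposition, $P$ decomposes uniquely as $P = E(G_k)\cup P_1 \cup P_1' \cup P_2$, where $P_1$ matches $(G_1,\ldots,G_{k-1})$, the mirror copy $P_1'$ matches $(G_{k+l+2},\ldots,G_{2k+l})$, and $P_2$ matches $(G_{k+2},\ldots,G_{k+l})$. Since $P_1'$ is supported on tiles disjoint from $(G_1,\ldots,G_{k+l})$, the edges of $P$ lying in $(G_1,\ldots,G_{k+l})$ are exactly $E(G_k)\cup P_1 \cup P_2$. I would then verify this is a perfect matching of $(G_1,\ldots,G_{k+l})$: the only vertices requiring attention are those along the edge glued to the discarded tile $G_{k+l+1}$, and these are covered by $P_2$ because $G_{k+l}$ is the terminal tile of the subgraph it matches. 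Deleting $E(G_k)$ then returns precisely $P_1 \cup P_2$, which Case 1 of the proposition identifies with the good matching $\Phi(P)$ having a right or centre cut at $E(G_k)$. As the inverse merely re-adjoins $E(G_k)$ together with the mirror copy $P_1'$ of $P_1$, this map is a bijection, being the restriction of $\Phi$ to this case.

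The case $S(G_k)\in P$ is symmetric, now using the second model $(G_{k+1},\ldots,G_{2k+l})^{\bowtie}\cong \mathcal{G}_{\gamma^{(p)},T}$ and Case 2 of the proposition, where $P = S(G_k)\cup P_1 \cup P_1' \cup P_2 \cup e_1 \cup e_2$. Here $S(G_k)$ and $P_1$ are supported on $(G_1,\ldots,G_k)$, so the edges of $P$ lying in $(G_{k+1},\ldots,G_{2k+l})$ are exactly $P_2 \cup e_1 \cup e_2 \cup P_1'$; the vertices on the edge shared with $G_k$ are covered by $P_2$, so this is again a perfect matching. Comparing the parity conventions shows the prescribed edge $b$ coincides with $e_2$ (both equal $S(G_{k+l+1})$ when $l$ is odd and $W(G_{k+l+1})$ when $l$ is even), so deleting $b$ leaves $P_2 \cup e_1 \cup P_1'$. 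Transporting this along the isomorphism $(G_{k+1},\ldots,G_{2k+l})^{\bowtie}\cong(G_1,\ldots,G_{k+l})^{\bowtie}$ recovers the good matching $S(G_k)\cup P_1 \cup P_2 = \Phi(P)$ with a left cut at $E(G_k)$, and bijectivity again follows from that of $\Phi$. Since every good matching of $\mathcal{G}_{\gamma^{(p)},T}$ is a right, centre or left cut at $E(G_k)$, the two maps together reconstitute $\Phi$.

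I expect the only delicate points to be the two perfect-matching verifications at the junction tiles and the identification $b=e_2$: both hinge on keeping careful track of the parity of $l$ and of the fixed zig-zag orientation of $(G_{k-1},G_k,G_{k+1})$, which together dictate whether the boundary edges $e_1,e_2,b$ are the North/South or the East/West edges of $G_{k+l+1}$. Everything else is immediate bookkeeping on the decompositions already established in Proposition \ref{singlebijection}.
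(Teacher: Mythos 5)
Your proposal is correct and takes essentially the same route as the paper: the paper gives no separate argument for Corollary \ref{proof cor} beyond remarking that it follows directly from the proof of Proposition \ref{singlebijection}, and your write-up is precisely that derivation, reading the two restriction maps off the Case 1 and Case 2 decompositions and deducing bijectivity from the uniqueness of those decompositions. In particular, you correctly identify the corollary's edge $b$ with the edge $e_2$ of Case 2 (not to be confused with the different edge on $G_{k+l+2}$ also denoted $b$ in that proof), which is the one genuinely delicate bookkeeping point.
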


\subsection{Compatible pairs of $\gamma$-symmetric perfect matchings}

\begin{defn}

Let $\gamma^{(pq)}$ be a tagged arc which is notched at puncture $p$ and $q$ (we allow $p = q$). We say $\gamma^{(pq)} $ is a \textbf{\textit{doubly notched arc}} at $p $ and $q $. We define $\ell_p$ and $\ell_q$ as in Theorem \ref{singlynotched}, however, note that when $ p= q $ then $ \ell_p$ and $ \ell_q$ are not strictly arcs as they are self-intersecting. \end{defn}

\begin{defn}
\label{compatiblepair}
Let $\gamma^{(pq)} $ be a doubly notched arc and let $ P_p$ and $ P_q$ be $ \gamma $-symmetric perfect matchings of $ G_{\ell_p, T^{\circ}} $ and $ G_{\ell_q,  T^{\circ}} $, respectively. \newline \indent The pair $ (P_p, P_q) $ is called \textit{\textbf{$\gamma$-compatible}} if the following holds for some $ i,j \in \{1,2\}$:

\begin{itemize}

\item ${P_p}_{\vert_{G_{\gamma, i}}}$ and ${P_q}_{\vert_{G_{\gamma, j}}}$ are perfect matchings of $G_{\gamma, i} \subset G_{\ell_p, T^{\circ}}$ and $G_{\gamma, j} \subset G_{\ell_q, T^{\circ}}$, respectively,

\item and ${P_p}_{\vert_{G_{\gamma, i}}} \cong {P_q}_{\vert_{G_{\gamma, j}}}$ with respect to the canonical isomorphism $G_{\gamma, i} \cong G_{\gamma, j}$.
 
\end{itemize} 

\end{defn}

\begin{defn}

Let $(P_p,P_q)$ be a $\gamma$-compatible pair of $\gamma$-symmetric perfect matchings of $(G_{\ell_p,T^{\circ}},G_{\ell_q,T^{\circ}})$. The associated \textit{\textbf{cluster monomial}} and \textit{\textbf{coefficient monomial}}, are defined, respectively, as follows: $$\overline{\overline{x}}(P_p ,P_q ) := \frac{x(P_p )x(P_q )}{x({P_p }_{\vert_{G_{\gamma, i }}})^3} \hspace{10mm} \text{and} \hspace{10mm} \overline{\overline{y}}(P_p ,P_q ) := \frac{y(P_p )y(P_q )}{y({P_p }_{\vert_{G_{\gamma, i }}})^3}.$$

Where the index $ i \in \{1,2\} $ is chosen such that $ {P_p}_{\vert_{G_{\gamma, i}}} \cong { P_q}_{\vert_{G_{\gamma, j }}} $ for some $ j \in \{1,2\} $, as in Definition \ref{compatiblepair}. This is well defined by [Lemma 12.4, \cite{musiker2011positivity}].

\end{defn}

\begin{thm}[Theorem 4.20, \cite{musiker2011positivity}]
\label{doublynotched}
Let $(S,M)$ be a bordered surface with punctures $p$ and $q$, that is not twice punctured and closed. Let $T^{\circ}$ be an ideal triangulation with corresponding tagged triangulation $T$. Let $\mathcal{A}$ be the associated cluster algebra with principal coefficients with respect to $\Sigma_T = (\mathbf{x}_T,\mathbf{y}_T,B_T)$. Suppose $\gamma^{(pq)}$ is a doubly notched arc at $p$ and $q$ whose underlying plain arc $\gamma$ is not in $T^{\circ}$, and that neither $p$ nor $q$ are the puncture of a self-folded triangle in $T$. Then the Laurent expansion of $x_{\gamma^{(pq)}} \in \mathcal{A}$ with respect to $\Sigma_T$ is given by:

\begin{equation}
 x_{(\gamma^{(pq)})} = \frac{\text{cross}(\gamma,T^{\circ})^3}{\text{cross}(\ell_p,T^{\circ})\text{cross}(\ell_q,T^{\circ})} {\displaystyle \sum_{(P_p,P_q)} \overline{\overline{x}}(P_p,P_q)\overline{\overline{y}}(P_p,P_q)}
\end{equation}

Where the sum is over all $\gamma$-compatible pairs of $\gamma$-symmetric perfect matchings $(P_p,P_q)$ of the pair of snake graphs $(G_{\ell_p, T^{\circ}},G_{\ell_q, T^{\circ}})$.

\end{thm}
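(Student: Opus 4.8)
The plan is to reduce Theorem \ref{doublynotched} to the singly notched expansion of Theorem \ref{singlynotched}, which I treat as established. The reduction rests on an algebraic relation among the cluster variables attached to the four tagged arcs sharing the underlying curve of $\gamma$, namely $\gamma$, $\gamma^{(p)}$, $\gamma^{(q)}$ and $\gamma^{(pq)}$: one expects an identity expressing the product $x_{\gamma^{(p)}}x_{\gamma^{(q)}}$ as $x_{\gamma^{(pq)}}x_\gamma$, possibly up to a monomial in the coefficients, coming from the behaviour of tagged arcs under tag-reversal together with the exchange relations of the cluster algebra. As a sanity check, the prefactors are consistent with such a relation: the product of the two singly notched prefactors $cross(\gamma,T^{\circ})/cross(\ell_p,T^{\circ})$ and $cross(\gamma,T^{\circ})/cross(\ell_q,T^{\circ})$ equals $cross(\gamma,T^{\circ})^{3}/(cross(\ell_p,T^{\circ})\,cross(\ell_q,T^{\circ}))$ times $1/cross(\gamma,T^{\circ})$, i.e. the doubly notched prefactor times the prefactor of $x_\gamma$. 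It is precisely here that the hypothesis ``$(S,M)$ not twice-punctured and closed'' should enter, since in the excluded case one expects the relation to acquire a closed-curve correction term and the clean reduction to break down.

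First I would substitute the two singly notched formulae from Theorem \ref{singlynotched} into this relation. Multiplying the expansions of $x_{\gamma^{(p)}}$ and $x_{\gamma^{(q)}}$ yields a double sum indexed by all pairs $(P_p,P_q)$ of $\gamma$-symmetric perfect matchings. By [Lemma 12.4, \cite{musiker2011positivity}] each such matching restricts to a genuine perfect matching of a well-defined central copy $\mathcal{G}_{\gamma,i}\cong\mathcal{G}_{\gamma,T^{\circ}}$. I would then reorganise the product $x(P_p)x(P_q)$ so that the shared central factor appears with the cubed power built into $\overline{\overline{x}}$ and $\overline{\overline{y}}$: when the two central restrictions coincide under the canonical isomorphism this is exactly the $\gamma$-compatibility of Definition \ref{compatiblepair}, and the surviving monomials collapse to $\overline{\overline{x}}(P_p,P_q)\,\overline{\overline{y}}(P_p,P_q)$ with the asserted normalisation $cross(\gamma,T^{\circ})^{3}/(cross(\ell_p,T^{\circ})\,cross(\ell_q,T^{\circ}))$.

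The hard part will be the combinatorial identity underlying the division by $x_\gamma$. Since $x_\gamma$ is itself a sum over matchings of $\mathcal{G}_{\gamma,T^{\circ}}$, verifying $x_{\gamma^{(p)}}x_{\gamma^{(q)}}=x_{\gamma^{(pq)}}x_\gamma$ at the level of the formulae amounts to exhibiting a weight- and coefficient-preserving bijection between, on one side, pairs consisting of a $\gamma$-compatible pair together with a free central matching, and, on the other, arbitrary pairs $(P_p,P_q)$ of $\gamma$-symmetric matchings; intuitively the free central matching records how the two central restrictions of an arbitrary (incompatible) pair differ, while the compatible pair records their common part. Checking that this correspondence respects both the weight monomials and the positive-diagonal coefficient monomials simultaneously — keeping the diagonal bookkeeping on all three central copies consistent — is the most delicate step and leans heavily on the structure of $\gamma$-symmetric matchings from \cite{musiker2011positivity}. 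Within the present paper, however, I would not carry out this cancellation directly; rather, having recorded Theorem \ref{doublynotched}, I would mirror Proposition \ref{singlebijection} by constructing a weight- and coefficient-preserving bijection between $\gamma$-compatible pairs and good matchings of the loop graph $\mathcal{G}_{\gamma^{(pq)},T}$, thereby transporting the formula into the loop-graph language required for Theorem \ref{loopexpansion}.
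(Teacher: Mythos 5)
The first thing to note is that the paper does not prove Theorem \ref{doublynotched} at all: it is imported verbatim as [Theorem 4.20, \cite{musiker2011positivity}], and the only argument the paper supplies in its vicinity is Proposition \ref{doublebijection}, the weight- and coefficient-preserving bijection between $\gamma$-compatible pairs and good matchings of $\mathcal{G}_{\gamma^{(pq)},T}$, which is what feeds into Theorem \ref{loopexpansion}. Your closing paragraph lands exactly where the paper does --- record the theorem as external input and prove the bijection mirroring Proposition \ref{singlebijection} --- so as a proposal for this paper your treatment is the right one. Moreover, your sketch of how the external result is obtained is faithful in outline to Musiker--Schiffler--Williams' actual strategy: the shape of their formula (the cube in $\overline{\overline{x}}$ and $\overline{\overline{y}}$, and the prefactor $cross(\gamma,T^{\circ})^{3}/\bigl(cross(\ell_p,T^{\circ})\,cross(\ell_q,T^{\circ})\bigr)$) does come precisely from the relation $x_{\gamma^{(pq)}} = x_{\gamma^{(p)}}x_{\gamma^{(q)}}/x_{\gamma} = x_{\ell_p}x_{\ell_q}/x_{\gamma}^{3}$, and your prefactor consistency check is correct.

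Two caveats on the sketch itself. First, the identity $x_{\gamma^{(p)}}x_{\gamma^{(q)}} = x_{\gamma}\,x_{\gamma^{(pq)}}$ holds on the nose with principal coefficients (no coefficient-monomial correction; compare the clean relation $x_{\ell_p} = x_{\gamma}x_{\gamma^{(p)}}$ used in the introduction), but it does not follow from ``tag-reversal together with the exchange relations'' as you suggest: in \cite{musiker2011positivity} it is established via Fomin--Thurston's (laminated) lambda-length machinery \cite{fomin2018cluster}, and together with the divisibility/cancellation bijection you defer, it constitutes the genuinely hard content of their Section 12 --- your proposal asserts both rather than supplying them, which you do flag honestly. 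Second, your diagnosis of the twice-punctured closed exclusion is off: it is not that the relation is known to acquire a closed-curve correction term in that case; rather, the lambda-length argument establishing the clean identity is unavailable there, and --- as the introduction of this paper states explicitly --- whether the expansion formula holds on a twice-punctured closed surface is simply open, not corrected.
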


\begin{prop}
\label{doublebijection}
Let $T$ be an ideal triangulation and let $\gamma^{(pq)}$ be a doubly notched arc at $p$ and $q$ whose underlying plain arc $\gamma$ is not in $T$. Furthermore, suppose that neither $p$ nor $q$ are the puncture of a self-folded triangle in $T$. Then there exists a bijection 

\begin{align*}
 \Phi \hspace{1mm}: \hspace{2mm} &\Bigg\{ \substack{\text{ \normalsize $\gamma$-compatible pairs of $\gamma$-symmetric} \\ \text{ \normalsize perfect matchings $(P_p,P_q)$}\\ \text{ \normalsize of $(G_{\ell_p, T},G_{\ell_p, T})$}}\Bigg\} \hspace{5mm} \longrightarrow \hspace{5mm} \Big\{ \text{Good matchings of $G_{\gamma^{(pq)}, T}$}\Big\}
\end{align*}

such that $$\overline{\overline{x}}(P_p,P_q) = x(\Phi(P_p,P_q)) \hspace{5mm} and \hspace{5mm} \overline{\overline{y}}(P_p,P_q) = y(\Phi(P_p,P_q))$$ for all $\gamma$-compatible pairs of $\gamma$-symmetric perfect matchings $(P_p,P_q)$ of $(G_{\ell_p, T},G_{\ell_p, T})$.

\end{prop}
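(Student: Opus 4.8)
The plan is to build $\Phi$ by treating each notched endpoint independently, reusing the single-notch machinery of Proposition \ref{singlebijection} and its refinement Corollary \ref{proof cor}. Recall that $\mathcal{G}_{\gamma^{(pq)},T}$ is the loop graph built from the doubly-hooked arc $\overset{{}_{\backsim}}{\gamma^{(pq)}}$, which has loops at \emph{both} ends; the sub snake graph sandwiched between the two cuts is a copy of $\mathcal{G}_{\gamma,T}$. A $\gamma$-compatible pair $(P_p,P_q)$ packages the data of how each hook is matched, subject to the compatibility condition that the two induced matchings on the central copy of $\mathcal{G}_{\gamma,T}$ agree. The loop graph on the other side encodes \emph{exactly} this: a good matching of $\mathcal{G}_{\gamma^{(pq)},T}$ is a matching of the central $\mathcal{G}_{\gamma,T}$ together with a choice of left/right/centre cut at each of the two cuts.

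Concretely, I would first fix the conventions exactly as in Proposition \ref{singlebijection}, writing $\mathcal{G}_{\ell_p,T}=(G_1,\ldots,G_{2k+l_p})$ and $\mathcal{G}_{\ell_q,T}=(G_1,\ldots,G_{2k+l_q})$, and identifying the central sub snake graph $\mathcal{G}_{\gamma,T}$ with tiles indexed $k_1,\ldots,k_2$ inside $\mathcal{G}_{\gamma^{(pq)},T}$. The key point is that Corollary \ref{proof cor} already gives, for the puncture $p$, a bijection between $\gamma$-symmetric matchings $P_p$ and good matchings of a \emph{single}-loop graph, together with the restriction $P_p\mapsto {P_p}_{\vert_{\mathcal{G}_{\gamma,i}}}$ recording the central matching. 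I would define $\Phi(P_p,P_q)$ to be the matching of $\mathcal{G}_{\gamma^{(pq)},T}$ whose cut-behaviour at the $p$-end is dictated by whether $P_p$ uses $E(G_k)$ or $S(G_k)$ (via Corollary \ref{proof cor}), whose cut-behaviour at the $q$-end is dictated symmetrically by $P_q$, and whose restriction to the central $\mathcal{G}_{\gamma,T}$ is the common matching ${P_p}_{\vert_{\mathcal{G}_{\gamma,i}}}\cong {P_q}_{\vert_{\mathcal{G}_{\gamma,j}}}$ guaranteed by $\gamma$-compatibility. That this is a well-defined perfect matching, and is \emph{good} at both cuts, is exactly the content of the two halves of Corollary \ref{proof cor} applied independently; the $\gamma$-compatibility condition is precisely what makes the two prescribed central matchings coincide, so the pieces glue consistently. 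The inverse map reads off the cut type at each end of a good matching and reassembles the two $\gamma$-symmetric matchings, again using Corollary \ref{proof cor} in reverse.

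For the weight and coefficient identities I would compute $\overline{\overline{x}}$ by substituting the Case~1/Case~2 factorizations from the proof of Proposition \ref{singlebijection}. Writing $Q:={P_p}_{\vert_{\mathcal{G}_{\gamma,i}}}\cong {P_q}_{\vert_{\mathcal{G}_{\gamma,j}}}$ for the common central matching, the single-notch computation shows $x(P_p)=x(Q)\cdot x(A_p)$ and $x(P_q)=x(Q)\cdot x(A_q)$, where $A_p,A_q$ are the ``outer'' hook contributions, so that
\begin{equation*}
\overline{\overline{x}}(P_p,P_q)=\frac{x(P_p)x(P_q)}{x(Q)^3}=\frac{x(A_p)x(A_q)}{x(Q)}=x(\Phi(P_p,P_q)),
\end{equation*}
the last equality because $\Phi(P_p,P_q)$ consists of the two hook contributions together with a single central copy of $Q$, and the $x(Q)$ in the denominator exactly cancels the fact that the central matching is counted once rather than twice. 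The coefficient identity $\overline{\overline{y}}(P_p,P_q)=y(\Phi(P_p,P_q))$ follows by the identical bookkeeping, using (as in Proposition \ref{singlebijection}) that a diagonal in the central $\mathcal{G}_{\gamma,i}$ is positive for $P_p$ iff the corresponding diagonal is positive for $P_q$, so the positive-diagonal data agree on the overlap.

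The main obstacle I expect is purely combinatorial bookkeeping rather than conceptual: one must carefully track the parity-dependent edge labels ($b=S$ vs.\ $W$, and the $e_1,e_2$ in Case~2) at \emph{both} ends simultaneously and verify that the independent choices at the two cuts never interact badly, especially in the degenerate case $p=q$ where $\ell_p$ and $\ell_q$ self-intersect and where the two hooks wind around the \emph{same} puncture. In that degenerate case I would check directly that the two loops of $\mathcal{G}_{\gamma^{(pq)},T}$ are still disjoint as subgraphs (they are glued to tiles at opposite ends of the central snake graph), so the factorization of $\Phi$ into two independent single-notch problems remains valid, and the exclusion of twice-punctured closed surfaces in Theorem \ref{doublynotched} ensures the central snake graph $\mathcal{G}_{\gamma,T}$ is genuinely nonempty and the two cuts are distinct.
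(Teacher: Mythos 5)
Your proposal is correct and follows essentially the same route as the paper: the paper likewise splits the $\gamma$-compatible pairs into the four classes determined by whether $E(G_k)$ or $S(G_k)$ (resp.\ $E(H_k)$ or $S(H_k)$) lies in $P_p$ (resp.\ $P_q$), glues $P_p$ and $P_q$ along the common central matching supplied by $\gamma$-compatibility, and invokes Corollary \ref{proof cor} at each cut together with the weight and coefficient computations from Proposition \ref{singlebijection}. Your explicit cancellation $\overline{\overline{x}}(P_p,P_q)=x(A_p)x(A_q)/x(Q)$ simply spells out bookkeeping the paper leaves implicit.
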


\begin{proof}

Let us write $G_{\ell_p,T} = (G_1, \ldots, G_{2k +l_p})$ and $G_{\ell_q,T} = (H_1, \ldots, H_{2k + l_q})$. Furthermore, following the same reasoning used in the proof of Proposition \ref{singlebijection}, without loss of generality we will suppose both $(G_{k-1},G_k,G_{k+1})$ and $(H_{k-1},H_k,H_{k+1})$ are zig-zag and that $E(G_k)$,$S(G_k)$,$E(H_k)$,$S(H_k)$ are boundary edges.

The collection of all $\gamma$-compatible pairs of $\gamma$-symmetric perfect matchings $(P_p,P_q)$ can be decomposed into the disjoint union of the following four classes: \begin{enumerate}

\item $E(G_k) \in P_p$ and $E(H_k) \in P_q$.

\item $S(G_k) \in P_p$ and $E(H_k) \in P_q$.

\item $E(G_k) \in P_p$ and $S(H_k) \in P_q$.

\item $S(G_k) \in P_p$ and $S(H_k) \in P_q$.

\end{enumerate}

For each class there exist $i,j \in \{1,2\}$ such that $P_p$ and $P_q$ restrict to perfect matchings on $G_{\gamma,i} \subset G_{\ell_p,T}$ and $G_{\gamma,j} \subset G_{\ell_q,T}$, respectively, for all $(P_p,P_q)$ in the chosen class. Moreover, by definition of $\gamma$-compatibility, these restrictions are isomorphic with respect to the isomorphism $G_{\gamma,i} \cong G_{\gamma,j}$.

This means that $P_p$ and $P_q$ can be glued along $G_{\gamma,i}$ and $G_{\gamma,j}$. Applying Corollary \ref{proof cor}, for each of the four classes above, we get a bijection between the following classes of good matchings $P$ of $G_{\gamma,T}$:

\begin{enumerate}

\item $P$ has a right or centre cut with respect to $E(G_k)$ and a right or centre cut with respect to $E(H_k)$.

\item $P$ has a left cut with respect to $E(G_k)$ and a right or centre cut with respect to $E(H_k)$.

\item $P$ has a right or centre cut with respect to $E(G_k)$ and a left cut with respect to $E(H_k)$.

\item $P$ has a left cut with respect to $E(G_k)$ and a left cut with respect to $E(H_k)$.

Finally, coupling this with Proposition \ref{singlebijection}, the bijection $(P_p,P_q) \mapsto P$ described above satisfies $$\overline{\overline{x}}(P_p,P_q) = x(P) \hspace{10mm} \text{and} \hspace{10mm} \overline{\overline{y}}(P_p,P_q) = y(P).$$

\end{enumerate}

\end{proof}

\subsection{Proof of Theorem \ref{loopexpansion} when $\gamma^{\circ} \notin T$}

We now prove the Main Theorem \ref{loopexpansion} in the case when the underlying plain arc $\gamma^{\circ}$ is not in the initial triangulation $T$.

\begin{proof}[Proof of Theorem \ref{loopexpansion} when $\gamma^{\circ} \notin T$.]\leavevmode\par \vspace{3mm}

When $\gamma$ is a plain arc then $G_{\gamma,T}$ is just a snake graph and equation (\ref{loopexpansionformula}) reduces to [Theorem 4.10. \cite{musiker2011positivity}]. 

When $\gamma = \gamma^{(p)}$ is a singly notched arc then equation (\ref{loopexpansionformula}) follows from Theorem \ref{singlynotched}, Proposition \ref{singlebijection} and the equality $$\text{cross}(\gamma^{(p)},T^{\circ}) = \frac{\text{cross}(\ell_p,T^{\circ})}{\text{cross}(\gamma^{\circ},T^{\circ})}.$$

Finally, when $\gamma = \gamma^{(pq)}$ is a doubly notched arc then equation (\ref{loopexpansionformula}) follows from Theorem \ref{doublynotched}, Proposition \ref{doublebijection} and the equality $$\text{cross}(\gamma^{(pq)},T) = \frac{\text{cross}(\ell_p,T^{\circ})\text{cross}(\ell_q,T^{\circ})}{\text{cross}(\gamma^{\circ},T^{\circ})^3}.$$

\end{proof}

\subsection{Proof of Theorem \ref{loopexpansion} when $\gamma^{\circ} \in T$}

In this section we prove the Main Theorem \ref{loopexpansion} in the case when the underlying plain arc $\gamma^{\circ}$ is contained in the initial triangulation $T$.
 
Recall that if $\gamma$ is singly notched at a puncture $p$ then $x_{\gamma}x_{\gamma^{\circ}} = \ell_p$ where $\ell_p$ is the plain arc enclosing $\gamma$ in a monogon with puncture $p$.
Moreover, since Musiker, Schiffler and Williams proved that Theorem 5.7 holds for $\ell_p$, then, since $x_{\gamma^{\circ}}$ is an initial cluster variable when $\gamma^{\circ} \in T$, one immediately gets Theorem 5.7 also holds for $x_{\gamma} = \frac{\ell_p}{x_{\gamma^{\circ}}}$ [Remark 4.17, \cite{musiker2011positivity}]. Therefore, the only remaining case to consider is when $\gamma$ is doubly notched.

\newpage

\begin{prop}[Proposition 4.21, \cite{musiker2011positivity}]\label{DoubleUnderlyingExpansion}

Let $(S,M)$ be a bordered surface with punctures $p$ and $q$, that is not twice punctured and closed. Let $T^{\circ}$ be an ideal triangulation with corresponding tagged triangulation $T$. Let $\mathcal{A}$ be the associated cluster algebra with principal coefficients with respect to $\Sigma_T = (\mathbf{x}_T,\mathbf{y}_T,B_T)$. Suppose $\gamma^{(pq)}$ is a doubly notched arc at $p$ and $q$ whose underlying plain arc $\gamma$ is an arc in $T^{\circ}$. Then the Laurent expansion of $x_{\gamma^{(pq)}} \in \mathcal{A}$ with respect to $\Sigma_T$ is given by:

$$x_{\gamma^{(pq)}} = \frac{x_{\gamma^{(p)}}x_{\gamma^{(q)}}y_{\gamma} + \Bigg(1- \displaystyle \prod_{\tau \in T} y_{\tau}^{e_p(\tau)}\Bigg)\Bigg(1- \displaystyle \prod_{\tau \in T} y_{\tau}^{e_q(\tau)}\Bigg)}{x_{\gamma}}$$

where $\gamma^{(p)}$ and $\gamma^{(q)}$ are the singly notched arcs\footnotemark{} at $p$ and $q$ respectively whose underlying plain arc is    $e_p(\tau)$ and $e_q(\tau)$ denote the number of endpoints of $\tau \in T$ incident to $p$ and $q$, respectively.

\end{prop}

\footnotetext{Note that if $\gamma$ is a loop then $\gamma^{(p)}$ and $\gamma^{(q)}$
will not actually be arcs. Nevertheless, using equation~\ref{loopexpansionformula}
to assign a Laurent polynomial to these ``arcs'', Musiker, Schiffler and Williams
show Proposition~\ref{DoubleUnderlyingExpansion} still holds in this setting.}

\begin{figure}[H]
\begin{center}
\includegraphics[width=14cm]{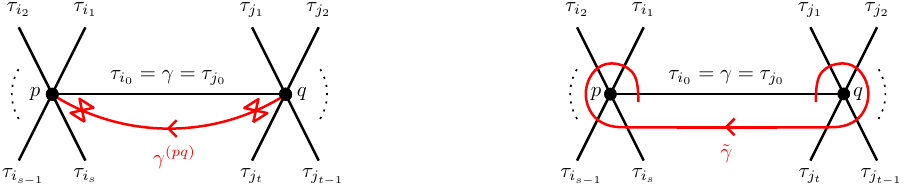}
\caption{On the left we illustrate the generic configuration of an ideal triangulation $T$ and a doubly notched arc $\gamma^{(pq)}$ such that the underlying plain arc $\gamma$ is in $T$. On the right we illustrate an associated hooked arc $\tilde{\gamma}$ of $\gamma^{(pq)}$.}
\label{GenericDoubleUnderlyingSurface}
\end{center} \vspace{-5mm}
\end{figure}

\begin{proof}[Proof of Theorem \ref{loopexpansion} when $\gamma^{\circ} \in T$.]
\leavevmode\par

With respect to the generic configuration labelled in Figure \ref{GenericDoubleUnderlyingSurface}, Proposition \ref{DoubleUnderlyingExpansion} may be equivalently stated as follows, where the summations below are over all perfect matchings of the snake graphs $G_{\ell_p, T^{\circ}}$ and $G_{\ell_q, T^{\circ}}$, respectively: \vspace{-5mm}

\begin{align*}
x_{\gamma^{(pq)}} &=
\frac{
x_{\gamma^{(p)}}x_{\gamma^{(q)}}y_{\gamma}
+ \displaystyle \Bigg(1- \prod_{k=0}^s y_{\tau_{i_k}}\Bigg)
  \Bigg(1- \prod_{k=0}^t y_{\tau_{j_k}}\Bigg)
}{x_{\gamma}} \\
&=
\frac{
\Bigg(\displaystyle \sum_{\substack{P \text{ of }\\ G_{\ell_p, T^{\circ}}}} \hspace{-2mm} x(P)y(P)\hspace{-0.5mm}\Bigg)\hspace{-1mm}
\Bigg(\displaystyle \sum_{\substack{P \text{ of }\\ G_{\ell_q, T^{\circ}}}} \hspace{-2mm} x(P)y(P)\hspace{-0.5mm}\Bigg)
y_{\gamma}
+ \Bigg(1- \prod_{k=0}^s y_{\tau_{i_k}}\hspace{-0.5mm}\Bigg)\hspace{-1mm}
  \Bigg(1- \prod_{k=0}^t y_{\tau_{j_k}}\hspace{-0.5mm}\Bigg) \prod_{k=0}^s x_{\tau_{i_k}}
\prod_{k=0}^t x_{\tau_{j_k}}}{x_{\gamma}\cdot \displaystyle \left( \prod_{k=0}^s x_{\tau_{i_k}}\right)
\left( \prod_{k=0}^t x_{\tau_{j_k}}\right)}
\end{align*}

where we used $$x_{\gamma^{(p)}} =  \displaystyle \frac{1}{\text{cross}(\gamma^{(p)},T^{\circ})} \left( \sum_{P \text{ of } G_{\ell_p, T^{\circ}}} \hspace{-2mm} x(P) y(P) \right)
= \frac{\displaystyle \sum_{P \text{ of } G_{\ell_p, T^{\circ}}} \hspace{-2mm} x(P) y(P)}{\displaystyle \prod_{k=0}^s x_{\tau_{i_k}}}.$$ and the analogous expression for $x_{\gamma^{(q)}}$.

Therefore, to prove Theorem \ref{loopexpansion} in this setting, since $\text{cross}(\gamma^{(pq)},T^{\circ}) := x_{\gamma}\cdot \displaystyle \left( \prod_{k=1}^s x_{\tau_{i_k}}\right)
\left( \prod_{k=1}^t x_{\tau_{j_k}}\right)$, it suffices to show that 

\begin{align}
\displaystyle & x_{\gamma}^2\left(\sum_{\substack{P \text{ of }\\ G_{\gamma^{(pq)}, T}}} x(P)y(P) \right) - \left(\displaystyle \sum_{\substack{P \text{ of }\\ G_{\ell_p, T^{\circ}}}} x(P)y(P)\right) 
\left(\displaystyle \sum_{\substack{P \text{ of }\\ G_{\ell_q, T^{\circ}}}} x(P)y(P) \right)
y_{\gamma} \nonumber \\
 &= \left(\left(1  \ \ + \ \  \prod_{k=0}^s y_{\tau_{i_k}} \prod_{k=0}^t y_{\tau_{j_k}}\right) \ \ - \ \ \left(\prod_{k=0}^s y_{\tau_{i_k}} \ \ + \ \ \prod_{k=0}^t y_{\tau_{j_k}}\right)\right)\left(\displaystyle \prod_{k=0}^s x_{\tau_{i_k}}\right)
\left( \prod_{k=0}^t x_{\tau_{j_k}}\right) \label{rephrasedprop4.21}
\end{align}

where $G_{\gamma^{(pq)}, T}$ is the loop graph of $\gamma^{(pq)}$ with respect to $T$, and the associated sum is over all good matchings $P$ of $G_{\gamma^{(pq)}, T}$. \newline

In view of this, let $P_1$ and $P_2$ be any two perfect matchings of $ G_{\ell_p, T^{\circ}}$ and $ G_{\ell_q, T^{\circ}}$, respectively. Furthermore, as in Figure \ref{GenericDoubleUnderlyingGraph}, let $e_1$ be the unique shared edge between tiles $T_{\tau_{j_t}}$ and $T_{\tau_{i_s}}$ in $G_{\gamma^{(pq)}, T}$, and let $e_2$ and $e_3$ be the glued edges of tiles $T_{\tau_{j_1}}$ and $T_{\tau_{i_1}}$ in $G_{\gamma^{(pq)}, T}$, respectively.

Then $(P_1,P_2)$ naturally descends to a good matching $\overline{(P_1,P_2)}$ of $G_{\gamma^{(pq)}, T}$ so long as the following two conditions are satisfied: $e_1 \in P_1$ or $e_1 \in P_2$; and $e_2 \in P_1$ or $e_3 \in P_2$. Indeed, note that:

\begin{itemize}

\item if $e_1 \in P_1$ and $e_1 \notin P_2$ then $e_3 \in P_2$, hence $\overline{(P_1,P_2)} := P_1\setminus\{e_1\} \cup P_2\setminus \{e_3\}$ is a good matching of $G_{\gamma^{(pq)}, T}$. On the other hand, if $e_1 \in P_2$ and $e_1 \notin P_1$ then $e_2 \in P_1$, hence $\overline{(P_1,P_2)} := P_1\setminus\{e_2\} \cup P_2\setminus \{e_1\}$ is a good matching of $G_{\gamma^{(pq)}, T}$.

\item In a similar fashion, if $e_2 \in P_1$ and $e_3 \notin P_2$ then $e_1 \in P_2$, hence $\overline{(P_1,P_2)} := P_1\setminus\{e_2\} \cup P_2\setminus \{e_1\}$ is a good matching of $G_{\gamma^{(pq)}, T}$. Analogously, if $e_3 \in P_2$ and $e_2 \notin P_1$ then $e_1 \in P_1$, hence $\overline{(P_1,P_2)} := P_1\setminus\{e_1\} \cup P_2\setminus \{e_3\}$ is a good matching of $G_{\gamma^{(pq)}, T}$.

\end{itemize}

Moreover, with respect to the orientation on diagonals in $G_{\gamma^{(pq)}, T}$ induced by $(\overline{P_1,P_2})$, in all four cases above, the diagonal of the first tile is negatively oriented, and the diagonal of the last tile is positively oriented. Therefore, since $e_1$, $e_2$, and $e_3$ are all labelled by $\gamma$, then we have 

\begin{align}
\label{productofpmsums}
x_{\gamma}^2 \cdot x(\overline{P_1,P_2})y(\overline{P_1,P_2}) =  x(P_1)x(P_2)y(P_1)y(P_2) \cdot y_{\gamma}.
\end{align}

It remains to consider the two cases when: $e_1 \notin P_1$ and $e_1 \notin P_2$; or $e_2 \notin P_1$ and $e_3 \notin P_2$. Note that, as utilised above, there is only one perfect matching $P_1'$ of $G_{\ell_p, T^{\circ}}$ such that $e_1 \notin P_1'$, and only one perfect matching $P_2'$ of $G_{\ell_p, T^{\circ}}$ such that $e_1 \notin P_2'$. Likewise, there is only one perfect matching $P_1''$ of $G_{\ell_p, T^{\circ}}$ such that $e_2 \notin P_1''$, and only one perfect matching $P_2''$ of $G_{\ell_p, T^{\circ}}$ such that $e_3 \notin P_2''$.

However, as illustrated in Figure \ref{doubleloopgraphminus}, neither of these pairs descend to good matchings of $G_{\gamma^{(pq)}, T}$. Moreover, we have 
\begin{align}
x(P_1')y(P_1')x(P_2')y(P_2')y_{\gamma} &= \displaystyle \left(\prod_{k=0}^t y_{\tau_{j_k}}\right)\left(\displaystyle \prod_{k=0}^s x_{\tau_{i_k}}\right)
\left( \prod_{k=0}^t x_{\tau_{j_k}}\right) \label{overcounte1} \\
x(P_1'')y(P_1'')x(P_2'')y(P_2'')y_{\gamma} &= \displaystyle \left(\prod_{k=0}^s y_{\tau_{i_k}}\right)\left(\displaystyle \prod_{k=0}^s x_{\tau_{i_k}}\right)
\left( \prod_{k=0}^t x_{\tau_{j_k}}\right) \label{overcounte2e3}
\end{align}

Finally, note that the collection of all good matchings of the form $\overline{(P_1,P_2)}$ does not consist of all good matchings of $G_{\gamma^{(pq)}, T}$. Indeed, the collection fails to include good matchings inducing a positive orientation on the diagonal of the first tile, or a negative orientation on the diagonal of the final tile. As illustrated in Figure \ref{doubleloopgraphplus}, if a good matching $P'$ induces a positive orientation on the diagonal of the first tile, then the zig-zag nature of the graph uniquely defines the matched edges up to tile $T_{\tau_{j_{t-1}}}$ (and the induced orientation of all diagonals up this tile are positive), moreover, due to the loop between the first tile and $T_{\tau_{i_s}}$ then $\tau_{j_t} \in T_{\tau_{i_s}}$ must also be in $P'$. Consequently, the zig-zag nature of the graph (along with the loop at tiles $T_{\tau_{i_1}}$ and $T_{\tau_{j_1}}$, and the loop at $T_{\tau_{j_t}}$ and the final tile) furthermore uniquely determines all remaining matched edges in $P_1$ (and, in turn, the induced orientation of all diagonals of tiles in $G_{\gamma^{(pq)}, T}$ are positive). In particular, we have

\begin{align}
x(P')y(P') = \displaystyle \left(\prod_{k=0}^s y_{\tau_{i_k}}\right)\left(\prod_{k=0}^t y_{\tau_{j_k}}\right)\left(\displaystyle \prod_{k=0}^s x_{\tau_{i_k}}\right)
\left( \prod_{k=0}^t x_{\tau_{j_k}}\right). \label{undercount1}
\end{align}

A similar analysis shows there is a unique good matching $P''$ of $G_{\gamma^{(pq)}, T}$ inducing a negative orientation on the diagonal of the final tile of $G_{\gamma^{(pq)}, T}$ (see Figure \ref{doubleloopgraphplus}), and this yields the equality
\begin{align}
x(P'')y(P'') = \displaystyle \left(\displaystyle \prod_{k=0}^s x_{\tau_{i_k}}\right)
\left( \prod_{k=0}^t x_{\tau_{j_k}}\right).  \label{undercount2}
\end{align}

In summary, equations (\ref{productofpmsums}), (\ref{overcounte1}), (\ref{overcounte2e3}), (\ref{undercount1}), (\ref{undercount2}) deduce equation (\ref{rephrasedprop4.21}). This completes the proof of Theorem \ref{loopexpansion} when $\gamma^{\circ} \in T$.

\end{proof}

\begin{figure}[H]
\begin{center}
\includegraphics[width=17cm]{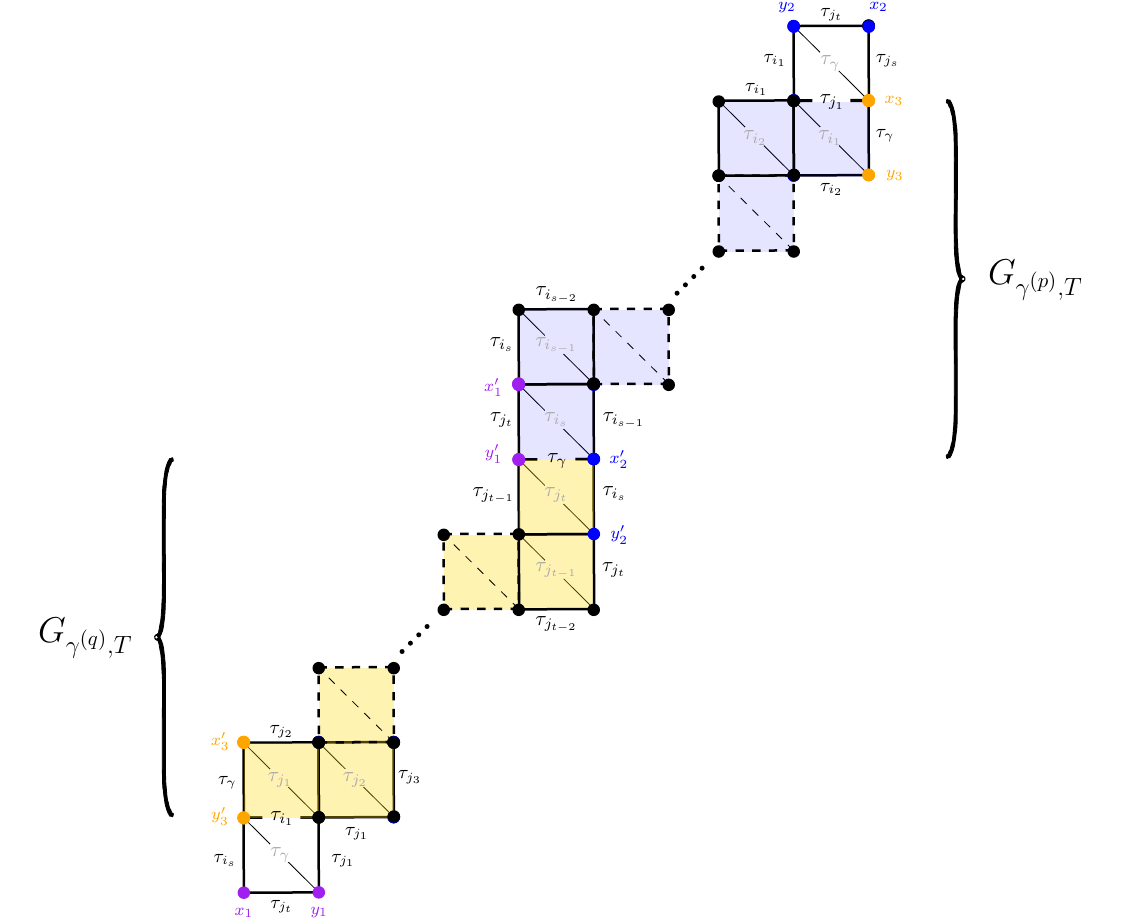}
\caption{We illustrate the associated loop graph $G_{\gamma^{(pq)}, T}$ of the ideal triangulation $T$ and doubly notched arc $\gamma^{(pq)}$ presented in Figure \ref{GenericDoubleUnderlyingSurface}. The shaded yellow tiles represent the sub snake graph of $G_{\gamma^{(pq)}, T}$ isomorphic to $G_{\gamma^{(q)}, T}$. Likewise, the shaded blue tiles highlight the sub snake graph of $G_{\gamma^{(pq)}, T}$ isomorphic to $G_{\gamma^{(p)}, T}$.}
\label{GenericDoubleUnderlyingGraph}
\end{center}
\end{figure}

\begin{figure}[H]
\begin{center}
\includegraphics[width=17cm]{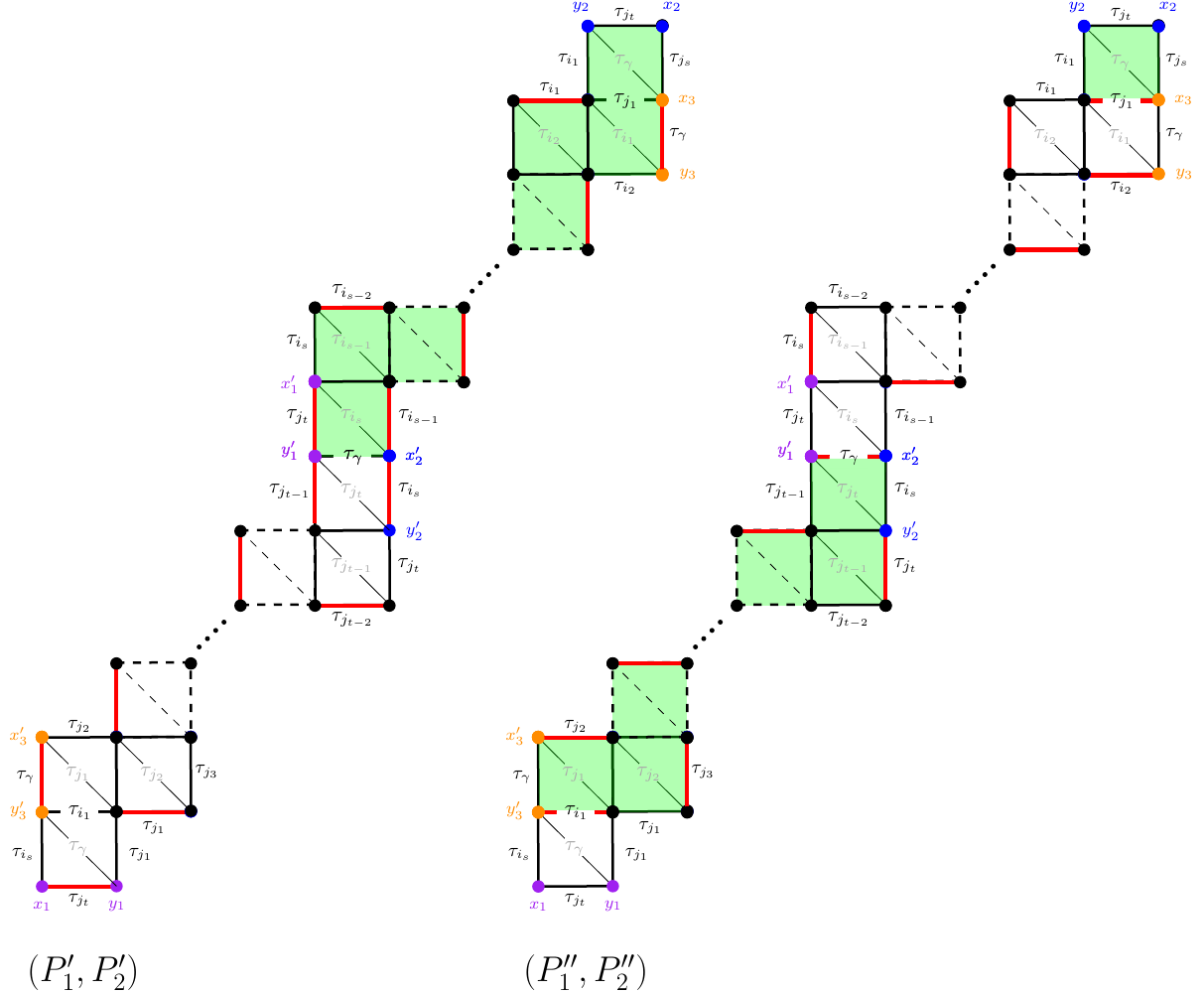}
\caption{As Proposition \ref{DoubleUnderlyingExpansion} alludes to (and the Proof of Theorem \ref{loopexpansion} when $\gamma^{\circ} \in T$ explains), in essence, the ``product'' of all perfect matchings of $G_{\ell_p, T^{\circ}}$ and $G_{\ell_q, T^{\circ}}$ overcount the expansion of $x_{\gamma^{(pq)}}$ by two terms, but also undercount it by missing two terms. This figure shows the two pairs of perfect matchings $(P_1',P_2')$ and $(P_1'',P_2'')$ that cause this ``overcounting''. One should further note that, as expected, these two pairs do not give rise to good matchings of the loop graph $G_{\gamma^{(pq)},T}$. }
\label{doubleloopgraphminus}
\end{center}
\end{figure}

%\frac{x_{\gamma^{(p)}}x_{\gamma^{(q)}}y_{\gamma}}{\text{cross}(T,\alpha) over counts
%

\begin{figure}[H]
\begin{center}
\includegraphics[width=17cm]{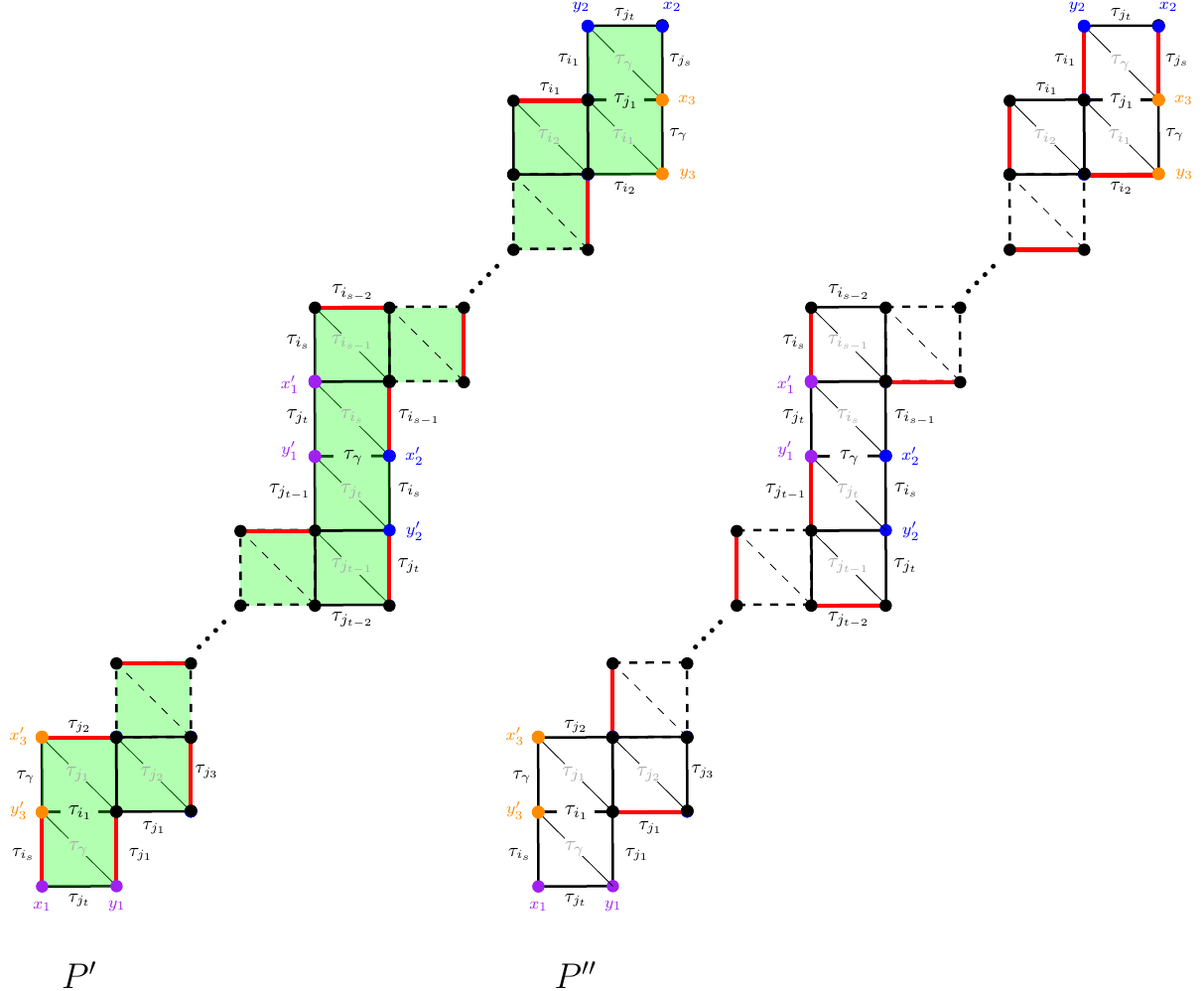}
\caption{As Proposition \ref{DoubleUnderlyingExpansion} alludes to (and the Proof of Theorem \ref{loopexpansion} when $\gamma^{\circ} \in T$ explains), in essence, the ``product'' of all perfect matchings of $G_{\ell_p, T^{\circ}}$ and $G_{\ell_q, T^{\circ}}$ overcount the expansion of $x_{\gamma^{(pq)}}$ by two terms, but also undercount it by missing two terms. This figure shows the two good matchings $P'$ and $P''$ that cause this ``undercounting''. Indeed, note that $P'$ and $P''$ do not arise from the ``product'' of perfect matchings of $G_{\ell_p, T^{\circ}}$ and $G_{\ell_q, T^{\circ}}$ as they possess right cuts on the first and last tiles of $G_{\gamma^{(pq)},T}$, respectively.}
\label{doubleloopgraphplus}
\end{center}
\end{figure}

\newpage

\section{The lattice structure of good matchings of loop graphs}
\label{latticesection}
In this section we discuss some properties of surface loop graphs. It builds on the work of Musiker, Schiffler and Williams established for snake graphs in [Section 5, \cite{musiker2013bases}]. We shall assume throughout that the first tile of any loop graph has relative orientation $1$.

\begin{defn}

Let $P$ be a good matching of a loop graph $\mathcal{G}^{\bowtie} = (G_1 \ldots, G_d)^{\bowtie}$. We say a good matching $P'$ of $\mathcal{G}^{\bowtie}$ is obtained from $P$ by a \textit{\textbf{positive twist}} at tile $G_j$ if one of the following holds:

\begin{itemize}

\item $P \setminus \{N(G_j),S(G_j)\} = P' \setminus \{W(G_j),E(G_j)\}$ and $j$ is odd, or

\item $P \setminus \{W(G_j),E(G_j)\} = P' \setminus \{N(G_j),S(G_j)\}$ and $j$ is even.

\end{itemize}

\end{defn}

\begin{defn}
\label{lattice of good matchings}
Let $\mathcal{G}^{\bowtie}$ be a surface loop graph. We define the \textbf{\textit{lattice of good matchings}} of $\mathcal{G}^{\bowtie}$ to be the lattice $L(\mathcal{G}^{\bowtie})$ whose underlying set is the collection of all good matchings of $G^{\bowtie}$. The lattice structure $(L(\mathcal{G}^{\bowtie}), \vee, \wedge)$ is given as follows:

\begin{itemize}

\item $(L(G^{\bowtie}),\leq)$ is viewed as a poset by demanding that for each $P_1, P_2 \in L(G^{\bowtie})$ we have $P_1 \leq P_2$ \textit{if and only if} $P_1 = P_2$, or $P_2$ is obtained from $P_1$ by a sequence of positive twists.

\item For any $P_1, P_2 \in L(G^{\bowtie})$ the join $P_1 \vee P_2 \in L(G^{\bowtie})$ is the (unique) good matching such that $$\text{for any $P_3 \in L(G^{\bowtie})$, if $P_1 \leq P_3$ and $P_2 \leq P_3$, then $P_1 \vee P_2 \leq P_3$.}$$

\item Likewise, for any $P_1, P_2 \in L(G^{\bowtie})$ the meet $P_1 \wedge P_2 \in L(G^{\bowtie})$ is the (unique) good matching such that $$\text{for any $P_3 \in L(G^{\bowtie})$, if $P_3 \leq P_1$ and $P_3 \leq P_2$, then $P_3 \leq P_1 \wedge P_2$.}$$

\end{itemize}

\begin{rmk}

A priori, it is not obvious the lattice structure defined above even makes sense. However, Theorem \ref{loop order lattice} below ensures Definition \ref{lattice of good matchings} is well defined, and further shows the lattice is \textit{distributive} -- see \cite{birkhoff1937rings} or [Section 3.4, \cite{stanley2011enumerative}].

\end{rmk}

\end{defn}

\begin{defn}
\label{snake quiver}
Let $\mathcal{G} = (G_1, \ldots, G_d)$ be a snake graph. We define $Q_{\mathcal{G}}$ to be the quiver with vertices $\{1, \ldots, d\}$ and whose arrows are determined by the following rules:

\begin{itemize}

\item there is an arrow $i \rightarrow i+1$ in $Q_{\mathcal{G}}$ \textit{if and only if} $i$ is odd and $G_{i+1}$ is on the right of $G_i$, or $i$ is even and $G_{i+1}$ is on top $G_i$,

\item there is an arrow $i \rightarrow i-1$ in $Q_{\mathcal{G}}$ \textit{if and only if} $i$ is odd and $G_{i}$ is on the right of $G_{i-1}$, or $i$ is even and $G_{i}$ is on top of $G_{i-1}$.

\end{itemize}

$Q_{\mathcal{G}}$ induces a poset structure on $\{1,\ldots, d\}$ by setting $i \leq j$ \textit{if and only if} $i = j$ or there is a sequence of arrows in $Q_{\mathcal{G}}$ oriented from $i$ to $j$. Namely, $Q_{\mathcal{G}}$ is the Hasse diagram of this poset.

\end{defn}

\begin{defn}

We say $I \subseteq Q$ is an \textit{\textbf{order ideal}} of a poset $(Q, \leq)$ if whenever $x \in I$ and $y \in Q$ satisfy $y \leq x$, then $y \in I$.

\end{defn}

\begin{defn}

Let $P$ be a good matching of a loop graph $\mathcal{G}^{\bowtie} = (G_1, \ldots, G_d)^{\bowtie}$. We define the \textit{\textbf{height}} to be the set $h(P) \subseteq \{1,\ldots, d\}$ where $ i \in h(P)$ \textit{if and only if} the diagonal of $G_i$ is positive with respect to $P$.

\end{defn}

\begin{thm}[Theorem 5.4, \cite{musiker2013bases}]
\label{order lattice}
Let $\mathcal{G}$ be a snake graph. Then $L(\mathcal{G})$ is isomorphic to the (distributive) lattice of order ideals $\mathcal{J}(Q_{\mathcal{G}})$ of the poset $Q_{\mathcal{G}}$. Specifically, this isomorphism is given by sending a perfect matching $P$ to its height $h(P)$.

\end{thm}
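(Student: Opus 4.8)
The plan is to prove the final statement, Theorem~\ref{order lattice} (attributed to Musiker–Schiffler–Williams), by constructing the claimed isomorphism $P \mapsto h(P)$ explicitly and verifying it respects the lattice structures on both sides. Since this result is stated as a citation, I would present the argument that establishes the bijection and order-preservation, rather than re-deriving the combinatorics of snake graphs from scratch. First I would fix a snake graph $\mathcal{G} = (G_1,\ldots,G_d)$ and recall that by Proposition~\ref{induceddef} every perfect matching $P$ induces an orientation on each diagonal, so the height $h(P) = \{i : \text{the diagonal of } G_i \text{ is positive}\}$ is well defined as a subset of $\{1,\ldots,d\}$.

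The heart of the argument is to show that $h$ is a bijection onto the order ideals $\mathcal{J}(Q_{\mathcal{G}})$. For injectivity, I would argue that a perfect matching of a snake graph is completely determined by the set of tiles whose diagonal is positive: knowing, for each tile, whether its diagonal points "up" or "down" pins down which of the two non-shared edges at each junction is used, and hence recovers $P$ uniquely. For the claim that $h(P)$ is always an order ideal of $Q_{\mathcal{G}}$, the key local observation is that the arrows of $Q_{\mathcal{G}}$ (Definition~\ref{snake quiver}) are arranged precisely so that a positive diagonal at $G_{i+1}$ forces a positive diagonal at $G_i$ whenever there is an arrow $i \to i+1$; this is a direct consequence of how the induced orientation propagates along the zig-zag/straight structure of the snake graph, combined with the parity conventions. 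Surjectivity would then follow by reversing this: given any order ideal $I$, I would build the matching tile-by-tile, orienting the diagonal of $G_i$ positively exactly when $i \in I$, and check that the order-ideal condition is exactly what guarantees the local edge choices are globally consistent (i.e., yield an actual perfect matching).

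Finally I would verify that $h$ is an isomorphism of lattices, i.e. that it intertwines the covering relation given by positive twists with the covering relation of $\mathcal{J}(Q_{\mathcal{G}})$ (adding a single minimal element to an order ideal). The computation to carry out is that a positive twist at tile $G_j$ (Definition of positive twist, with its odd/even parity cases) changes the orientation of exactly the diagonal of $G_j$ from negative to positive and leaves all other diagonals fixed; thus $h(P') = h(P) \cup \{j\}$ with $j$ covering $h(P)$ in the ideal lattice. Checking this requires tracking how swapping $\{N(G_j),S(G_j)\}$ for $\{W(G_j),E(G_j)\}$ alters the alternating path through $G_j$ and confirming it does not disturb neighbouring tiles' diagonals.

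The main obstacle I expect is the bookkeeping in the surjectivity/consistency step: showing that the order-ideal condition is not merely necessary but sufficient for the tile-by-tile reconstruction to close up into a genuine perfect matching. The parity conventions (odd versus even tiles, and the "on the right"/"on top" distinctions in Definition~\ref{snake quiver}) must be matched exactly against the "up"/"down" orientation and the $rel(G_j)$ sign conventions underlying positivity, and it is easy to introduce an off-by-one or orientation-flip error here. Since the statement is cited from \cite{musiker2013bases}, I would likely lean on that reference for the most delicate consistency check and emphasize the translation into the present paper's conventions, rather than reproving it in full detail.
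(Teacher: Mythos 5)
There is nothing in the paper to compare your proposal against: Theorem~\ref{order lattice} is imported verbatim from [Theorem~5.4, \cite{musiker2013bases}] and the paper supplies no proof of it whatsoever -- it is used as a black box in the proof of Theorem~\ref{loop order lattice}, where the loop-graph case is reduced to it. Judged on its own merits, your outline is the standard argument from the cited reference and is sound in structure: the height map $h$ is well defined by Proposition~\ref{induceddef}; a matching is recovered from its diagonal orientations (your single-junction reconstruction is correct, e.g.\ on one tile the matchings $\{N,S\}$ and $\{E,W\}$ induce opposite orientations on the diagonal, and this propagates through shared edges); the arrows of $Q_{\mathcal{G}}$ are calibrated so that a positive diagonal at the head of an arrow forces one at its tail, making $h(P)$ an order ideal; and a positive twist at $G_j$ changes only the diagonal of $G_j$, so covers in $L(\mathcal{G})$ match covers in $\mathcal{J}(Q_{\mathcal{G}})$ (one can check the sign conventions on a single tile with $rel(G_1)=1$: the twist $\{N,S\}\mapsto\{W,E\}$ at odd $j$ flips the diagonal from ``up''/negative to ``down''/positive, as you claim). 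The one genuine soft spot is the one you flag yourself: surjectivity, i.e.\ that every order ideal arises from a globally consistent matching, is stated as a plan (``build the matching tile-by-tile'') rather than carried out, and this is precisely where the parity bookkeeping lives; since the theorem is cited rather than proved in this paper, deferring that check to \cite{musiker2013bases} is a legitimate resolution, but as a standalone proof your text would need that induction along the tiles written out.
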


\begin{defn}
\label{WrittenHasseQuiverDefinition}

Let $\mathcal{G}^{\bowtie} = (G_1, \ldots, G_d)^{\bowtie}$ be a loop graph with underlying snake graph $\mathcal{G} = (G_1, \ldots, G_d)$. We define the quiver $Q_{\mathcal{G}^{\bowtie}}$ of $\mathcal{G}^{\bowtie}$ to be the quiver $Q_{\mathcal{G}}$ with an additional arrow for each loop or band of $\mathcal{G}^{\bowtie}$. Specifically, for $k_1,k_2 \in \{1, \ldots, d\}$ where $k_1 < k_2$ this arrow is determined by the following rule (see Figures \ref{HasseQuiverDefinition} and \ref{HasseQuivers}):

\begin{itemize}

\item if there is a loop with respect to $G_{k_1}$ and $G_{k_2}$ and $k_1$ is odd, then the arrow $k_1 \rightarrow k_2$ (resp. $k_2 \rightarrow k_1$) is in $Q_{\mathcal{G}^{\bowtie}}$ \textit{if and only if} $k_2$ is odd (resp. even) and $S(G_{k_2})$ is the cut edge, or $k_2$ is even (resp. odd) and $W(G_{k_2})$ is the cut edge. The definition is analogous if $k_1$ is even -- one should just interchange South and West.

\item if there is a loop with respect to $G_{k_2}$ and $G_{k_1}$ and $k_1$ is odd, then the arrow $k_2 \rightarrow k_1$ (resp. $k_1 \rightarrow k_2$) is in $Q_{\mathcal{G}^{\bowtie}}$ \textit{if and only if} $k_2$ is odd (resp. even) and $N(G_{k_1})$ is the cut edge, or $k_2$ is even (resp. odd) and $E(G_{k_1})$ is the cut edge. The definition is analogous if $k_1$ is even -- one should just interchange North and East.

\item if there is a band with respect to $G_{k_1}$ and $G_{k_2}$ and $k_1$ is odd, then the arrow $k_1 \rightarrow k_2$ (resp. $k_2 \rightarrow k_1$) is in $Q_{\mathcal{G}^{\bowtie}}$ \textit{if and only if} $k_2$ is odd (resp. even) and $N(G_{k_2})$ is the cut edge, or $k_2$ is even (resp. odd) and $E(G_{k_2})$ is the cut edge. The definition is analogous if $k_1$ is even -- one should just interchange North and East.

\end{itemize}

\end{defn}

\begin{figure}[H]
\begin{center}
\includegraphics[width=16cm]{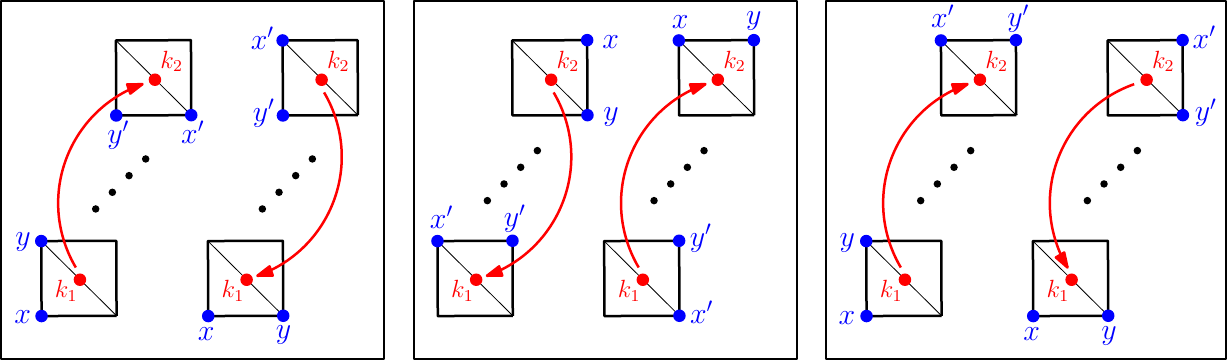}
\caption{In the case when $k_1$ and $k_2$ are both odd, we list the various types of additional arrow described in Definition \ref{WrittenHasseQuiverDefinition} with regards to a loop with respect to $G_{k_1}$ and $G_{k_2}$; a loop with respect to $G_{k_2}$ and $G_{k_1}$; and a band with respect to $G_{k_1}$ and $G_{k_2}$.}
\label{HasseQuiverDefinition}
\end{center}
\end{figure}

\begin{lem}
\label{posetlemma}
Let $\mathcal{G}^{\bowtie} = (G_1, \ldots, G_d)^{\bowtie}$ be a surface loop graph. Then $Q_{\mathcal{G}^{\bowtie}}$ defines a well-defined poset structure on $\{1,\ldots, d\}$ by setting $i \leq j$ \textit{if and only if} $i = j$ or there is a sequence of arrows in $Q_{\mathcal{G}^{\bowtie}}$ oriented from $i$ to $j$.

\end{lem}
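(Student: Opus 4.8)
The claim is that the quiver $Q_{\mathcal{G}^{\bowtie}}$, obtained from the snake-graph quiver $Q_{\mathcal{G}}$ by adjoining one arrow per loop, still induces a genuine partial order on $\{1,\ldots,d\}$ via the reachability relation. Since reflexivity and transitivity of ``there is a directed path from $i$ to $j$'' hold for any quiver, the only thing that can fail is antisymmetry: the relation is a poset precisely when $Q_{\mathcal{G}^{\bowtie}}$ is acyclic. So the plan is to reduce the lemma to the single statement that $Q_{\mathcal{G}^{\bowtie}}$ has no directed cycle, and then to verify acyclicity.

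First I would recall that $Q_{\mathcal{G}}$ itself is acyclic: by Definition \ref{snake quiver} it is the Hasse diagram of a poset on $\{1,\ldots,d\}$ (this is the content of Theorem \ref{order lattice}), and in fact each arrow of $Q_{\mathcal{G}}$ joins consecutive indices $i$ and $i\pm 1$, so $Q_{\mathcal{G}}$ is a disjoint orientation of a path and is manifestly acyclic. The passage to $Q_{\mathcal{G}^{\bowtie}}$ adds at most two further arrows, one for the loop at $G_1$ (connecting vertices $1$ and $k_1$) and one for the loop at $G_d$ (connecting vertices $d$ and $k_2$), where $k_1 \le k_2$ by Definition \ref{loopgraph}. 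Thus any directed cycle would have to use at least one of these two extra arrows, and I would argue that the orientations forced by the surface structure prevent this. The key point is that for a \emph{surface} loop graph the cut edges $c$ arise from gluing one end of the hooked-arc snake graph back onto a tile sitting at position $k_1$ (resp. $k_2$); the relative-orientation alternation $rel(G_i) = -rel(G_{i-1})$ established in Definition \ref{snakegraph}, together with the parity rules in the definition of $Q_{\mathcal{G}^{\bowtie}}$, pin down the direction of each loop arrow consistently with the linear order coming from $Q_{\mathcal{G}}$.

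The main obstacle will be ruling out a cycle formed by combining both loop arrows with a chain of snake arrows. Concretely, a cycle could in principle run from $1$ along the added arrow to $k_1$, then through path arrows to $d$, then along the second added arrow to $k_2$, and back; one must check the parity/cut-edge conditions make at least one of the four arrow directions incompatible with closing the loop. I would handle this by a careful case analysis on the parities of $k_1$, $k_2$, and $d$ and on which of $S/W$ (resp. $N/E$) is the cut edge, showing in every case that the two loop arrows point ``the same way'' relative to the path $1 < 2 < \cdots < d$ so that no alternating closed walk exists. Because for a surface loop graph $\mathcal{G}_{\gamma,T}$ the indices $k_1,k_2$ mark where the core subgraph $\mathcal{G}_{\gamma^{\circ},T} = (G_{k_1},\ldots,G_{k_2})$ sits inside the hooked snake graph, the relevant tiles are genuine ordinary tiles and the cut-edge data is determined by the zig-zag structure around the puncture; invoking this geometric input is what makes the parity bookkeeping close. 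Once acyclicity is established in all cases, antisymmetry follows, and hence $Q_{\mathcal{G}^{\bowtie}}$ defines a well-defined poset, completing the proof.
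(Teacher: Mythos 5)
Your reduction is correct and even makes explicit what the paper leaves implicit: reflexivity and transitivity are automatic for reachability, $Q_{\mathcal{G}}$ is an orientation of the path $1-2-\cdots-d$ and hence acyclic, so the lemma amounts to showing the one or two added loop arrows create no directed cycle. The gap is in how you propose to close this. Showing that ``the two loop arrows point the same way relative to $1<\cdots<d$'' does not establish acyclicity, because a directed cycle need not use both loop arrows: a single loop arrow, say $k\to 1$, closes a cycle as soon as there is a directed chain $1\to 2\to\cdots\to k$ in $Q_{\mathcal{G}}$, and by the parity rules of Definition \ref{snake quiver} such chains are exactly what zig-zag segments produce. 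This danger is real, not hypothetical, for surface loop graphs: the tiles coming from the hook wrapping around the puncture form a zig-zag (consecutive crossed arcs share the puncture as pivot vertex), so a long directed chain between the endpoints of a loop arrow genuinely exists on one side. Note also that the alternation $rel(G_i)=-rel(G_{i-1})$ you invoke as the surface-specific input holds for \emph{every} snake graph by Definition \ref{snakegraph}, so it cannot be what makes the bookkeeping close.

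The missing idea --- which is the entire content of the paper's short proof --- is a \emph{straightness} statement: because $\mathcal{G}^{\bowtie}$ is a surface loop graph, the triples of tiles at the gluing sites (in the paper's notation, $(G_{k-2},G_{k-1},G_k)$ and $(G_{d-1},G_d,G_k)$) are straight subgraphs. Under Definition \ref{snake quiver}, a straight triple forces its two arrows to alternate, pointing both towards or both away from the middle vertex, so that vertex is a barrier no directed path can cross; since every would-be cycle through a loop arrow must cross such a barrier, $Q_{\mathcal{G}^{\bowtie}}$ is acyclic. Your parity analysis of the cut edges determines only the direction of each loop arrow, which is half the needed data; without the straightness input the conclusion is simply false for abstract loop graphs. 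Concretely, take the zig-zag $(G_1,G_2,G_3)$ with $G_2$ to the right of $G_1$ and $G_3$ on top of $G_2$, and create a loop gluing $G_1$ to the boundary edge $W(G_3)$: the rules give arrows $1\to 2\to 3$ and, since $3$ is odd with cut edge $W(G_3)$, also $3\to 1$, a directed cycle. This is precisely why the lemma is restricted to \emph{surface} loop graphs, and any correct proof must locate and use that hypothesis where yours currently defers to unexecuted case-checking.
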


\begin{proof}

Suppose there is a loop with respect to $G_1$ and $G_k$ where $k \in \{3,\ldots, d\}$. Since $\mathcal{G}^{\bowtie}$ is a surface loop graph then $(G_{k-2},G_{k-1},G_k)$ and $(G_{2},G_{1},G_k)$ are straight subgraphs of $\mathcal{G}^{\bowtie}$ (due to the corresponding arcs $(\gamma_{k-2},\gamma_{k-1},\gamma_{k})$ and $(\gamma_{2},\gamma_{1},\gamma_{k})$ forming an ``s'' or ``z'' shape on the surface, rather than a fan). An analogous statement holds if there is a loop with respect to $G_d$ and $G_k$ where $k \in \{1,\ldots, d-2\}$, or a band with respect to $G_2$ and $G_{d-1}$. Consequently, the poset structure is well defined.

\end{proof}

\begin{figure}[H]
\begin{center}
\includegraphics[width=16.5cm]{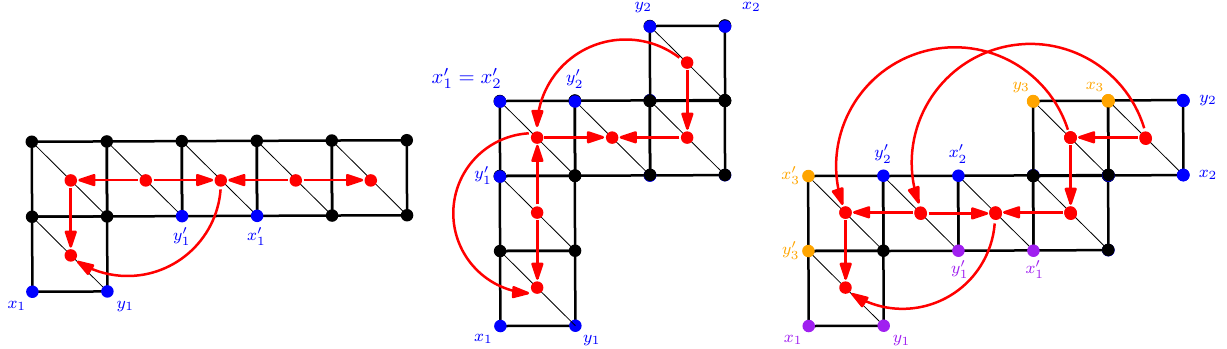}
\caption{The associated quivers, shown in red, of the loop graphs found in Figures \ref{single}, \ref{double} and \ref{DoubleUnderlying}.}
\label{HasseQuivers}
\end{center}
\end{figure}

We are now ready to state the main theorem of this section.

\begin{thm}
\label{loop order lattice}

Let $\mathcal{G}^{\bowtie}$ be a surface loop graph. Then $L(\mathcal{G}^{\bowtie})$ is isomorphic to the lattice of order ideals $\mathcal{J}(Q_{\mathcal{G}^{\bowtie}})$ of the poset $Q_{\mathcal{G}^{\bowtie}}$. Specifically, this isomorphism is given by sending a good matching $P$ to its height $h(P)$.

\end{thm}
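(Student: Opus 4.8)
The plan is to establish the isomorphism $L(\mathcal{G}^{\bowtie}) \cong \mathcal{J}(Q_{\mathcal{G}^{\bowtie}})$ by promoting Theorem \ref{order lattice} from the underlying snake graph to the loop graph, and then checking that the two extra features of the loop setting---the goodness constraint on matchings and the extra arrows in $Q_{\mathcal{G}^{\bowtie}}$---match up under the height map. First I would recall that every good matching $P$ of $\mathcal{G}^{\bowtie}$ extends uniquely to a perfect matching $\widetilde{P}$ of the underlying snake graph $\mathcal{G}$, and that positive twists are defined locally at a tile exactly as in the snake case; hence the height $h(P)$ agrees with $h(\widetilde{P})$ and the map $P \mapsto h(P)$ is a restriction of the snake-graph isomorphism of Theorem \ref{order lattice}. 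The crux is therefore to identify the image of the goodness condition inside $\mathcal{J}(Q_{\mathcal{G}})$ and show it coincides with $\mathcal{J}(Q_{\mathcal{G}^{\bowtie}})$.

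The key step is to translate the cut condition into a condition on heights. I would fix a single loop, say with respect to $G_1$ and $k$, and analyze the three possibilities (right, left, centre cut) in terms of whether the diagonal of $G_1$ and the diagonal of $G_k$ are positive---that is, in terms of whether $1 \in h(P)$ and whether $k \in h(P)$. The goodness requirement forbids precisely the configuration in which the two ends of the cut $c$ are matched on opposite sides; I would show that, after reading off the induced orientations via Proposition \ref{induceddef}, the forbidden configuration is exactly the one violating the order-ideal condition for the new arrow $1 \to k$ or $k \to 1$ prescribed in the definition of $Q_{\mathcal{G}^{\bowtie}}$. Concretely, the parity of $k$ together with whether $S(G_k)$ or $W(G_k)$ is the cut edge dictates the direction of the arrow, and one checks case by case that "$h(P)$ respects this arrow" is equivalent to "$P$ is a good matching at $c$". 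The surface hypothesis enters here exactly as in the preceding lemma: it guarantees $(G_{k-2},G_{k-1},G_k)$ and $(G_{d-1},G_d,G_k)$ are straight, which is what makes the diagonal orientations at the two endpoints of the cut rigidly linked and hence makes the arrow well placed.

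I would then argue that a height set $H \subseteq \{1,\ldots,d\}$ is an order ideal of $Q_{\mathcal{G}^{\bowtie}}$ if and only if it is an order ideal of $Q_{\mathcal{G}}$ satisfying the additional constraint from each loop arrow. Combined with Theorem \ref{order lattice}, which says order ideals of $Q_{\mathcal{G}}$ biject with perfect matchings of $\mathcal{G}$ via the height, this shows that order ideals of $Q_{\mathcal{G}^{\bowtie}}$ biject precisely with those perfect matchings of $\mathcal{G}$ whose restriction is a good matching of $\mathcal{G}^{\bowtie}$. Since the lattice operations (meet and join of order ideals, equivalently the partial order generated by positive twists) are inherited from the snake-graph lattice and the good matchings form a sub-poset closed under these operations once the loop constraints hold, the bijection upgrades to a lattice isomorphism. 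When both ends are looped, the two loop constraints are independent (the arrows touch disjoint pairs of tiles, by $k_1 \le k_2$ and straightness), so the argument applies simultaneously to both.

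The main obstacle I anticipate is the careful case analysis linking cut type to the arrow direction: one must correctly track, through the alternating-path orientation of Proposition \ref{induceddef}, how a right/left/centre cut at $c$ forces the diagonals of $G_1$ and $G_k$ to be positive or negative, and verify that the parity conventions in the definition of $Q_{\mathcal{G}^{\bowtie}}$ reproduce exactly the order-ideal inequality $1 \le k$ or $k \le 1$. Getting the four parity/cut-edge subcases to align without sign errors is the delicate part; everything else follows formally from Theorem \ref{order lattice} and the uniqueness of the extension of a good matching to the underlying snake graph.
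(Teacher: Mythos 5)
Your proposal is correct and takes essentially the same route as the paper: both reduce to Theorem \ref{order lattice} for the underlying snake graph and translate the goodness condition at each cut into the height constraint that $k \in h(P)$ forces $1 \in h(P)$ (positive diagonal at $G_k$ forces a positive diagonal at $G_1$), which is precisely the order-ideal condition for the extra arrow of $Q_{\mathcal{G}^{\bowtie}}$. The paper likewise treats a single loop explicitly, invoking the surface hypothesis to normalise the local picture near the cut (zig-zag at $G_k$), and handles the doubly notched case by the same argument.
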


\begin{proof}

Let us consider the case when $\mathcal{G}^{\bowtie}$ arises from a singly notched arc. Moreover, let $\mathcal{G} = (G_1,\ldots, G_k, \ldots, G_d)$ be a snake graph which gives rise to $\mathcal{G}^{\bowtie}$ by creating a loop with respect to $G_1$ and $G_k$. Without loss of generality, we may assume that $G_2$ is on the right of the tile $G_1$, and that $(G_{k-1},G_k,G_{k+1})$ is a zig-zag (we allow the possibility that $G_{k+1} = \emptyset$) -- see Figure \ref{latticesetup}. \newline \indent 

If $\overline{P}$ is a perfect matching of $\mathcal{G}$ that induces a negative diagonal on tile $G_k$ then the edge matching $x'$ and $y'$ is in $\overline{P}$. So $\overline{P}$ descends to a good matching on $\mathcal{G}^{\bowtie}$. \newline \indent Conversely, If $\overline{P}$ is a perfect matching of $\mathcal{G}$ that induces a positive diagonal on tile $G_k$ then: $N(G_k) \in \overline{P}$ or $E(G_k) \in \overline{P}$, respective of whether $k$ is even or odd. So $\overline{P}$ descends to a good matching of $\mathcal{G}^{\bowtie}$ \textit{if and only if} the edge matching $x$ and $y$ is in $\overline{P}$. Consequently, the diagonal of $G_1$ must also be positive. \newline \indent

Therefore, by Theorem \ref{order lattice}, the collection of good matchings of $\mathcal{G}^{\bowtie}$ may be identified with the order ideals of $Q_{\mathcal{G}}$ which include the vertex $1$ whenever they contain the vertex $k$. This subcollection of order ideals is precisely the order ideals of $Q_{\mathcal{G}^{\bowtie}}$, which completes the proof when $\mathcal{G}^{\bowtie}$ arises from a singly-notched arc. The doubly-notched case follows in exactly the same way.

\end{proof}

\begin{figure}[H]
\begin{center}
\includegraphics[width=14cm]{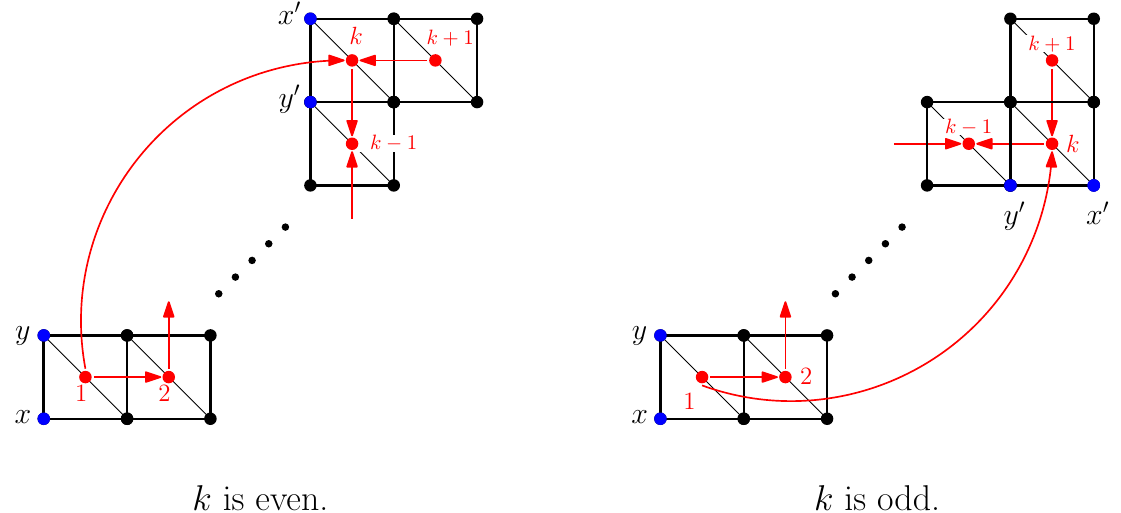}
\caption{The setup of the loop graph $\mathcal{G}^{\bowtie}$ discussed in the proof of Theorem \ref{loop order lattice}. The red quiver indicates (part of) the associated quiver $Q_{\mathcal{G}^{\bowtie}}$.}
\label{latticesetup}
\end{center}
\end{figure}

\begin{cor}
\label{minmaxmatchings}
Let $\mathcal{G}^{\bowtie}$ be a surface loop graph. Then there exist (unique) good matchings $P_-$ and $P_+$ of $\mathcal{G}^{\bowtie}$ for which the diagonals of $\mathcal{G}^{\bowtie}$ are all negative and all positive, respectively. We call these the \textbf{\textit{minimal}} and \textbf{\textit{maximal}} good matchings of $\mathcal{G}^{\bowtie}$.

\end{cor}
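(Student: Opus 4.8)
The plan is to read off the corollary as an immediate consequence of the lattice isomorphism established in Theorem \ref{loop order lattice}, exploiting the general fact that the lattice of order ideals of any finite poset has a canonical minimum and maximum. First I would recall that for any finite poset $Q$, the empty set $\emptyset$ and the whole poset $Q$ are both order ideals, and they are respectively the unique minimum and unique maximum of $\mathcal{J}(Q)$ under inclusion. Applying this to $Q = Q_{\mathcal{G}^{\bowtie}}$ singles out the two distinguished order ideals $\emptyset$ and $\{1,\ldots,d\}$ inside $\mathcal{J}(Q_{\mathcal{G}^{\bowtie}})$.

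Next I would transport these two order ideals back to good matchings through the bijection $P \mapsto h(P)$ furnished by Theorem \ref{loop order lattice}. Since that map is a bijection from good matchings of $\mathcal{G}^{\bowtie}$ onto $\mathcal{J}(Q_{\mathcal{G}^{\bowtie}})$, there is a unique good matching $P_-$ with $h(P_-) = \emptyset$ and a unique good matching $P_+$ with $h(P_+) = \{1,\ldots,d\}$. By the definition of height, $i \in h(P)$ holds precisely when the diagonal of $G_i$ is positive with respect to $P$; hence $h(P_-) = \emptyset$ is exactly the statement that every diagonal of $\mathcal{G}^{\bowtie}$ is negative with respect to $P_-$, and $h(P_+) = \{1,\ldots,d\}$ is exactly the statement that every diagonal is positive with respect to $P_+$. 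Uniqueness is inherited directly from injectivity of the height map, and order-preservation of the isomorphism justifies the names \emph{minimal} and \emph{maximal}.

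I do not expect any genuine obstacle here, as the result is a formal corollary of the preceding theorem; the only points warranting a sentence of care are the verifications that $\emptyset$ and the full poset indeed belong to $\mathcal{J}(Q_{\mathcal{G}^{\bowtie}})$ (immediate from the definition of an order ideal) and that the dichotomy positive/negative for diagonals matches the containment/non-containment in $h(P)$ (immediate from the definition of height). I would therefore present the proof as a two- or three-sentence deduction, invoking Theorem \ref{loop order lattice} to reduce the existence and uniqueness of $P_-$ and $P_+$ to the existence and uniqueness of the bottom and top elements of a finite distributive lattice of order ideals.
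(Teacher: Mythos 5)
Your proposal is correct and follows exactly the paper's own argument: the paper likewise observes that $\emptyset$ and $\{1,\ldots,d\}$ are order ideals of $Q_{\mathcal{G}^{\bowtie}}$ and invokes Theorem \ref{loop order lattice} to obtain $P_-$ and $P_+$ via the height map. Your version merely makes explicit the uniqueness-from-injectivity step that the paper leaves implicit, which is a fine addition but not a different route.
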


\begin{proof}

Note that $\emptyset$ and $\{1,\ldots, d\}$ are order ideals of $Q_{\mathcal{G}^{\bowtie}} = (G_1, \ldots, G_d)^{\bowtie}$. By Theorem \ref{loop order lattice} there exist good matchings $P_-$ and $P_+$ of $\mathcal{G}^{\bowtie}$ such that $h(P_-) = \emptyset$ and $h(P_+) = \{1,\ldots, d\}$.

\end{proof}

\begin{rmk}
\label{repremark}

Let $\mathcal{G}^{\bowtie}$ be a loop graph. One can associate a quiver representation $M(I)$ of $Q_{\mathcal{G}^{\bowtie}}$ to each order ideal $I$ of $Q_{\mathcal{G}^{\bowtie}}$. Specifically, $$M(i) = \mathbb{C} \hspace{4mm} \textit{if and only if} \hspace{4mm} i \notin I$$ and for each arrow $\alpha: i \rightarrow j$ in $Q_{\mathcal{G}^{\bowtie}}$ we have $$M(\alpha) = Id, \hspace{2mm} \text{if} \hspace{2mm} i,j \notin I \hspace{6mm} \text{and} \hspace{6mm} M(\alpha) = 0, \hspace{3mm} \text{otherwise}.$$ Adopting the terminology used in \cite{canakci2021lattice}, we see from Theorem \ref{loop order lattice} that $L(\mathcal{G}^{\bowtie})$ is isomorphic to the \textit{canonical} submodule lattice of $M(\emptyset)$, with respect to $Q_{\mathcal{G}^{\bowtie}}$, for any surface loop graph $\mathcal{G}^{\bowtie}$. Note that our situation is dual to that presented in \cite{canakci2021lattice}, so the ordering on the canonical submodule lattice is defined by $M(I) \leq M(J)$ \textit{if and only if} $M(J)$ is a submodule of $M(I)$.
\end{rmk}

In Remark \ref{mainremark} we spoke about the so called poset version of equation (\ref{loopexpansionformula}). We conclude the paper by describing this alternative version, as found in \cite{huang2021new}, \cite{ouguz2025cluster},\cite{pilaud2023posets},\cite{weng2023f}, and demonstrate that it naturally follows from equation (\ref{loopexpansionformula}) and the results of this section.

\begin{defn}
\label{posetweight}
Let $T^{\circ} = \{\tau_1, \ldots, \tau_n\}$ be an ideal triangulation with associated tagged triangulation $T := \iota(T)$. Moreover, given a directed tagged arc $\gamma$, consider the poset $P_{G_{\gamma,T}}$ (recall that each vertex of this poset is labelled by an arc $\tau \in T^{\circ}$, and that multiple vertices may share the same label). \newline
Then for each order ideal $I$ of $Q_{G_{\gamma,T}}$, following the conventions outlined in sections \ref{preliminariessection} and \ref{loopgraphsection}, we define the \textit{\textbf{poset weight monomial}} $w(I)$ as follows:

\[
w(I) := \displaystyle \prod_{\tau \in I} w_{\tau} \hspace{10mm} \text{where} \hspace{10mm} w_{\tau} := 
\begin{cases} 
      \frac{\hat{y}_{\tau}}{\hat{y}_{{\tau}^{(p)}}} & \text{if $\tau_{j}$ is a radius, }\\ 
          \hat{y}_{\tau} & \text{otherwise.}
\end{cases}
\]

where $\tau^{(p)} \in T$ is obtained from the plain (radius) arc $\tau \in T^{\circ}$ by adding a notch at the associated puncture $p$.

\end{defn}

\begin{cor}
\label{maincor}
Let $(S,M)$ be a bordered surface, and let $T^{\circ}$ be an ideal triangulation with corresponding tagged triangulation $T= \iota(T^\circ)$. We define $\mathcal{A}$ to be the associated cluster algebra with principal coefficients with respect to $\Sigma_T = (\mathbf{x}_T,\mathbf{y}_T,B_T)$. \newline Let $\gamma$ be a tagged arc of $(S,M)$ (as in Theorem \ref{loopexpansion}, if $\gamma$ is doubly notched then we also assume $(S,M)$ is not twice-punctured and closed), and let $\mathbf{g(\gamma},T) = (g_1,g_2, \ldots, g_n)$ be the g-vector of $x_{\gamma}$ with respect to $T$. Then the Laurent expansion of $x_{\gamma} \in \mathcal{A}$ with respect to $\Sigma_T$ is given by:

\begin{equation}
\label{posetexpansionformula}
x_{\gamma} = {\mathbf{x}_T}^{\mathbf{g(\alpha},T)}\sum_{I} w(I),
\end{equation}

\noindent where the sum is taken over all order ideals $I$ of the poset $Q_{G_{\gamma,T}}$ and ${\mathbf{x}_T}^{\mathbf{g(\gamma},T)} := x_1^{g_1} x_2^{g_2} \ldots x_n^{g_n}$.

\end{cor}

\begin{proof}

Note that, by Theorem \ref{loopexpansion}, Corollary \ref{minmaxmatchings}, and the definition of $g$-vector, we have $${\mathbf{x}_T}^{\mathbf{g(\alpha},T)} =  \frac{x(P_{-})}{\text{cross}(\gamma, T)}$$ where $P_{-}$ is the minimal good matching of the loop graph $G_{\gamma, T}$.

Furthermore, following \cite{fomin2007cluster}, recall that the so called \textit{\textbf{F-polynomial}}, $F_{\gamma,T}(y_1,\ldots, y_n) \in \mathbb{Z}[y_1,\ldots, y_n]$, is defined to be cluster variable $x_{\gamma,T}$ specialised at $x_1 = \ldots = x_n = 1$. Therefore, by Theorem \ref{loopexpansion}, $F_{\gamma,T}(y_1,\ldots,y_n) = \displaystyle \sum_{P} y(P)$ where the sum is over all good matchings $P$ of the loop graph $G_{\gamma,T}$.

Finally, coupling this with Theorem \ref{loop order lattice}, Definitions \ref{goodmatchingcoefficientmonomial} and \ref{posetweight}, and the refined separation of addition formulae of Fomin and Zelevinsky [Corollary 6.3, \cite{fomin2007cluster}] we thus obtain $$\displaystyle x_{\gamma,T} \ = \ {\mathbf{x}_T}^{\mathbf{g(\alpha},T)}F(\hat{y}_1,\ldots,\hat{y}_n)  \ = \ {\mathbf{x}_T}^{\mathbf{g(\alpha},T)}{\left(\sum_{P} y(P)\right)}_{\vert_{y_i \rightarrow \hat{y}_i}} =  \ \ {\mathbf{x}_T}^{\mathbf{g(\alpha},T)}\sum_{I} w(I).$$

\noindent where the final sum is taken over all order ideals $I$ of the poset $Q_{G_{\gamma,T}}$.

\end{proof}

\begin{rmk}

The above Corollary \ref{maincor} continues to be true even when $(S,M)$ is a twice-punctured closed bordered surface and $\gamma$ is doubly notched. Indeed, as explained in Remark \ref{mainremark}, our Main Theorem \ref{loopexpansion} also holds in this setting, so the same proof of the corollary applies.

\end{rmk}

\newpage

\bibliography{bases}

@article{banaian2024skein,
  title={Skein relations for punctured surfaces},
  author={Banaian, Esther and Kang, Wonwoo and Kelley, Elizabeth},
  journal={arXiv preprint arXiv:2409.04957},
  year={2024}
}

@article{birkhoff1937rings,
  title={Rings of sets},
  author={Birkhoff, Garrett},
  journal={Duke Mathematical Journal},
  volume={3},
  number={3},
  pages={443--454},
  year={1937},
  publisher={Duke University Press}
}

@article{canakci2013snake,
  title={Snake graph calculus and cluster algebras from surfaces},
  author={Canak{\c{c}}{\i}, I. and Schiffler, R.},
  journal={Journal of Algebra},
  volume={382},
  pages={240--281},
  year={2013},
  publisher={Elsevier}
}

@article{canakci2015snake,
  title={Snake graph calculus and cluster algebras from surfaces {II}: self-crossing snake graphs},
  author={Canak{\c{c}}{\i}, I. and Schiffler, R.},
  journal={Mathematische Zeitschrift},
  volume={281},
  number={1-2},
  pages={55--102},
  year={2015},
  publisher={Springer}
}

@article{canakci2021lattice,
  title={Lattice bijections for string modules, snake graphs and the weak Bruhat order},
  author={Canak{\c{c}}{\i}, I. and Schroll, S.},
  journal={Advances in Applied Mathematics},
  volume={126},
  pages={102094},
  year={2021},
  publisher={Elsevier}
}

@article{fomin2008cluster,
  title={Cluster algebras and triangulated surfaces. Part {I}: Cluster complexes},
  author={Fomin, S. and Shapiro, M. and Thurston, D.},
  journal={Acta Mathematica},
  volume={201},
  number={1},
  pages={83--146},
  year={2008},
  publisher={Springer}
}

@article{fomin2018cluster,
  title={Cluster algebras and triangulated surfaces part {II}: Lambda lengths},
  author={Fomin, S. and Thurston, D.},
  journal={Memoirs of the American Mathematical Society},
  volume={255},
  number={1223},
  year={2018}
}

@article{fomin2002cluster,
  title={Cluster algebras {I}: foundations},
  author={Fomin, S. and Zelevinsky, A.},
  journal={Journal of the American Mathematical Society},
  volume={15},
  number={2},
  pages={497--529},
  year={2002}
}

@article{fomin2007cluster,
  title={Cluster algebras IV: coefficients},
  author={Fomin, S. and Zelevinsky, A.},
  journal={Compositio Mathematica},
  volume={143},
  number={1},
  pages={112--164},
  year={2007},
  publisher={London Mathematical Society}
}

@article{gross2018canonical,
  title={Canonical bases for cluster algebras},
  author={Gross, M. and Hacking, P. and Keel, S. and Kontsevich, M.},
  journal={Journal of the American Mathematical Society},
  volume={31},
  number={2},
  pages={497--608},
  year={2018}
}

@article{huang2021new,
  title={New expansion formulas for cluster algebras from surfaces},
  author={Huang, Min},
  journal={Journal of Algebra},
  volume={588},
  pages={538--573},
  year={2021},
  publisher={Elsevier}
}

@article{labardini2010quivers,
  title={Quivers with potentials associated with triangulations of {R}iemann surfaces},
  author={Labardini-Fragoso, D.},
  journal={Ph.D. thesis, Northeastern University},
  year={2010}
}

@article{lee2015positivity,
  title={Positivity for cluster algebras},
  author={Lee, K. and Schiffler, R.},
  journal={Annals of Mathematics},
  pages={73--125},
  year={2015},
  publisher={JSTOR}
}

@article{musiker2010cluster,
  title={Cluster expansion formulas and perfect matchings},
  author={Musiker, G. and Schiffler, R.},
  journal={Journal of Algebraic Combinatorics},
  volume={32},
  number={2},
  pages={187--209},
  year={2010},
  publisher={Springer}
}

@article{musiker2011positivity,
  title={Positivity for cluster algebras from surfaces},
  author={Musiker, G. and Schiffler, R. and Williams, L.},
  journal={Advances in Mathematics},
  volume={227},
  number={6},
  pages={2241--2308},
  year={2011},
  publisher={Elsevier}
}

@article{musiker2013bases,
  title={Bases for cluster algebras from surfaces},
  author={Musiker, G. and Schiffler, R. and Williams, L.},
  journal={Compositio Mathematica},
  volume={149},
  number={2},
  pages={217--263},
  year={2013},
  publisher={London Mathematical Society}
}

@article{musiker2013matrix,
  title={Matrix formulae and skein relations for cluster algebras from surfaces},
  author={Musiker, Gregg and Williams, Lauren},
  journal={International Mathematics Research Notices},
  volume={2013},
  number={13},
  pages={2891--2944},
  year={2013},
  publisher={OUP}
}

@article{ouguz2025cluster,
  title={Cluster expansions: T-walks, labeled posets and matrix calculations},
  author={O{\u{g}}uz, Ezgi Kantarc{\i} and Y{\i}ld{\i}r{\i}m, Emine},
  journal={Journal of Algebra},
  volume={669},
  pages={183--219},
  year={2025},
  publisher={Elsevier}
}

@article{pilaud2023posets,
  title={Posets for $ F $-polynomials in cluster algebras from surfaces},
  author={Pilaud, Vincent and Reading, Nathan and Schroll, Sibylle},
  journal={arXiv preprint arXiv:2311.06033},
  year={2023}
}

@article{schiffler2010cluster,
  title={On cluster algebras arising from unpunctured surfaces {II}},
  author={Schiffler, R.},
  journal={Advances in Mathematics},
  volume={223},
  number={6},
  pages={1885--1923},
  year={2010},
  publisher={Elsevier}
}

@article{stanley2011enumerative,
  title={Enumerative Combinatorics Volume 1 second edition},
  author={Stanley, Richard P},
  journal={Cambridge studies in advanced mathematics},
  year={2011}
}

@article{tsironis2025skein,
  title={Skein relations on punctured surfaces},
  author={Tsironis, Michael},
  journal={arXiv preprint arXiv:2512.24447},
  year={2025}
}

@article{wilson2019positivity,
  title={Positivity for quasi-cluster algebras},
  author={Wilson, J.},
  journal={arXiv preprint arXiv:1912.12789},
  year={2019}
}

@article{geiss2023bangle,
  title={Bangle functions are the generic basis for cluster algebras from punctured surfaces with boundary},
  author={Gei{\ss}, Christof and Labardini-Fragoso, Daniel and Wilson, Jon},
  journal={arXiv preprint arXiv:2310.03306},
  year={2023}
}

@article{weng2023f,
  title={F-polynomials of Donaldson-Thomas transformations},
  author={Weng, Daping},
  journal={arXiv preprint arXiv:2303.03466},
  year={2023}
}
\bibliographystyle{plain}

\Addresses

\end{document}